\documentclass[]{article}

\title{Logarithmic corrections to the Alexander--Orbach conjecture for the four-dimensional uniform spanning tree}
\author{Noah Halberstam \and Tom Hutchcroft}

\usepackage{amsfonts,amsmath,amssymb,amsthm}
\usepackage[utf8]{inputenc}
\usepackage[english]{babel}
\usepackage{bbm}
\usepackage{enumitem}
\usepackage{color}
\usepackage{comment}
\usepackage[margin=1.0in]{geometry}
\usepackage{setspace}
\usepackage{scalerel}
\usepackage{mathrsfs}
\usepackage{multicol}
\usepackage{hyperref}
\usepackage{cite}

\setstretch{1.2}

\newtheorem{theorem}{Theorem}

\newtheorem{proposition}[theorem]{Proposition}

\newtheorem{lemma}[theorem]{Lemma}
\theoremstyle{remark}
\newtheorem{remark}{Remark}

\DeclareFontFamily{U}{mathb}{\hyphenchar\font45}
\DeclareFontShape{U}{mathb}{m}{n}{
	<5> <6> <7> <8> <9> <10> gen * mathb
	<10.95> mathb10 <12> <14.4> <17.28> <20.74> <24.88> mathb12
}{}
\DeclareSymbolFont{mathb}{U}{mathb}{m}{n}
\DeclareMathSymbol{\llcurly}{3}{mathb}{"CE}

\numberwithin{theorem}{section}

\DeclareMathOperator{\Z}{\mathbb{Z}}

\DeclareMathOperator{\N}{\mathbb{N}}

\DeclareMathOperator{\Rext}{\operatorname{rad}_{\mathrm{ext}}}

\DeclareMathOperator*{\argmax}{arg\,max}

\newcommand{\E}[1]{\mathbb{E}\left[#1\right]}

\DeclareMathOperator{\pr}{\mathbb{P}}
\DeclareMathOperator{\pb}{\mathbf{P}}

\newcommand{\floor}[1]{\lfloor#1\rfloor}
\newcommand{\ceil}[1]{\lceil#1\rceil}

\newcommand{\abs}[1]{\lvert#1\rvert}

\newcommand{\norm}[1]{\left\lVert#1\right\rVert}

\newcommand{\fB}{\mathfrak{B}}
\newcommand{\fT}{\mathfrak{T}}
\newcommand{\fP}{\mathfrak{P}}
\newcommand{\bP}{\mathbf{P}}
\newcommand{\bE}{\mathbf{E}}
\newcommand{\bO}{\mathbf{O}}
\newcommand{\bo}{\mathbf{o}}
\newcommand{\bOmega}{\mathbf{\Omega}}

\newcommand{\bTheta}{\mathbf{\Theta}}
\newcommand{\eps}{\varepsilon}
\renewcommand{\P}{\mathbb{P}}

\newcommand{\LE}{\mathrm{LE}}

\begin{document}

\maketitle

\begin{abstract}
	We compute the precise logarithmic corrections to Alexander--Orbach behaviour for various quantities describing the geometric and spectral properties of the four-dimensional uniform spanning tree. In particular, we prove 
   that the volume of an intrinsic $n$-ball in the tree is $n^2 (\log n)^{-1/3+o(1)}$, that the typical intrinsic displacement of an $n$-step random walk is $n^{1/3} (\log n)^{1/9-o(1)}$, and that the $n$-step return probability of the walk decays as $n^{-2/3}(\log n)^{1/9-o(1)}$. 
\end{abstract}



\section{Introduction}
The behaviour of random walks on random fractals has been the subject of intense
 study since the 1970s~\cite{de1976percolation}, and a sophisticated and widely applicable theory  has now developed on the topic \cite{BJKS08,KumagaiBook,kestensubdiff}.
   In particular, it is now well established that the asymptotic behaviour of 
    spectral quantities such as exit times, return probabilities, and walk displacement are determined under mild conditions by geometric properties such as volume growth and resistance growth
       \cite{MR1881970,KumagaiBook}, with very general results to this effect established in the recent work of Lee \cite{lee2020relations}.
This theory has led to a fairly complete understanding of several notable motivating examples including random planar maps \cite{MR4255139,MR4146545,MR3010812,curien2020geometric}, high-dimensional percolation and branching random walks \cite{MR2551766,BJKS08,MR3947331}, and uniform spanning trees in two  dimensions \cite{barlow2014subsequential}, three dimensions \cite{MR4348685},  and  high dimensions ($d>4$) \cite{MR4055195}. The analysis of other important examples such as two-dimensional critical percolation remain largely open despite significant partial progress \cite{Kesten86,kestensubdiff,MR4369717}.

As suggested by this list of examples, many of the most interesting random fractals arise from critical statistical mechanics models, and for many such models the geometric and spectral properties of the  associated random fractal depends heavily on the dimension in which the model is considered.
Indeed, for many random fractals arising in statistical mechanics,
  a dichotomy emerges around an \textit{upper-critical dimension}~\cite{10.1093/acprof:oso/9780198509233.001.0001}, denoted $d_c$, which is equal to $4$ for the uniform spanning tree and $6$ for percolation:  below this dimension, the behaviour of the fractal is highly dependent on the geometry of the underlying space, while above this dimension  the fractal displays \emph{mean-field behaviour}, meaning that its large-scale behaviour is the same as it would be in a `geometrically trivial' setting such as the complete graph or the binary tree. For many models the mean-field regime is described by \emph{Alexander-Orbach} behaviour \cite{alexander1982density,MR2247823,kestensubdiff}, in which the relevant random fractal has quadratic volume growth, spectral dimension $4/3$, and typical $n$-step walk displacement of order~$n^{1/3}$. Indeed, Alexander-Orbach behaviour has been proven to hold for high-dimensional \emph{oriented} percolation by Barlow, Jarai, Kumagai, and Slade \cite{BJKS08}, high-dimensional percolation by Kozma and Nachmias \cite{MR2551766}, and for the high-dimensional uniform spanning tree by the second author \cite{MR4055195}. (An interesting example that is \emph{not} expected to exhibit Alexander-Orbach behaviour in high dimensions is the minimal spanning forest, mean-field models of which have cubic volume growth and spectral dimension $3/2$ \cite{addario2013local,nachmias2022wired}.)

    At the upper-critical dimension itself ($d=d_c$), it is expected that mean-field behaviour \emph{almost} holds, with many quantities of interest expected to exhibit a polylogarithmic correction to their mean-field scaling. It is this regime that provides the focus of this paper, in which we determine the precise order of the polylogarithmic corrections to scaling for the geometric and spectral properties of the uniform spanning tree (UST) at its upper-critical dimension $d_c=4$. 
    The particular polylogarithmic corrections we compute are those governing the volume of balls, the resistance across them, and the return probabilities, range, displacement 
     and exit times of random walks on the tree. Most of our  work goes into estimating the volume growth and resistance growth of the 4d UST, with the associated random walk estimates following straightforwardly by techniques developed in \cite{BJKS08,kumagai2008heat} that are by now rather standard. (The relevant proofs are presented in a self-contained way in Section~\ref{subsec:Kumagai_Misumi}.)
 We believe that this is the first time that polylogarithmic corrections to Alexander-Orbach behaviour have been computed for the random walk on a random fractal at the upper-critical dimension. Following \cite{MR2925573}, which computes the exact polylogarithmic corrections to a random walk on the four-dimensional random walk trace, we also believe that our work is the second
   time such polylogarithmic corrections to random walk behaviour at the upper critical dimension have been computed for any model. Partial progress on this problem for other models includes \cite{MR4452652} (see also \cite{MR3231996}) in which the existence of a non-trivial polylogarithmic correction to resistance growth is established for oriented branching random walk in $\Z^6\times\Z_+$.

\medskip

\subsection{The uniform spanning tree}

Over the last thirty years, the uniform spanning tree has emerged as a model of central importance throughout probability theory, with close connections to many other topics including electrical networks \cite{Kirch1847,BurPe93},  loop-erased random walk \cite{Lawler80,Wilson96,BLPS}, the dimer model \cite{Ken00,berestycki2020dimers}, the Abelian sandpile model \cite{JarRed08,JarWer14,MajDhar92,bhupatiraju2016inequalities,MR4055195} and the random cluster model \cite{GrimFKbook,Hagg95}. 
 Aside from these connections, the UST is also interesting as an example of a model exhibiting much of the rich phenomena associated to critical statistical mechanics models, but which is much more tractable to study than essentially any other (non-Gaussian) model thanks to its close connection to random walks via Wilson's algorithm \cite{Wilson96,BLPS} and the Aldous-Broder algorithm \cite{Aldous90,broder1989generating,hutchcroft2015interlacements}.

We now very briefly introduce the model, referring the reader to e.g.\ \cite{LP:book,barlow2014loop,MR4055195} for further background.
 The \textbf{uniform spanning tree} of a finite connected graph is defined by choosing a spanning tree (i.e.\ a connected subgraph that contains every vertex and no cycles) of the graph uniformly at random. Pemantle~\cite{Pem91} proved that there is a well-defined infinite volume limit of the uniform spanning tree of the hypercubic lattice $\Z^d$ which does not depend on the boundary conditions used when taking the limit and which is connected a.s.\ if and only if $d\leq 4$ (see also \cite{BLPS}). This infinite volume limit is known as the \textbf{uniform spanning tree} of $\Z^d$ when $d\leq 4$ and the \textbf{uniform spanning forest} of $\Z^d$ when $d\geq 5$. 
The critical dimension $d=4$ is characterized by the UST just barely managing to be connected, with 
two points at Euclidean distance $n$ typically connected by a path of Euclidean diameter much\footnote{Heuristic calculations suggest that the path connecting two distant points $x$ and $y$ has Euclidean diameter distributed approximately like $\|x-y\|^{1+Z}$ where $Z$ is an exponential random variable.} larger than $n$ and with the length of the path in the tree connecting two neighbouring vertices having an extremely heavy $(\log n)^{-1/3}$ tail \cite{MR1364896}. This heavy tail on the probability of an abnormally long connection, and the related fact that the length of a loop-erased random walk in four dimensions is only very weakly concentrated, is responsible for much of the technical difficulties encountered in the paper. For example, it makes it difficult to justify the important heuristic that the volume of the intrinsic $n$-ball in the tree comes mostly from `typical' points for which the tree-geodesic to the origin has Euclidean diameter of order $n^{1/2} (\log n)^{1/6}$.

\subsection{Distributional asymptotic notation}
To facilitate a clean presentation of our main results, we use \emph{distributional asymptotic notation} (a.k.a.\ ``big-O and little-o in probability" notation). Since this notation is not at all standard in probability theory\footnote{Indeed, it is surprising how non-standard this notation is in the probability theory literature given how useful it is. The use of this notation has previously been advocated by Janson \cite{https://doi.org/10.48550/arxiv.1108.3924} who uses the notation $O_p,o_p,\Theta_p$ etc.\ rather than $\bO,\bo,\bTheta$  as we write here. We use bold letters rather than $p$ subscripts since e.g.\ $O_p$ would typically be used in probability to denote a deterministic upper bound whose implicit constants depend on a parameter $p$, and we wish to avoid clashing with this existing notational convention. The particular choice to use bold characters was made since LaTeX includes bold fonts for Greek characters by default while e.g.\ $\backslash$mathscr$\{\backslash$Theta$\}$ and $\backslash$mathcal$\{\backslash$Theta$\}$ are not defined.
}, let us take a moment to explain how it is used. We hope the reader will find this diversion worthwhile after seeing how clean the statements of our main theorems are compared with similar results in the literature, and consider using this notation in their own work.


Before introducing this notation, let us first briefly introduce standard (deterministic) asymptotic notation as we use it. We write $\asymp$, $\succeq$, and $\preceq$ for equalities and inequalities holding to within positive multiplicative constants, so that if $f$ and $g$ are non-negative then ``$f(n) \preceq g(n)$ for every $n\geq 1$" means that there exists a positive constant $C$ such that $f(n)\leq Cg(n)$ for every $n\geq 1$. (We will often drop the ``for every $n\geq 1$'' and write simply ``$f(n)\preceq g(n)$'' when doing so does not cause confusion.) We use Landau's asymptotic notation similarly, so that $f(n)=O(g(n))$, $f(n)=\Omega(g(n))$, and $f(n)=\Theta(g(n))$ mean the same thing as $f(n) \preceq g(n)$, $f(n) \succeq g(n)$, and $f(n) \asymp g(n)$ respectively, while $f(n)=o(g(n))$ means that $f(n)/g(n)\to 0$ as $n\to\infty$. More complicated expressions can be obtained by putting this notation inside functions, so that e.g.\ $f(n)=O(e^{n-o(n^{1/2})})$ means that there exists a non-negative function $h(n)$ with $n^{-1/2}h(n)\to 0$ and a positive constant $C$ such that $f(n)\leq Ce^{n-h(n)}$ for every $n\geq 1$. Implicit constants and functions given by this notation will always be non-negative, and we denote quantities of uncertain sign using $\pm O$, $\pm o$, etc. (While this is not completely standard, it greatly increases the expressive power of the notation.) Be careful to note that when forming such compound expressions, $\Theta$ should always be interpreted as the conjunction of $O$ and $\Omega$, so that ``$f(n)=\Theta(e^{n-o(n)})$" means the same thing as ``$f(n)=O(e^{n-o(n)})$ and $f(n)=\Omega(e^{n-o(n)})$", which means that there exist positive constants $c$ and $C$ and \emph{possibly distinct} non-negative functions $h^+$ and $h^-$ with $\lim_{n\to\infty}n^{-1}h^+(n)=\lim_{n\to\infty} n^{-1}h^-(n) =0$ such that $f(n) \leq C e^{n-h^+(n)}$ and $f(n)\geq c e^{n-h^-(n)}$. Whenever we use asymptotic notation, we can add a qualifier such as ``as $n\to\infty$" to mean that the inequalities in question hold only for sufficiently large $n$; this will typically be used to avoid worrying about expressions such as $\log \log n$ being undefined or negative for small values of $n$.

 We use boldface characters to apply this notation in settings where the relevant bounds are guaranteed only to hold with high probability, rather than deterministically. Given two sequences of (possibly deterministic) non-negative random variables $(X_n)$ and $(Y_n)$ defined on the same probability space, we write
\begin{align*}
	X_n &= \bO(Y_n) &&\text{ to mean that }& \qquad \lim_{\lambda \to \infty} \sup_n \mathbb{P}(X_n \geq \lambda Y_n) &= 0,\\
	X_n &= \bOmega(Y_n) &&\text{ to mean that }& \qquad \lim_{\lambda\to \infty} \sup_n \mathbb{P}(X_n \leq \lambda^{-1} Y_n) &= 0,\\
	X_n &= \bTheta(Y_n) &&\text{ to mean that }& \qquad X_n = \bO(Y_n) \text{ and } X_n &= \bOmega(Y_n), \text{ and} \phantom{\lim_n}\\
	X_n &= \bo(Y_n) &&\text{ to mean that }& \qquad \lim_{n\to \infty} \mathbb{P}(X_n \geq \eps Y_n) &= 0 \text{ for every $\eps>0$}.
\end{align*}
In other words, $X_n=\bO(Y_n)$ and $Y_n=\bOmega(X_n)$ both mean that $\{X_n/Y_n\}$ is tight in $[0,\infty)$, $X_n=\bTheta(Y_n)$ means that $\{X_n/Y_n\}$ is tight in $(0,\infty)$, and $X_n=\bo(Y_n)$ means that $X_n/Y_n$ converges to zero in probability.
As in the deterministic case, we can add a qualifier ``as $n\to\infty$'' to mean that there exists $n_0<\infty$ such that the relevant inequalities hold between $X_n$ and $Y_n$ provided that $n\geq n_0$. Let us stress again that, as in the deterministic case, the random variables denoted implicitly by our use of asymptotic notation are always taken to be non-negative. When we wish to apply this notation to quantities of uncertain sign we use $\pm \bO$, $\pm \bo$, etc.\ as appropriate.


Like in the deterministic case, this notation really begins to shine when forming more complicated compound expressions. Again, we warn the reader that in such an expression, the implicit random variables (e.g.\ those appearing in an exponent) may be different in the upper and lower bounds. Indeed this will usually be the case in our applications. 
To give a contrived example in which all these conventions come into force,  ``$X_n = \bTheta (\exp[n+\bO((\log n)^{\bO(1)})\pm\bo(\log \log n)])$ as $n\to \infty$" is equivalent to the statement that there exists $n_0<\infty$ and sequences of non-negative  random variables $(A_n^-)$, $(A_n^+)$, $(B_n^-)$, $(B_n^+)$, $(C_n^-)$, and $(C_n^+)$ and real-valued sequences of random variables $(D_n^-)$ and $(D_n^+)$ such that $(A_n^-)$ is tight in $(0,\infty]$, $(A_n^+)$, $(B_n^-)$, $(B_n^+)$, $(C_n^-)$, and $(C_n^+)$ are tight in $[0,\infty)$, $(D_n^-)$ and $(D_n^+)$ converge to zero in probability, and
\[
A_n^- e^{n+B_n^- (\log n)^{C_n^-}+D_n^- \log \log n} \leq X_n \leq A_n^+ e^{n+B_n^+ (\log n)^{C_n^+}+D_n^+ \log \log n} \qquad \text{ for every $n\geq n_0$}.
\]
Note the incredible economy we have achieved by writing this complicated condition in the simple form ``$X_n = \bTheta (\exp[n+\bO((\log n)^{\bO(1)})\pm\bo(\log \log n)])$ as $n\to \infty$"!
\begin{remark}
	As with deterministic asymptotic notation, there are many useful elementary notational identities. Of these, we will repeatedly use that for any sequence of random variables $(X_n)_{n\geq0}$ if $X_n=\mathbf{o}(Y_n)$ then $X_n=\mathbf{O}(Y_n)$, and if $X_n=\mathbf{O}(Y_n(\log n)^\delta)$ for all $\delta>0$, then $X_n=\mathbf{O}(Y_n(\log n)^{o(1)})$. Similarly, if $X_n=\mathbf{\Omega}(Y_n(\log n)^{-\delta})$ for all $\delta>0$, then $X_n=\mathbf{\Omega}(Y_n(\log n)^{-o(1)})$.
\end{remark}

\subsection{Statement of results}

We now state our main results. We begin with our results on the volumes of intrinsic balls, the proof of which occupies the majority of the paper.

\begin{theorem}[Volume growth] \label{thm:main_geometric_theorem}
	Let $\fT$ be the uniform spanning tree of $\Z^4$ and for each $n\geq 0$ let $\fB(n)=\fB(0,n)$ denote the intrinsic ball of radius $n$ around the origin in $\fT$. The volume of $\fB(n)$ satisfies the distributional asymptotics
	\[
	|\mathfrak{B}(n)| = \mathbf{\Theta}\left(\frac{n^2}{(\log n)^{1/3-o(1)}}\right) \qquad \text{ and } \qquad \mathbb{E}\abs{\mathfrak{B}(n)} = \Theta\left(\frac{n^2}{(\log n)^{1/3-o(1)}}\right)
	\]
	as $n\to\infty$. Moreover, letting $\Lambda(r)$ denote the $\ell^\infty$ ball of radius $r$ around the origin in $\Z^4$ for each $r\geq 0$, we have that
	\[
	\lim_{n\to\infty}\P\left(\mathfrak{B}(n)\subseteq \Lambda\Bigl(n^{1/2}(\log n)^{1/6+\delta}\Bigr)\right) =1
	\] 
  for every $\delta>0$.
\end{theorem}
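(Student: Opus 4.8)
The plan is to reduce the containment statement to a union bound over a Euclidean sphere and then estimate each resulting term using Wilson's algorithm together with control of loop-erased random walk in $\Z^4$. Write $r = n^{1/2}(\log n)^{1/6+\delta}$. Since $\mathfrak{B}(n)$ is a connected subtree of $\mathfrak{T}$ containing the origin, on the event $\{\mathfrak{B}(n)\not\subseteq\Lambda(r)\}$ the geodesic in $\mathfrak{T}$ from $0$ to the farthest point of $\mathfrak{B}(n)$ must cross the sphere $\{v:\|v\|_\infty=\lceil r\rceil\}$, and its initial segment up to the first crossing point $w$ is exactly the unique path from $0$ to $w$ in $\mathfrak{T}$, of length $d_{\mathfrak{T}}(0,w)\le n$. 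Hence
\[
\P\bigl(\mathfrak{B}(n)\not\subseteq\Lambda(r)\bigr)\ \le\ \sum_{\|v\|_\infty=\lceil r\rceil}\P\bigl(d_{\mathfrak{T}}(0,v)\le n\bigr),
\]
so that, since the sphere contains $O(r^3)=O(n^{3/2}(\log n)^{1/2+3\delta})$ vertices, it suffices to show $\P(d_{\mathfrak{T}}(0,v)\le n)=o(r^{-3})$ uniformly for $v$ on this sphere. (Slightly more robustly, one may instead aim to show directly that $\mathbb{E}|\mathfrak{B}(n)\setminus\Lambda(r)|=\sum_{\|v\|_\infty>r}\P(d_{\mathfrak{T}}(0,v)\le n)\to0$, which would follow from a sufficiently quantitative form of the upper bound on $\mathbb{E}|\mathfrak{B}(n)|$.)

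To estimate $\P(d_{\mathfrak{T}}(0,v)\le n)$ I would run Wilson's algorithm rooted at infinity: first reveal the spine $\Gamma_0=\LE(S^0[0,\infty))$ from $0$, then reveal the branch $\Gamma_v=\LE(S^v[0,T])$ obtained by running a random walk from $v$ until it hits $\Gamma_0$ at a vertex $x$. The path from $0$ to $v$ in $\mathfrak{T}$ is then the initial segment of $\Gamma_0$ up to $x$ followed by the reversal of $\Gamma_v$, so $d_{\mathfrak{T}}(0,v)$ equals the index of $x$ along $\Gamma_0$ plus $|\Gamma_v|$. Thus $\{d_{\mathfrak{T}}(0,v)\le n\}$ forces both that $x$ lies among the first $n$ vertices of $\Gamma_0$ and that $|\Gamma_v|\le n$. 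Since the first $n$ vertices of a loop-erased walk in $\Z^4$ typically span Euclidean distance of order $n^{1/2}(\log n)^{1/6}$, one should have $\Gamma_0[0,n]\subseteq\Lambda\bigl(n^{1/2}(\log n)^{1/6+\delta/2}\bigr)$ with probability tending to $1$ (an estimate about a single loop-erased walk, of the kind available from the study of $\Z^4$ loop-erased walk and used throughout the main body); granting this, $\|x\|_\infty$ is negligible compared to $r$, so the branch $\Gamma_v$ — a loop-erased path of at most $n$ steps — must travel Euclidean distance at least $r/2$ from $v$. In other words, on this event $\Gamma_v$ covers distance $\gtrsim n^{1/2}(\log n)^{1/6+\delta}$ in at most $n$ steps, a factor $(\log n)^{\delta}$ further than is typical.

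The remaining task is to bound the probability of such an atypically fast loop-erased excursion, and feed the resulting bound back into the union bound. This is precisely the regime governed by the one-point estimates that yield the upper bound on $\mathbb{E}|\mathfrak{B}(n)|$: quantitatively, a tail bound on the Euclidean displacement of the first $n$ steps of a loop-erased walk, or equivalently a lower tail bound on the number of loop-erased steps needed to exit a Euclidean ball of radius $\rho$, whose mean is of order $\rho^2(\log\rho)^{-1/3}$. I expect this to be the main obstacle, and the reason it is not routine is exactly the phenomenon stressed in the introduction: loop-erased walk lengths in four dimensions are only very weakly concentrated, so a naive Markov or second-moment bound produces a lower tail for the exit length far too weak to overcome the polynomial factor $r^3$. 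Obtaining the required bound seems to need the careful analysis of the interaction between the loop-erased walk length and its spatial extent that forms the technical core of the paper — one natural route being to iterate over the $\asymp\log n$ dyadic scales below $r$, at each scale combining the Gaussian-type estimate for the underlying random walk exiting an annulus too quickly with control of the amount of erasure there, so that the cost of being a factor $(\log n)^{\delta}$ too fast accumulates into a bound decaying faster than any power of $n$.
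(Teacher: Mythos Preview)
Your proposal addresses only the containment statement, leaving the volume asymptotics $|\mathfrak{B}(n)|=\mathbf{\Theta}(n^2/(\log n)^{1/3-o(1)})$ untouched; in the paper these are handled separately (Propositions~\ref{prop:volume_upper} and~\ref{prop:volume_lower}), so this is a substantial omission.

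For the containment statement itself, your reduction to a union bound over $\partial\Lambda(\lceil r\rceil)$ is correct, and your observation that the spine $\Gamma_0[0,n]$ is typically contained in $\Lambda\bigl(n^{1/2}(\log n)^{1/6+\delta/2}\bigr)$ is exactly the event $\mathscr{F}^c$ treated in the paper's proof of Proposition~\ref{prop:extr_containment}. However, the remaining step --- showing $\P(d_\fT(0,v)\le n)=o(r^{-3})$ uniformly over $v\in\partial\Lambda(\lceil r\rceil)$ --- is a genuine gap. This requires decay of order $n^{-3/2}$, whereas the loop-erased walk concentration estimate (Theorem~\ref{theorem:LERWconc}) gives only polylogarithmic decay for constant-factor deviations of $\rho_m$ or $\ell_m$. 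Your suggested iteration over dyadic scales cannot manufacture polynomial decay from these inputs: at each of the $O(\log n)$ scales the weak concentration yields at best a polylogarithmic factor, so the product over all scales is still subpolynomial in $n$. Your alternative of bounding $\mathbb{E}|\mathfrak{B}(n)\setminus\Lambda(r)|$ directly runs into the same wall: the paper's best first-moment estimate (Proposition~\ref{prop:subsume}) gives $\mathbb{E}|\{x\in\mathfrak{B}(n):\Gamma(0,x)\not\subseteq\Lambda(r)\}|\preceq_{p,\delta}n^2/(\log n)^p$, which diverges.

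The paper circumvents this entirely via an amplification argument. Rather than a union bound, it shows that if $\mathfrak{B}(n)$ escapes $\Lambda(8r)$ then either the spine escapes $\Lambda(r)$ (event $\mathscr{F}$), or some path in $\fT$ crosses the annulus $\Lambda(4r)\setminus\Lambda(r)$ in at most $r^2/(\log r)^3$ steps (event $\mathscr{E}$, of probability $e^{-\Omega((\log r)^2)}$ by Lemma~\ref{lem:annulus_crossing}; note this is a much cruder deviation than the one you need, which is precisely why a simple weak-$L^1$ plus Azuma argument suffices there), or else every annulus crossing has length at least $r^2/(\log r)^3$. In the last case a single escaping branch forces the existence of at least $r^2/(\log r)^3$ points $y\in\mathfrak{B}(n)\cap(\Lambda(4r)\setminus\Lambda(r))$ with $\Rext(\mathfrak{P}(y))\ge 4r$; combining Proposition~\ref{prop:subsume}, the stochastic domination Lemma~\ref{lemma:stoch_dom}, and the tail bound of Theorem~\ref{theorem:ext_bounds} with Markov's inequality then finishes. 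In short, the paper converts one rare escape into many mildly rare events, after which the weak first-moment bound of Proposition~\ref{prop:subsume} is enough; your union bound asks each single term to carry the full polynomial weight $r^{-3}$, which the available four-dimensional LERW estimates cannot supply.
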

Recall that in high dimensions the components of the uniform spanning forest have quadratic volume growth $\abs{\fB(n)}=\mathbf{\Theta}(n^2)$ \cite{MR4055195,barlow2016geometry}, so that the behaviour in four dimensions differs from the high-dimensional behaviour by a polylogarithmic factor as expected.

The proofs of both the upper and lower bounds of Theorem~\ref{thm:main_geometric_theorem} rely on Wilson's algorithm \cite{Wilson96,BLPS} to express properties of the tree in terms of properties of loop-erased random walks. Accordingly, they also both rely on an understanding of the behaviour of the loop-erased random walk in four dimensions developed  in \cite{MR1117680,MR1364896,MR4038044}, with the proof of the lower bounds
also relying on the control of the \emph{capacity} of the loop-erased walk developed in \cite{hutchcroft2020logarithmic,MR3945751}.
The proof of the upper bound also uses a generalisation of the method of \emph{typical times} introduced in \cite{hutchcroft2020logarithmic}, a very useful technical tool that allows us to circumvent several issues that arise from the fact that the length of a four-dimensional loop-erased random walk is only very weakly concentrated. (The use of this machinery is also responsible for the presumably unnecessary subpolylogarithmic $(\log n)^{\pm o(1)}$ errors appearing throughout our results.)

We now turn to our results concerning the \emph{random walk} on the four-dimensional UST. We write $\mathbb{P}$ and $\mathbb{E}$ for probabilities and expectations taken with respect to the joint law of the UST $\fT$ on $\Z^4$ and the random walk $X=(X_n)_{n\geq 0}$ on $\fT$ started at the origin, and write $\bP^\fT$ and $\bE^\fT$ for probabilities and expectations taken with respect to the conditional law of $X$ given $\fT$.
We write $p^{\mathfrak{T}}_n(x,y)$ for the transition probabilities of a random walk on the uniform spanning tree $\mathfrak{T}$ conditional on $\mathfrak{T}$, write $\tau_n$ for the time taken for the random walk to hit the complement of the intrinsic ball of radius of $n$, and write $d_\fT$ for the intrinsic distance 
 on $\fT$.

		\begin{theorem}[Random walk asymptotics]\label{thm:est_collect}
	Let $\fT$ be the uniform spanning tree of $\Z^4$ and let $X=(X_n)_{n\geq 0}$ be the simple random walk on $\fT$ started at the origin. The following distributional asymptotic expressions hold as $n\to\infty$:
	\begin{align}
		&&\emph{Intrinsic displacement}:&&\hspace{0.5cm}
		\,d_{\fT}(X_0,X_n),\,\max_{0\leq i\leq n} d_{\fT}(X_0,X_i) &= \mathbf{\Theta}\Biggl(\hspace{0.115em}n^{\frac{1}{3}}\hspace{0.115em}(\log n)^{\frac19- o(1)} \Biggr)
		\label{eq:distance_intr_lower} \hspace{1cm}\\
		&&\emph{Extrinsic displacement}:&& 
     \max_{0\leq i\leq n} \|X_i\|_{\infty}&=\mathbf{\Theta}\Biggl(\hspace{0.115em}n^{\frac{1}{6}}\hspace{0.115em}(\log n)^{\frac29+ o(1)} \Biggr)			\label{eq:supdistance_extr_lower}\\
		&&\emph{Return probabilities}:&& p_{2n}^{\fT}(0,0)&=\mathbf{\Theta}\Biggl(\frac{1}{n^{\frac23}}(\log n)^{\frac19- o(1)}\Biggr)
		\label{eq:return_intr_upper}\\
			&&\emph{Range}:&&
		\#\{X_m:0\leq m \leq n\}&= \mathbf{\Theta}\Biggl(\hspace{0.115em}n^{\frac{2}{3}}\frac{1}{(\log 	n)^{\frac19\pm o(1)}} \hspace{-0.125em}\Biggr)\\
		&&\emph{Hitting times}:&& \tau_n,\,  \bE^\fT[\tau_n]&= \mathbf{\Theta}\Biggl(\hspace{0.085em}n^3\hspace{0.085em} \frac{1}{(\log n)^{\frac13-  o(1)}}\Biggr). \label{eq:exit_intr}
	\end{align}
\end{theorem}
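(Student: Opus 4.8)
The plan is to deduce Theorem~\ref{thm:est_collect} from the volume growth estimate of Theorem~\ref{thm:main_geometric_theorem} together with a companion estimate on the effective resistance across intrinsic balls, by feeding both into the now-standard machinery relating volume and resistance growth to the heat kernel and random walk behaviour of strongly recurrent graphs \cite{BJKS08,kumagai2008heat} (here the spectral dimension $4/3<2$ places us in the strongly recurrent regime); this is carried out self-containedly in Section~\ref{subsec:Kumagai_Misumi}. The one geometric input needed beyond Theorem~\ref{thm:main_geometric_theorem} is the resistance estimate
\[
R_{\mathrm{eff}}\bigl(0 \leftrightarrow \fB(n)^c\bigr) = \bTheta(n) \qquad \text{as } n\to\infty,
\]
$\fT$ being regarded here as a unit-conductance electrical network. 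The upper bound $R_{\mathrm{eff}}(0\leftrightarrow \fB(n)^c)\le n+1$ is immediate, since in a tree the effective resistance to any vertex equals its intrinsic distance from $0$ and shorting $\fB(n)^c$ only decreases resistance. For the matching lower bound we use that the four-dimensional UST is a.s.\ one-ended \cite{BLPS}, so that $\fT$ is an infinite spine $(s_k)_{k\ge 0}$ emanating from $0$ with finite bushes attached: deleting every pendant subtree disjoint from $\fB(n)^c$ leaves $R_{\mathrm{eff}}(0\leftrightarrow\fB(n)^c)$ unchanged, and with high probability none of the bushes attached to $s_0,\dots,s_{\lceil n/2\rceil}$ has intrinsic depth exceeding $\lceil n/2\rceil$ --- this follows from a union bound using an upper-tail estimate on bush depths that is itself a byproduct of the loop-erased random walk analysis behind Theorem~\ref{thm:main_geometric_theorem} --- so that on this event any current from $0$ to $\fB(n)^c$ crosses the first $\lceil n/2\rceil$ spine edges in series, giving $R_{\mathrm{eff}}(0\leftrightarrow\fB(n)^c)\ge\lceil n/2\rceil$.

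Once the volume profile $|\fB(r)|=r^2(\log r)^{-1/3+o(1)}$ and the resistance profile $R(r):=R_{\mathrm{eff}}(0\leftrightarrow\fB(r)^c)=\bTheta(r)$ are in hand, the remaining estimates are bookkeeping within the general theory, which outputs the quenched mean exit time $\bE^{\fT}[\tau_n]=\bTheta\bigl(|\fB(n)|\,R(n)\bigr)$ with $\tau_n$ concentrated around it, the intrinsic displacement $d_{\fT}(X_0,X_n)$ and $\max_{0\le i\le n}d_{\fT}(X_0,X_i)$ of order $r_n$ where $r_n$ is the spatial scale determined by $|\fB(r_n)|\,R(r_n)\asymp n$, the on-diagonal heat kernel $p^{\fT}_{2n}(0,0)=\bTheta(1/|\fB(r_n)|)$, and the range $\#\{X_m:0\le m\le n\}=\bTheta(|\fB(r_n)|)$. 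Solving $|\fB(r_n)|\,R(r_n)\asymp r_n^3(\log r_n)^{-1/3+o(1)}\asymp n$ gives $r_n=n^{1/3}(\log n)^{1/9\pm o(1)}$, and substituting into the above and simplifying the polylogarithmic factors yields \eqref{eq:distance_intr_lower}, \eqref{eq:return_intr_upper}, the range estimate, and \eqref{eq:exit_intr}. For the extrinsic displacement \eqref{eq:supdistance_extr_lower} we further use the comparison between intrinsic and extrinsic balls: the upper bound combines $\max_{i\le n}d_{\fT}(X_0,X_i)=\bO(r_n)$ with the containment $\fB(m)\subseteq\Lambda(m^{1/2}(\log m)^{1/6+\delta})$ of Theorem~\ref{thm:main_geometric_theorem} (applied by monotonicity at scale $m$ of order $r_n(\log n)^{o(1)}$), while the lower bound uses that the walk reaches intrinsic distance of order $r_n$ along a route that with high probability passes through a spine vertex $s_m$ with $m$ of order $r_n$, together with the estimate $\|s_m\|_\infty=m^{1/2}(\log m)^{1/6\pm o(1)}$ for the extrinsic location of the spine.

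Since Theorem~\ref{thm:main_geometric_theorem} is taken as given, no step here is a genuinely new difficulty, but the main obstacle is adapting the Kumagai--Misumi framework --- which is formulated for pure power-law volume and resistance growth and typically invoked with failure probabilities small enough to intersect over all scales --- to accommodate the slowly-varying polylogarithmic corrections while using only the tightness-level probabilistic control that Theorem~\ref{thm:main_geometric_theorem} supplies at each fixed scale; this forces one to track carefully which scales each final estimate genuinely depends on (the exit-time lower bound, which must see the volume at essentially all scales up to $r_n$, being the most delicate), and it is the source of the presumably unnecessary $(\log n)^{\pm o(1)}$ factors decorating the exponents. A secondary difficulty, specific to \eqref{eq:supdistance_extr_lower}, is that the range is biased towards the origin, so the lower bound requires ruling out that the walk attains its maximal intrinsic displacement through an atypical, extrinsically short portion of the tree; this is handled with the same bush-depth estimates as in the resistance bound together with lower bounds on the extrinsic growth of the spine.
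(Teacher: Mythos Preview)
Your overall strategy---feed volume and resistance profiles into the Kumagai--Misumi machinery---matches the paper's, but there is a genuine gap in your resistance lower bound. You assert that with high probability none of the bushes attached to $s_0,\ldots,s_{\lceil n/2\rceil}$ has intrinsic depth exceeding $\lceil n/2\rceil$, to be verified by a union bound over bush-depth tails. However, by the intrinsic-arm estimate of \cite{hutchcroft2020logarithmic} (Theorem~\ref{theorem:past} here), the probability that the past of a single spine vertex has intrinsic radius at least $n/2$ is of order $(\log n)^{1/3}/n$, so the union bound over $\lceil n/2\rceil$ vertices yields a quantity of order $(\log n)^{1/3}$, which diverges. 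In fact the \emph{expected} number of such deep bushes is $\asymp(\log n)^{1/3}$, so your ``high-probability'' event actually has probability tending to zero. Accordingly the paper does not prove $\mathscr{R}_{\mathrm{eff}}(0\leftrightarrow\partial\fB(0,n);\fT)=\bTheta(n)$ but only the weaker $n(\log n)^{-\bo(1)}$ of Theorem~\ref{cor:eff_cond}; its proof (Section~\ref{subsec:resistance}) sidesteps the union bound by invoking the tree conductance inequality $\mathscr{C}_{\mathrm{eff}}(0\leftrightarrow\partial\fB(0,n))\leq k^{-1}N(n,k)$ of Lemma~\ref{lemma:eff_res}, averaging over $n\le k\le 2n$, and then applying mass-transport to bound the resulting expectation by a quantity comparable to $n^{-2}|\fB(0,2n)|\cdot\P(\fP(0,n)\neq\emptyset)$. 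This ties the resistance bound to the volume estimate of Theorem~\ref{thm:main_geometric_theorem} rather than establishing it independently, and is precisely why it inherits a $(\log n)^{o(1)}$ loss.

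Two further differences worth noting. First, the paper does not extract the intrinsic displacement upper bound or the exit-time lower bound from the Kumagai--Misumi framework; instead both come from the Markov-type second-moment bound of Theorem~\ref{thm:displacement}, which yields $\mathbb{E}[\max_{i\le n}d_\fT(X_0,X_i)^2]\preceq n^{2/3}(\log n)^{2/9}$ with no subpolylogarithmic slack. Second, the extrinsic displacement lower bound is obtained not by locating the walk relative to the spine but by a Cauchy--Schwarz argument: $\bP^\fT(\max_{m\le n}\|X_m\|_\infty\le R)\preceq |\{x:\Gamma(0,x)\subseteq\Lambda(R)\}|^{1/2}\,p_{2n}^\fT(0,0)^{1/2}$, combined with the extrinsic volume bound of Proposition~\ref{prop:extr_volume} and the heat-kernel upper bound. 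Your spine-based approach to this step would require additional control (ruling out that the walk sits far out in a bush that is extrinsically close to the origin) that the paper avoids entirely.
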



\begin{remark}
It is reasonably straightforward to adapt the proofs of \cite{MR4055195} to prove that, in four dimensions, all the quantities we consider here satisfy Alexander-Orbach asymptotics up to $(\log n)^{\pm O(1)}$ factors. Identifying the \emph{correct} powers of $\log$ is significantly more difficult and is the primary contribution of this paper.
\end{remark}

 As mentioned above, the behaviour of the random walk on the uniform spanning tree has previously been studied in dimensions $d=2$ \cite{barlow2011spectral,barlow2014subsequential} $d=3$ \cite{MR4348685}, and $d\geq5$ \cite{MR4055195}, with the two cases $d=2$ and $d=3$ presenting unique challenges that are largely distinct from those associated to the critical dimension $d=d_c=4$ considered here.
While we are the first to study the polylogarithmic corrections to the volume of balls and the behaviour of random walks on the UST at $d=4$, our work builds upon the substantial literature studying other aspects of the 4d UST, the highlights of which include 
 \cite{MR1364896,schweinsberg2009loop,hutchcroft2020logarithmic,MR4038044,MR1117680,MR835809}. Our work is influenced most strongly by the recent work of Sousi and the second author \cite{hutchcroft2020logarithmic}; we rely on both the results proven and the techniques developed in that paper in numerous ways.

\medskip

Following
  Kumagai-Misumi \cite{kumagai2008heat}, which collects and generalises results of \cite{MR2177164,BJKS08,MR2247823}, estimates of the form proven in Theorem~\ref{thm:est_collect} can all be deduced from the volume growth estimates of Theorem~\ref{thm:main_geometric_theorem} together with estimates on the \emph{effective resistance} between the origin and the boundary of a ball in the tree. The relevant effective resistance estimates will in turn be deduced from Theorem~\ref{thm:main_geometric_theorem} together with the asymptotics of the \emph{intrinsic arm probability} computed in \cite{hutchcroft2020logarithmic}.
We let $\mathscr{R}_{\mathrm{eff}}(A\leftrightarrow B;G)$ denote the effective resistance between sets $A,B\subseteq V[G]$ in the graph $G$, where we assign unit resistance to each edge $e\in E[G]$, so that if $\deg_\fT(0)$ denotes the degree of $0$ in $\fT$ then $\mathscr{R}_{\mathrm{eff}}(0\leftrightarrow \partial \fB(0,n);\fT)^{-1}:=\deg_\fT(0) \bP^\fT($hit $\partial \fB(0,n)$ before returning to~$0).$ Background on effective resistances can be found in e.g.\ \cite{LP:book,KumagaiBook}.

\begin{theorem}[Effective resistance] \label{cor:eff_cond}
	Let $\fT$ be the uniform spanning tree of $\Z^4$ and for each $n\geq 0$ let $\partial \fB(n)= \partial \fB(0,n)$ denote the set of vertices with distance exactly $n$ from the the origin in $\fT$. Then
	\[\mathscr{R}_{\mathrm{eff}}(0\leftrightarrow \partial \fB(0,n);\fT) = n (\log n)^{-\mathbf{o}(1)}\]
	as $n\to\infty$.
\end{theorem}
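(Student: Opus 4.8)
\noindent\textit{Proof proposal.} I would prove the two bounds separately. The upper bound $\mathscr{R}_{\mathrm{eff}}(0\leftrightarrow\partial\fB(0,n);\fT)\le n$ needs no input beyond connectivity: since $\fT$ is almost surely connected and unbounded there is a vertex $w\in\partial\fB(0,n)$, and deleting every edge that does not lie on the geodesic from $0$ to $w$ can only increase the effective resistance, so $\mathscr{R}_{\mathrm{eff}}(0\leftrightarrow\partial\fB(0,n);\fT)\le\mathscr{R}_{\mathrm{eff}}(0\leftrightarrow w;\fT)\le n$ by Rayleigh monotonicity. The content of the theorem is therefore the lower bound $\mathscr{R}_{\mathrm{eff}}(0\leftrightarrow\partial\fB(0,n);\fT)=\mathbf{\Omega}\bigl(n(\log n)^{-o(1)}\bigr)$, which I would obtain from the Nash--Williams inequality applied to carefully chosen cutsets.

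Root $\fT$ at $0$, call a vertex $v$ \emph{essential} if some descendant of $v$ (possibly $v$ itself) lies in $\partial\fB(0,n)$, and let $b_k$ be the number of essential vertices at distance exactly $k$ from $0$. Deleting every edge not lying on some geodesic from $0$ to $\partial\fB(0,n)$ leaves $\mathscr{R}_{\mathrm{eff}}(0\leftrightarrow\partial\fB(0,n);\fT)$ unchanged (no current flows through the deleted pendant subtrees), and in the remaining tree the edges joining distance $k-1$ to distance $k$ form a cutset of cardinality exactly $b_k$; since these cutsets are disjoint, Nash--Williams gives $\mathscr{R}_{\mathrm{eff}}(0\leftrightarrow\partial\fB(0,n);\fT)\ge\sum_{k=1}^{\lfloor n/2\rfloor}b_k^{-1}$. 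As $b_k^{-1}\ge\lambda^{-1}\ind{b_k\le\lambda}$ and $\#\{k\le n/2:b_k>\lambda\}\le\lambda^{-1}\sum_{k\le n/2}b_k$ for every $\lambda\ge1$, one gets $\sum_{k\le n/2}b_k^{-1}\ge\lambda^{-1}\bigl(\lfloor n/2\rfloor-\lambda^{-1}\sum_{k\le n/2}b_k\bigr)$; taking $\lambda=(\log n)^\eps$ for small fixed $\eps>0$ and then letting $\eps\downarrow0$, the lower bound reduces (via the Remark on distributional asymptotics) to the purely annealed estimate $\sum_{k=1}^{\lfloor n/2\rfloor}\mathbb{E}[b_k]\le n(\log n)^{o(1)}$.

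To bound $\mathbb{E}[b_k]$ I would use that the four-dimensional UST is one-ended almost surely: there is a unique infinite ray $0=\xi_0,\xi_1,\dots$, and every vertex off this ray lies in a finite bush hanging from it. The ray contributes exactly one essential vertex at scale $k$, namely $\xi_k$. An off-ray vertex $v$ at distance $k$ is essential if and only if the subtree of $\fT$ below $v$ has intrinsic radius at least $n-k$; by the Markov property of the UST (via Wilson's algorithm) this subtree is, conditionally on the rest of $\fT$, distributed up to negligible boundary effects like the past of the origin studied in \cite{hutchcroft2020logarithmic}, so $v$ is essential with conditional probability at most $\theta(n-k):=\mathbb{P}(\text{the past of the origin has intrinsic radius}\ge n-k)$. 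Summing over the $\mathbb{E}|\partial\fB(0,k)|-1$ off-ray vertices at distance $k$ and using that $\theta$ is nonincreasing gives, for $k\le n/2$, $\mathbb{E}[b_k]\le1+\theta(\lceil n/2\rceil)\,\mathbb{E}|\partial\fB(0,k)|$, and hence $\sum_{k\le n/2}\mathbb{E}[b_k]\le n/2+\theta(\lceil n/2\rceil)\,\mathbb{E}|\fB(0,\lfloor n/2\rfloor)|$. The intrinsic one-arm asymptotics of \cite{hutchcroft2020logarithmic} give $\theta(m)=m^{-1}(\log m)^{1/3+o(1)}$, while Theorem~\ref{thm:main_geometric_theorem} gives $\mathbb{E}|\fB(0,m)|=m^2(\log m)^{-1/3+o(1)}$, so the logarithmic corrections cancel and $\theta(m)\,\mathbb{E}|\fB(0,m)|=m(\log m)^{o(1)}$, yielding $\sum_{k\le n/2}\mathbb{E}[b_k]\le n(\log n)^{o(1)}$ as required. (This cancellation is precisely why the resistance, unlike the volume, carries no genuine polylogarithmic correction.)

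The crude truncation above survives into a high-probability statement because it uses only the first moment of $\sum_k b_k$, but the two points where real care is needed --- and where I expect the main difficulty to lie --- are both consequences of the extreme weakness of concentration in the four-dimensional UST stressed in the introduction: the depth of a bush has only a $(\log)^{-1/3}$-type tail, so the $b_k$ are heavy-tailed and strongly dependent across scales, and one must control $\theta$ and the bush volumes simultaneously at the non-lattice random depth $n-k$ while keeping the cascade of ``$o(1)$'' exponents honest. Following \cite{hutchcroft2020logarithmic} I would handle both issues using the method of \emph{typical times} developed there (and used for Theorem~\ref{thm:main_geometric_theorem}), which replaces $n$ by well-concentrated random proxies and is also the source of the presumably removable $(\log n)^{\pm o(1)}$ slack in the statement. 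A final technical point, likewise inherited from \cite{hutchcroft2020logarithmic}, is making precise the sense in which the sub-trees appearing in the bush decomposition behave like independent pasts.
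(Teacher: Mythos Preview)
Your overall architecture is the same as the paper's: control the effective conductance by counting, at each intrinsic scale $k$, the ``essential'' vertices lying on geodesics to $\partial\fB(0,n)$; observe that an off-spine vertex is essential only if its past has intrinsic radius at least $n-k$; and then exploit the cancellation between the intrinsic one-arm probability $\theta(m)\asymp m^{-1}(\log m)^{1/3}$ and the volume bound $\mathbb{E}|\fB(0,m)|\asymp m^2(\log m)^{-1/3+o(1)}$. Your Nash--Williams formulation is essentially Lemma~\ref{lemma:eff_res}, and the paper likewise averages over a range of scales $m\in[n,2n]$ rather than a single one.

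The genuine gap is the step
\[
\mathbb{E}[b_k-1]\ \le\ \theta(n-k)\,\mathbb{E}\bigl|\partial\fB(0,k)\bigr|,
\]
which you justify by a ``Markov property of the UST via Wilson's algorithm''. No such property holds: the conditional law of the past of $v$ given $\{v\in\partial\fB(0,k)\}$ is \emph{not} the unconditional law of the past, nor is it stochastically dominated by it in any way that follows directly from Wilson's algorithm or from Lemma~\ref{lemma:stoch_dom} (that lemma concerns conditioning on a union of futures, not on intrinsic distances). Indeed, if one applies the mass-transport principle to your sum one obtains
\[
\sum_{v}\pr\bigl(v\in\partial\fB(0,k),\ \fP(v,n-k)\neq\emptyset\bigr)
=\mathbb{E}\Bigl[\,\bigl|\partial\fB(0,k)\bigr|\;\mathbbm{1}\bigl(\fP(0,n-k)\neq\emptyset\bigr)\Bigr],
\]
and there is no reason for $|\partial\fB(0,k)|$ and $\{\fP(0,n-k)\neq\emptyset\}$ to be negatively correlated; a priori this expectation could be much larger than $\theta(n-k)\,\mathbb{E}|\partial\fB(0,k)|$. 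The paper's resolution is \emph{not} typical times, which play no direct role in this proof. Instead, one first multiplies the conductance by the indicator $\mathbbm{1}\bigl(|\fB(0,4n)|\le M\bigr)$ with $M=\lambda^{1/2}n^2(\log n)^{-1/3+\delta}$, then applies mass-transport, and uses the deterministic bound
\[
\mathbb{E}\Bigl[\,|\fB(0,2n)|\;\mathbbm{1}\bigl(|\fB(0,2n)|\le M,\ \fP(0,n)\neq\emptyset\bigr)\Bigr]\le M\cdot\theta(n).
\]
The truncation event has high probability by Theorem~\ref{thm:main_geometric_theorem}, and a final Markov-inequality step converts the resulting first-moment bound on the (truncated) conductance into the claimed distributional lower bound on the resistance. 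In short: the decoupling you flag as a ``final technical point'' is the actual content of the proof, and it is handled by mass-transport plus a volume truncation rather than by typical times.
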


Note that the linear upper bound $\mathscr{R}_{\mathrm{eff}}(0\leftrightarrow \partial \fB(0,n)) \leq n$ is trivial and holds for any graph. Together with existing results in other dimensions \cite{barlow2014subsequential,MR4348685,MR4055195}, Theorem~\ref{cor:eff_cond} shows that the UST has (approximately) linear effective resistance growth in \emph{every} dimension.
As will be clear from the proof, this is a consequence of the scaling relation
\begin{equation}
\label{eq:scaling_relation}
\P(\text{the past of the origin has intrinsic diameter $\geq n$}) \approx \frac{n}{\text{typical volume of an intrinsic $n$-ball}},
\end{equation}
which also holds in every dimension. Here, the \textbf{past} of the origin 
is the union of the origin and the finite connected component of the UST left when the origin is deleted; estimating the probability that the past is large in various  senses is the main subject of \cite{hutchcroft2020logarithmic}, which in particular establishes up-to-constants estimates on the left hand side of \eqref{eq:scaling_relation}.
  Currently, however, there is no direct proof of this scaling relation, which in four dimensions is verified only by computing the two sides separately in \cite{hutchcroft2020logarithmic} and the present paper. It would be very interesting to have a direct and general proof of this relation in all dimensions that worked without computing either quantity.

\medskip

While Theorems~\ref{thm:main_geometric_theorem} and \ref{cor:eff_cond} are sufficient to compute the exact logarithmic corrections to the asymptotic properties of the random walk on the UST 
using the methods of \cite{kumagai2008heat,barlow2016geometry} as discussed above,
we will also show that a significantly stronger bound on the displacement of the random walk
   can be proven using the Markov-type method pioneered in the work of Lee and coauthors \cite{lee2020relations,MR4348678,MR4369717,MR3077911}.
\begin{theorem}[Sharp upper bounds on the mean-squared displacement] \label{thm:displacement}
	Let $\fT$ be the uniform spanning tree of $\Z^4$ and let $X=(X_n)_{n\geq 0}$ be the simple random walk on $\fT$ started at the origin. Then
	\begin{equation}
		\E{\max_{0\leq i \leq n}d_{\fT}(X_0,X_i)^2}\preceq n^{2/3}(\log n)^{2/9}
	\end{equation}
	for every $n\geq 2$.
\end{theorem}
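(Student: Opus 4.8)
The plan is to obtain the bound by integrating the tail of the maximal displacement $D_n := \max_{0\le i\le n}d_\fT(X_0,X_i)$. Writing $\tau_m$ for the first time the walk exits the intrinsic ball $\fB(0,m)$, we have the elementary identity $\{D_n\ge r\}=\{\tau_{r-1}\le n\}$ and hence
\[
\E{D_n^2}=\sum_{r\ge 1}(2r-1)\,\P(\tau_{r-1}\le n).
\]
For $r\le K n^{1/3}(\log n)^{1/9}$ we bound the probability by $1$, which contributes $O(K^2 n^{2/3}(\log n)^{2/9})$ — already of the claimed order. Everything therefore reduces to controlling the sum over $r> Kn^{1/3}(\log n)^{1/9}$, and for this it suffices to prove that $\P(\tau_r\le n)$ decays faster than $\bigl(n^{1/3}(\log n)^{1/9}/r\bigr)^{2}$ as $r/(n^{1/3}(\log n)^{1/9})\to\infty$, uniformly in $n$.

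To produce such a tail bound I would combine two ingredients. The first is geometric: the volume estimate of Theorem~\ref{thm:main_geometric_theorem}, the resistance estimate of Theorem~\ref{cor:eff_cond}, and the standard Green's-function identity $\bE^\fT_0[\tau_m]\asymp |\fB(0,m)|\cdot\mathscr{R}_{\mathrm{eff}}(0\leftrightarrow\partial\fB(0,m);\fT)$ together give $\bE^\fT_0[\tau_m]=m^3(\log m)^{-1/3\pm\bo(1)}$, so that $r_\ast:=n^{1/3}(\log n)^{1/9}$ is — up to $(\log n)^{\pm o(1)}$ — the scale the walk reaches in $n$ steps. The second ingredient is the Markov-type method of Lee and coauthors \cite{lee2020relations,MR3077911,MR4348678,MR4369717}: trees have Markov type~$2$ with a universal constant (Naor--Peres--Schramm--Sheffield), so applying the Markov-type-$2$ inequality to the walk stopped at $\tau_m$ yields, schematically, $\bE^\fT_0[\min(D_n,m)^2]\preceq \bE^\fT_0[n\wedge\tau_m]$; chaining this across a geometric sequence of scales $m$ converts the single-scale exit-time estimate into a stretched-exponential tail for $\tau_r$ in $r/r_\ast$, which is more than enough for the sum above, and — run as a self-improving recursion on the exponent as in \cite{lee2020relations} — simultaneously upgrades the bare $\bO$-tightness of Theorem~\ref{thm:est_collect} to the sharp $L^2$ statement. (An alternative, heavier route to the tail bound, giving only $(\log n)^{2/9+o(1)}$, is to feed the volume and resistance estimates into the quenched heat-kernel bounds of Kumagai--Misumi \cite{kumagai2008heat} on the high-probability event that the environment near the origin is regular, absorbing atypical environments via Markov's inequality and the expectation bound $\E{|\fB(0,m)|}=\Theta(m^2(\log m)^{-1/3+o(1)})$.)

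The step I expect to be the main obstacle — and the reason the Markov-type method rather than the softer heat-kernel route is needed — is matching the \emph{exact} power $(\log n)^{2/9}$ with no $(\log n)^{o(1)}$ slack, despite the volume and resistance inputs carrying precisely such slack. The resolution should be that the $(\log n)^{\pm o(1)}$ errors enter only at the intermediate scales used in the chaining and not at the outermost scale $r_\ast=n^{1/3}(\log n)^{1/9}$, which is taken to be exactly the square root of the target, and at which one uses only a \emph{constant}-order feature of the exit-time estimate — e.g.\ that $\P^\fT_0(\tau_{Cr_\ast}\le n)$ is bounded away from $1$ for a large constant $C$, together with these events being summable across a geometric sequence of scales. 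Checking that the recursion closes without the errors accumulating is the technical heart of the proof. Two subsidiary points also require care: controlling the error from passing to a large finite truncation $\fB(0,N)$ and from the lack of a stationary start when invoking the Markov-type inequality, and converting the resulting quenched tail bounds into the annealed expectation bound by discarding atypical environments at a quantitative probability cost supplied by the expectation estimates in Theorem~\ref{thm:main_geometric_theorem}.
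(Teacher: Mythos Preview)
Your proposal has a genuine gap at the core step, and the paper's argument is both different and much shorter.

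\medskip

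\textbf{The gap.} Your ``schematic'' inequality $\bE^\fT_0[\min(D_n,m)^2]\preceq \bE^\fT_0[n\wedge\tau_m]$ comes from applying Markov-type~$2$ to the \emph{unweighted} tree metric. But with unit edge lengths the one-step increment on the right is $1$, so all you ever get is the diffusive bound $\E[D_n^2]\preceq n$; indeed for $m\gg r_\ast$ the walk typically has $\tau_m>n$, so $\E[n\wedge\tau_m]\approx n$ and your inequality is vacuous. Chaining this across scales cannot manufacture a sub-diffusive exponent out of a diffusive one. The self-improving recursions in Lee's work \cite{lee2020relations} do not run on the bare tree metric either --- they require, at each stage, a Markov-type bound in a \emph{weighted} metric whose typical one-step increment is already small, and that is precisely the ingredient missing from your plan. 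Meanwhile your other inputs (Theorems~\ref{thm:main_geometric_theorem} and~\ref{cor:eff_cond}) carry $(\log n)^{\pm o(1)}$ slack, and your argument that this slack ``enters only at intermediate scales'' does not hold up: even your constant-order feature --- that $\P(\tau_{Cr_\ast}\le n)$ is bounded away from $1$ --- is a lower-tail estimate on $\tau$ that you cannot extract from an upper bound on $\bE^\fT[\tau_m]$ alone, and any route to it via volume/resistance inherits their errors. At best your plan (or your ``heavier route'') yields $n^{2/3}(\log n)^{2/9+o(1)}$.

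\medskip

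\textbf{What the paper does instead.} The paper applies the Markov-type inequality \emph{once}, to the weighted metric with vertex weight
\[
\omega_r(v)=\mathbbm{1}\bigl(\fP(v,r)\neq\emptyset\bigr),
\]
following Ganguly--Lee \cite{MR4369717}. This makes the expected squared one-step increment equal to $\P(\fP(0,r)\neq\emptyset)\asymp (\log r)^{1/3}/r$ by Theorem~\ref{theorem:past}, giving
\[
\E\Bigl[\max_{0\le i\le n} d_{\omega_r}(X_0,X_i)^2\Bigr]\preceq \frac{n(\log r)^{1/3}}{r}.
\]
A short deterministic argument on the tree then shows $d_\fT(u,v)\le 4r+4d_{\omega_r}(u,v)$ (along the geodesic from $u$ to $u\vee v$, once you are $r$ steps in, every vertex has $u$ in its past at depth $\ge r$ and so has weight $1$). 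Hence $\E[D_n^2]\preceq r^2+n(\log r)^{1/3}/r$, and taking $r=\lceil n^{1/3}(\log n)^{1/9}\rceil$ gives the sharp bound. Note that this uses \emph{only} the past-radius tail from \cite{hutchcroft2020logarithmic}, which has no $o(1)$ in its exponent --- Theorems~\ref{thm:main_geometric_theorem} and~\ref{cor:eff_cond} are not invoked at all. That is exactly why the $(\log n)^{o(1)}$ slack disappears, and it is the idea your proposal is missing.
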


The specific argument used to prove this theorem is inspired closely by the work of Ganguly and Lee~\cite{MR4369717}.
Briefly, the idea is to use the universal Markov-type inequality for weighted metrics on trees \cite{MR3077911} to prove a \emph{diffusive} upper bound for the random walk with respect to a modified metric supported only on vertices of the tree whose past has large intrinsic diameter, then deduce the desired subdiffusive estimate in the original metric. The tail bounds on the intrinsic diameter of the past of the origin proven in \cite{hutchcroft2020logarithmic} are precisely what is needed to carry this argument through. In particular, the proof of Theorem~\ref{thm:displacement} does \emph{not} rely on Theorem~\ref{thm:main_geometric_theorem} or the theory of typical times, allowing us to avoid the $(\log n)^{\pm o(1)}$ present in the statement of that theorem. 

\begin{remark}As discussed in \cite[Example 2.6]{BJKS08}, although the \emph{typical} displacement of the random walk can always be controlled in terms of volume growth and resistance growth, it is possible in general for the displacement not to be uniformly integrable, so that its mean grows significantly faster than its median. As such, the second moment estimate provided by Theorem~\ref{thm:displacement} is significantly stronger than what can be deduced directly from Theorems~\ref{thm:main_geometric_theorem} and \ref{cor:eff_cond} by the techniques of \cite{kumagai2008heat,BJKS08}.
\end{remark}

\section{Intrinsic volume growth}
\label{sec:volume}
In this section we prove Theorem~\ref{thm:main_geometric_theorem}. The upper and lower bounds of the theorem, which use completely different techniques, are proven in Sections~\ref{subsec:volume_upper} and \ref{subsec:volume_lower} respectively.
Both parts of the proof will utilize the connections between the uniform spanning tree and the loop-erased random walk implied by Wilson's algorithm, and so to proceed we must provide notation for the loop-erased random walk and some related quantities.\\

\noindent \textbf{Loop-erased random walk. }
For each $-\infty\leq n\leq m\leq \infty$, let $L(n,m)$ be the graph with vertex set $\{i\in\Z:n\leq i\leq m\}$ and edge set $\{\{i,i+1\}:n\leq i\leq m-1\}$. A path is then a multigraph homomorphism from $L(n,m)$ to the hypercubic lattice $\Z^4$ for some $-\infty\leq n\leq m\leq \infty$. We write $w_i=w(i)$ for the vertex visited at time $i$.  For $n\leq b\leq m$, we write $w^b$ for the restriction of $w$ to $[n,b]$, and call $w^b$ the path \textbf{stopped at} $b$. In particular, given a random walk $X$, we will often use the notation $X^T$ for a random walk stopped at some possibly random time $T$. A path is said to be \textbf{transient} if it visits every vertex of $\Z^4$ at most finitely many times. In particular, finite paths are always transient. Given a transient path $w:L(0,m)\rightarrow\Z^4$, we recursively define the sequence of times $\ell_n(w)$ by $\ell_0(w)=0$, and
\[
\ell_{n+1}(w) = 1 + \max\{k:w_k=w_{\ell_n}\},
\]
where we terminate the sequence the first time $\max\{k:w_k=w_{\ell_n}\}=m$ when $m<\infty$. The loop-erasure of $w$ is then the path induced by the sequence of neighbouring vertices
\[
\LE(w)_i = w_{\ell_i(w)}.
\]
We will also need the quantity
\[
\rho_n(w) = \max\{m\geq 0:\ell_m(w)\leq n\},
\]
which for each $n\geq 0$  counts the number of points up to time $n$ (excluding $w_0$) which are not erased when computing the loop-erasure of $w$, so that $(\ell_n)_{n\geq 0}$ and $(\rho_n)_{n\geq 0}$ are inverses of each other in the sense that
\[
\ell_n(w)\leq m \quad \text{if and only if} \quad \rho_m(w) \geq n,
\]
for every $n,m\geq 0$. 

The loop-erasure of a simple random walk is known as the \textbf{loop-erased random walk}. The theory of loop-erased random walk was both introduced and developed extensively by Lawler \cite{Lawler80}, whose results on the four-dimensional loop-erased random walk \cite{MR1364896,MR1117680}
 play an extensive role in this paper both directly and through inputs to \cite{hutchcroft2020logarithmic}. Given a random walk $X$, we will usually abbreviate $\ell_n=\ell_n(X)$ and $\rho_n=\rho_n(X)$.
It will also be convenient to define the notation
\[
\LE_\infty(X^n) := \LE(X)^{\rho_n}
\]
for $n\geq 0$, giving the component of the infinite loop erasure $\LE(X)$ which is contributed by the first $n$ steps of the random walk $X$. We emphasise that the brackets of $\LE_\infty(X^n)$ do \textit{not} indicate that $\LE_\infty(X^n)$ is a function of just $X^n$.
The following concentration estimates of Lawler \cite{MR1117680,MR1364896}, as stated in \cite[Theorem 2.2]{hutchcroft2020logarithmic}, will be used repeatedly throughout the the paper.
\begin{theorem}[\!\!\cite{hutchcroft2020logarithmic}, Theorem 2.2]\label{theorem:LERWconc} Let $X$ be a simple random walk on $\Z^4$, then
  \begin{align*}\pb\left(\left\vert\frac{\rho_n}{n(\log n)^{-1/3}}-1\right\vert>\varepsilon\right)&\preceq_\varepsilon \frac{\log\log n}{(\log n)^{2/3}}\qquad\text{and hence}\\
    \pb\left(\left\vert\frac{\ell_n}{n(\log n)^{1/3}}-1\right\vert>\varepsilon\right)&\preceq_\varepsilon \frac{\log\log n}{(\log n)^{2/3}},
  \end{align*}
  for every $\epsilon>0$ and $n\geq 3$.
\end{theorem}

\textbf{Wilson's algorithm rooted at infinity} \cite{Wilson96,BLPS} allows us to build a sample of the UST of $\Z^4$ (or any other transient graph) out of loop-erased random walks. This algorithm is very important to most analyses of the UST. We will assume that the reader is already familiar with Wilson's algorithm, referring them to e.g.\ \cite{LP:book} for background otherwise.

\medskip

Finally, let us introduce notation concerning the geometry of $\Z^4$ and the tree $\fT$.
  We write $\norm{x}$ for the $\ell^\infty$ norm of $x\in\Z^4$ and write $\Lambda(x,r)$ for the $\ell^\infty$ ball around $x\in\Z^d$ of radius $r$. For convenience, we will write $\Lambda(r)$ for $\Lambda(0,r)$. For each $x\in \Z^4$ and $r\geq 1$, $\mathfrak{B}(x,r)$ will denote the intrinsic ball of radius $r$  around $x$ in $\fT$, with $\mathfrak{B}(r):=\mathfrak{B}(0,r)$. 
For each pair of vertices $x,y\in \Z^4$ we write $\Gamma(x,y)$ for the unique simple path 
 between $x$ and $y$ in 
$\fT$, which is well-defined since the UST of $\Z^4$ is a.s.\ connected \cite{Pem91,BLPS}, and write $\Gamma(x,\infty)$ for the \textbf{future} of $x$ in $\mathfrak{T}$, i.e.\ the unique infinite simple path in $\mathfrak{T}$ with $x$ as an endpoint, which is well-defined since the UST of $\Z^4$ is one-ended a.s.\ \cite{Pem91,BLPS}. Given two vertices $x,y\in\Z^4$ we will denote by $x\vee y=y\vee x$ the unique point at which the futures of $x$ and $y$ in $\mathfrak{T}$ first intersect.

The \textbf{past} of a vertex $v$ in the uniform spanning tree $\fT$, denoted\footnote{This character is $\backslash\mathrm{mathfrak}\{\mathrm{P}\}$.} $\mathfrak{P}(v)$, is the union of the vertex and the finite components that are disconnected from infinity when the vertex is deleted from $\fT$. We write $\mathfrak{P}(v,n)$ for $\mathfrak{P}(v)\cap \mathfrak{B}(v,n)$ 
 and write $\partial\mathfrak{B}(v,n)$ for the set of vertices in $\mathfrak{T}$ at intrinsic distance exactly $n$ from $v$.
 Further discussion of the basic topological features of the UST used here can be found in \cite[Chapter 10]{LP:book}.

\subsection{Upper bounds}
\label{subsec:volume_upper}

In this section we prove the following two propositions,  which establish the upper bounds of Theorem \ref{thm:main_geometric_theorem}.
Throughout this section we will write $\asymp$, $\preceq$, and $\succeq$ with subscripts such as $\delta$ and $p$ to mean that the implicit constants are allowed to depend on these parameters.

\begin{proposition} \label{prop:volume_upper}
	Let $\fT$ be the uniform spanning tree of $\Z^4$. Then 
	\[
	\mathbb{E}\abs{\mathfrak{B}(n)} = O\left(\frac{n^2}{(\log n)^{1/3-o(1)}}\right) 
	\]
  as $n\to\infty$.
\end{proposition}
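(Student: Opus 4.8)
The plan is to estimate $\mathbb{E}\abs{\fB(n)} = \sum_{x\in\Z^4}\mathbb{P}(d_\fT(0,x)\le n)$ term by term using Wilson's algorithm rooted at infinity. Running Wilson's algorithm first from $0$ and then from $x$, we may realise $\Gamma(0,\infty)$ as the loop-erasure $\LE(X)$ of a simple random walk $X$ started at $0$, and, conditionally on $X$, realise $\Gamma(x,x\vee 0)$ as $\LE(Y^\sigma)$ for an independent simple random walk $Y$ started at $x$ and stopped at its first hitting time $\sigma$ of $\Gamma(0,\infty)$. Since $\Gamma(0,x)$ is the concatenation of the initial segment of $\Gamma(0,\infty)$ up to $x\vee 0$ with the reversal of $\LE(Y^\sigma)$, we obtain the identity
\[
d_\fT(0,x) = \bigl(\text{index of } x\vee 0 \text{ along } \LE(X)\bigr) + \bigl\lvert\LE(Y^\sigma)\bigr\rvert,
\]
so that the event $\{d_\fT(0,x)\le n\}$ forces the first hit of $\LE(X)$ by $Y$ to lie among the first $n$ steps of $\LE(X)$ (which we write $\gamma^n$, where $\gamma := \LE(X)$) and forces $\LE(Y^\sigma)$ to have at most $n$ edges. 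It therefore suffices to show that $\sum_{x}\mathbb{P}(d_\fT(0,x)\le n)\preceq_\delta n^2(\log n)^{-1/3+\delta}$ for every $\delta>0$, which gives the proposition after a routine diagonalisation in $\delta$.

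I would prove this by conditioning on $\gamma$ and bounding $\sum_x\mathbb{P}_x\bigl(Y\text{'s first hit of }\gamma\text{ lies in }\gamma^n,\ \lvert\LE(Y^\sigma)\rvert\le n\bigr)$ uniformly over $\gamma$ on a high-probability event. The loop-erased-length constraint is what keeps this sum finite: by Theorem~\ref{theorem:LERWconc} applied to $Y$, the event $\lvert\LE(Y^\sigma)\rvert\le n$ forces $\sigma\le n(\log n)^{1/3+o(1)}$ with high probability, so $Y$ travels Euclidean distance at most $n^{1/2}(\log n)^{1/6+o(1)}$ before hitting $\gamma$, which together with Lawler's bounds on the spatial extent of the four-dimensional loop-erased walk confines the relevant $x$ to an $\ell^\infty$-ball of radius $n^{1/2}(\log n)^{1/6+o(1)}$ about the origin; the tail estimates underlying this confinement are also what yield the containment statement of Theorem~\ref{thm:main_geometric_theorem}. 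On this ball I would estimate $\mathbb{P}_x(Y\text{ hits }\gamma^n)$ by a last-exit decomposition over the hitting vertex, reducing matters to sums of Green's functions weighted by the equilibrium measure of $\gamma^n$ and to quantitative hittability estimates for the four-dimensional loop-erased walk from Lawler's work and \cite{hutchcroft2020logarithmic}; keeping the genuine constraint $\lvert\LE(Y^\sigma)\rvert\le n$, rather than only $\sigma\le n(\log n)^{1/3+o(1)}$, is precisely what supplies the extra logarithmic saving separating the sharp answer $n^2(\log n)^{-1/3+o(1)}$ from the cruder bound $n^2(\log n)^{O(1)}$.

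The main obstacle throughout is the very weak concentration of the length of the four-dimensional loop-erased walk: the errors $(\log\log n)/(\log n)^{2/3}$ in Theorem~\ref{theorem:LERWconc} are far from summable, so one cannot condition on exact path-lengths nor freely translate between ``the walk has $m$ steps'' and ``its loop-erasure has $\asymp m(\log m)^{-1/3}$ steps''. I would handle this using a generalisation of the \emph{method of typical times} from \cite{hutchcroft2020logarithmic}: rather than reading off lengths at a deterministic time, one introduces an auxiliary random time at which the loop-erased and non-loop-erased lengths sit close to their typical relationship with high probability, and shows that replacing a deterministic window by this typical one costs at most a $(\log n)^{o(1)}$ factor. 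This device is the source of the subpolylogarithmic errors in the statement and is the most delicate ingredient, in particular because it must be run simultaneously on $X$ and on $Y$ while controlling their interaction uniformly in $x$, for which the Lawler-type one-point-function and Green's-function estimates for the four-dimensional loop-erased walk are essential.
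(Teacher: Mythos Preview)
Your overall plan --- Wilson's algorithm, confinement of $x$ to a box of side $n^{1/2}(\log n)^{1/6+o(1)}$ via typical times, then a hitting-probability estimate --- is sound, but the paper organises the argument differently and more economically. Rather than estimating $\sum_x\mathbf{P}_x\bigl(Y\text{ hits }\gamma^n,\ |\LE(Y^\sigma)|\le n\bigr)$ directly via last-exit decomposition and capacity as you propose, the paper writes
\[
\mathbb{E}|\fB(n)| \le \mathbb{E}\bigl|\{x\in\fB(n):\Gamma(0,x)\nsubseteq\Lambda(r)\}\bigr| + \mathbb{E}\bigl|\{x:\Gamma(0,x)\subseteq\Lambda(r)\}\bigr|,\qquad r=n^{1/2}(\log n)^{1/6+\delta},
\]
and for the second term drops the intrinsic constraint entirely, applying the ready-made \emph{extrinsic} volume bound $\mathbb{E}|\{x:\Gamma(0,x)\subseteq\Lambda(r)\}|\preceq r^4/\log r$ of \cite[Proposition~7.3]{hutchcroft2020logarithmic} (restated as Proposition~\ref{prop:extr_volume}) to obtain $n^2(\log n)^{-1/3+4\delta}$ immediately. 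All of the typical-times machinery is confined to bounding the first term (Proposition~\ref{prop:subsume}), which is shown to be $\preceq_{p,\delta} n^2(\log n)^{-p}$ for every $p$; that proof uses a double mass-transport trick to symmetrise between the two halves $\Gamma(0,0\vee x)$ and $\Gamma(x,0\vee x)$ of the path, a device your confinement step would also need but which your sketch does not mention. Your capacity route is viable in principle but would additionally require an \emph{upper} bound $\mathrm{Cap}(\LE(X)^n)\preceq n(\log n)^{-2/3}$ (the paper only states and uses the lower bound, Proposition~\ref{prop:cap_bound}). Finally, your attribution of the $(\log n)^{-1/3}$ saving to ``keeping the genuine constraint $|\LE(Y^\sigma)|\le n$ rather than only $\sigma\le n(\log n)^{1/3+o(1)}$'' is slightly off: in the paper the saving is exactly the $1/\log r$ in the extrinsic bound, and in your scheme it would come from $\mathrm{Cap}(\gamma^n)$ being of order $n(\log n)^{-2/3}$ rather than $n$ --- in either case the loop-erased-length constraint is used only to achieve the spatial confinement (via typical times converting it to a bound on $\sigma$), not again afterwards.
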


\begin{proposition}\label{prop:extr_containment}
	Let $\fT$ be the uniform spanning tree of $\Z^4$ and let $\delta>0$. Then
	\[
	\pr\left(
  \fB(n) \nsubseteq 
  \Lambda \Bigl(n^{1/2}(\log n)^{1/6+\delta}\Bigr)\right)\preceq_\delta \frac{\log\log n}{(\log n)^{2/3}}
	\]
	for every $n\geq 3$.
\end{proposition}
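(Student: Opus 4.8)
The plan is to reduce the statement to a single application of Theorem~\ref{theorem:LERWconc} to the future of the origin, together with tail bounds on the Euclidean extent of the \emph{past}. Write $R = n^{1/2}(\log n)^{1/6+\delta}$ and let $(\gamma_j)_{j\ge 0}$ denote the future ray $\Gamma(0,\infty)$. Since $\fB(n)$ is a connected subgraph of $\Z^4$ containing the origin, we have $\fB(n)\subseteq\Lambda(R)$ unless some vertex $w$ with $\|w\|_\infty = \lfloor R\rfloor+1$ satisfies $d_\fT(0,w)\le n$. Using that $\fT$ is one-ended, any such $w$ has $w\vee 0 = \gamma_k$ for some $k\le n$, and either $w=\gamma_k$ lies on the first $n$ steps of the ray or $w$ lies in the bush $\fP(\gamma_k)$ hanging off $\gamma_k$ at intrinsic distance $d_\fT(\gamma_k,w)\le n-k$. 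So it suffices to control these two possibilities.

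First I would control the ray. By Wilson's algorithm rooted at infinity $\Gamma(0,\infty)$ has the law of $\LE(X)$ for a simple random walk $X$ from $0$, and its first $n$ steps are visited among the first $\ell_n$ steps of $X$. Combining Theorem~\ref{theorem:LERWconc} (with $\eps=1$, so that $\ell_n\le 2n(\log n)^{1/3}$ off an event of probability $\preceq_\delta \log\log n/(\log n)^{2/3}$) with the Gaussian maximal inequality for $X$, the event $E_\star := \{\Gamma(0,\infty)^{[0,n]}\not\subseteq\Lambda(n^{1/2}(\log n)^{1/6+\delta/2})\}$ satisfies $\pr(E_\star)\preceq_\delta \log\log n/(\log n)^{2/3}$. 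This is the source of the claimed error term. On $E_\star^c$ every ray vertex $\gamma_k$ with $k\le n$ lies in $\Lambda(R/2)$, so a bush at $\gamma_k$ can produce a vertex outside $\Lambda(R)$ only if it reaches Euclidean distance $\ge R/2$ from $\gamma_k$ within intrinsic distance $n-k$.

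Next I would handle the bushes. Conditionally on $\Gamma(0,\infty)$, the bush at $\gamma_k$, considered as a rooted tree together with its embedding relative to $\gamma_k$, should be stochastically dominated by the past $\fP(0)$ of the origin in $\Z^4$, by a Rayleigh-type monotonicity argument applied to Wilson's algorithm (wiring the ray to infinity can only shrink the bush). Combining this domination with the trivial union bound over $k\in\{0,\dots,n\}$ gives
\[
\pr\bigl(\fB(n)\not\subseteq\Lambda(R)\bigr)\;\le\;\pr(E_\star)+\sum_{m=0}^{n}\pr\bigl(\fP(0,m)\not\subseteq\Lambda(0,R/2)\bigr),
\]
so it remains to show the sum on the right is $\preceq_\delta \log\log n/(\log n)^{2/3}$ — and in fact one expects it to be $\mathbf{o}$ of this, with the whole estimate dominated by $\pr(E_\star)$. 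To bound a single term I would again invoke Wilson's algorithm: if some $v$ with $\|v\|\ge R/2$ lies in $\fP(0)$ at intrinsic distance $\le m$, then the first $m$ steps of the loop-erasure of a walk from $v$ travel Euclidean distance $\ge R/2$. Since $R/2$ exceeds the typical displacement $\asymp m^{1/2}(\log m)^{1/6}$ of such a loop-erased walk by a factor $\gg(\log n)^{\delta/2}$ for \emph{every} $m\le n$, this is a large-deviation event; combining the estimates on the intrinsic size of the past from \cite{hutchcroft2020logarithmic} (so that the past reaching intrinsic radius $m$ already costs a factor $\asymp (\log m)^{1/3}/m$) with a genuine tail bound on $\ell_m$ should show each term decays faster than any fixed power of $\log n$ uniformly in $m\le n$, so that the sum is $(\log n)^{O(1)}$ times a superpolynomially small factor and hence negligible.

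The main obstacle is this last step: obtaining a tail bound on the Euclidean extent of the past — equivalently, an upper-tail bound on $\ell_n$ (or lower-tail bound on $\rho_n$) — that decays faster than polynomially in the stretch factor $(\log n)^\delta$. The constant-factor concentration of Theorem~\ref{theorem:LERWconc} is not by itself enough here, and one must go back to the finer results of Lawler and of Lawler--Sun--Wu on the four-dimensional loop-erased walk (or reprove what is needed in the required form). A secondary point needing care is the conditional stochastic domination of the bushes by $\fP(0)$: this should follow from monotonicity under Wilson's algorithm, but must be set up carefully because conditioning on $\Gamma(0,\infty)$ biases the geometry of the bushes upward, so one cannot simply compare to $\fP(\gamma_k)$.
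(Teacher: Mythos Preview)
Your decomposition into the future ray and the bushes is the natural first move, and your treatment of the ray matches the paper's exactly. The gap you flag at the end is, however, real and not repairable along the lines you suggest.

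Concretely: after stochastic domination (which, by the way, gives domination by the $0$-wired component $\fT_0$, not by $\fP(0)$; see Lemma~\ref{lemma:stoch_dom}), the best available bound on a single bush is $\pr(\Rext(\fT_0)\ge R/2)\asymp (\log R)^{1+o(1)}/R^2$ (Theorem~\ref{theorem:ext_bounds}). With $R=n^{1/2}(\log n)^{1/6+\delta}$ your union bound over $n+1$ bushes therefore yields a quantity of order $(\log n)^{2/3-2\delta+o(1)}$, which diverges. Your proposed fix---combining the intrinsic-radius tail $\asymp(\log m)^{1/3}/m$ with a sharper upper tail on $\ell_m$---controls the Euclidean extent of a \emph{single} length-$m$ loop-erased path, but a bush is a tree containing many such paths, and bounding the event $\{\fP(0,m)\nsubseteq\Lambda(R/2)\}$ is essentially the statement you are trying to prove, restricted to the past. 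So the argument is close to circular, and in any case the ``sum over $v$ with $\|v\|\ge R/2$'' you allude to is over infinitely many vertices with no evident way to control it.

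The paper avoids the union bound over bushes entirely. Its two extra ingredients are (i) the annulus-crossing estimate Lemma~\ref{lem:annulus_crossing}, which shows that any crossing of $\Lambda(4r)\setminus\Lambda(r)$ in $\fT$ has length $\ge r^2(\log r)^{-3}$ except with probability $e^{-\Omega((\log r)^2)}$, and (ii) Proposition~\ref{prop:subsume}, proved by the typical-times machinery, which shows that $\mathbb{E}|\{x\in\fB(n):\Gamma(0,x)\nsubseteq\Lambda(r)\}|\preceq_{p,\delta} n^2(\log n)^{-p}$ for any $p$. On the complement of $\mathscr{E}$, an escape of $\fB(n)$ from $\Lambda(8r)$ forces at least $r^2(\log r)^{-3}$ points $y$ in the annulus with $y\in\fB(n)$ and $\mathrm{diam}(\fP(y))\ge 4r$; Markov's inequality on this count, together with (ii), the stochastic domination lemma, and Theorem~\ref{theorem:ext_bounds}, then gives a bound that decays faster than any power of $\log n$. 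The entire $\log\log n/(\log n)^{2/3}$ error thus comes from the ray, exactly as you anticipated---but the mechanism that makes the bush contribution negligible is Proposition~\ref{prop:subsume}, not a per-bush tail bound.
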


Both of these results will be proven using the following  supporting technical proposition, which bounds in expectation the amount of the volume of intrinsic balls which come from paths of atypical diameter.

\begin{proposition} \label{prop:subsume}
	Let $\fT$ be the uniform spanning tree of $\Z^4$, let $\delta>0$ and let $p\geq 1$. 
	Then 
	\[
	\mathbb{E}\,\big\vert\big\{x\in\mathfrak{B}(n):\Gamma(0,x)\nsubseteq  \Lambda\big(n^{1/2}(\log n)^{1/6+\delta}\big)\big\}\big\vert \preceq_{p,\delta} \frac{n^2}{(\log n)^{p}}
	\]
	for every $n\geq 2$.
\end{proposition}

The expected intrinsic volume bound of Proposition \ref{prop:volume_upper} follows immediately from Proposition~\ref{prop:subsume} together with \cite[Proposition 7.3]{hutchcroft2020logarithmic}, which provides a tight upper bound on the number of points connected to the origin inside an \emph{extrinsic} box of a given radius.

\begin{proposition}[\!\!\cite{hutchcroft2020logarithmic}, Proposition 7.3]\label{prop:extr_volume} Let $\fT$ be the uniform spanning tree of $\Z^4$. Then 
	\[
	\mathbb{E}\,\abs{\{x\in\Z^4: \Gamma(0,x)\subseteq \Lambda(r)\}}\preceq \frac{r^4}{\log r} 
	\]
  for every $r\geq 2$.
\end{proposition}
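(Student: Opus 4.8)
The plan is to use Wilson's algorithm rooted at infinity, starting from $0$ and then from $x$: this realises the future $\Gamma(0,\infty)$ as an infinite loop-erased random walk, and then, for each $x\in\Z^4$, realises $\Gamma(x,x\vee 0)$ as the loop-erasure of an independent random walk from $x$ run until it hits $\Gamma(0,\infty)$, so that $\Gamma(0,x)$ is the concatenation of the initial segment of $\Gamma(0,\infty)$ up to $x\vee 0$ with $\Gamma(x,x\vee 0)$. Since $d_\fT(0,x)=d_\fT(0,x\vee 0)+d_\fT(x,x\vee 0)$, on the event $\{x\in\fB(n)\}$ both of these segments have length at most $n$, while if $\Gamma(0,x)$ has extrinsic diameter $D$ then --- splitting at $x\vee 0$ and using the triangle inequality --- at least one of the two segments has extrinsic diameter at least $D/2$. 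I would also record the deterministic containment $\fB(n)\subseteq\Lambda(n)$ (a geodesic of at most $n$ edges in $\fT\subseteq\Z^4$ remains within $\ell^\infty$-distance $n$ of the origin), so that every $x$ contributing to the left-hand side lies in $\Lambda(n)$ and the extrinsic diameter $D$ of $\Gamma(0,x)$ lies in $[R,n]$, where $R:=n^{1/2}(\log n)^{1/6+\delta}$.

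I would then decompose the expectation dyadically over $D\in[R,n]$ --- there are $O(\log n)$ scales --- and, within a scale, over which of the two segments of $\Gamma(0,x)$ is the long one. If the initial segment of $\Gamma(0,\infty)$ is long then, since it has length $\leq n$ and extrinsic diameter $\geq D/2$, so does the full length-$n$ initial segment of $\Gamma(0,\infty)$; this is an event not involving $x$, on which every contributing $x$ lies in $\Lambda(2D)$, so the corresponding contribution is at most $|\Lambda(2D)|\cdot q_D$ where
\[
q_D:=\mathbb{P}\bigl(\text{the length-$n$ initial segment of }\Gamma(0,\infty)\text{ has extrinsic diameter }\geq D/2\bigr).
\]
If instead $\Gamma(x,x\vee 0)$ is long, then re-rooting Wilson's algorithm at $x$ exhibits the future of $x$ as an infinite loop-erased random walk from $x$ whose length-$n$ initial segment has extrinsic diameter $\geq D/2$; by translation invariance this also has probability $q_D$, and summing over the $\leq|\Lambda(2D)|$ candidates $x\in\Lambda(2D)$ gives the same bound. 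Thus everything reduces to showing $\sum_{R\leq D\leq n}|\Lambda(2D)|\,q_D\preceq_{p,\delta}n^2(\log n)^{-p}$, where $|\Lambda(2D)|\preceq D^4$ (this can be sharpened to $\preceq D^4/\log D$ via Proposition~\ref{prop:extr_volume} if needed, but the crude bound suffices with the estimates below).

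To estimate $q_D$, write the length-$n$ initial segment of $\Gamma(0,\infty)$ as $\{X_{\ell_0},\dots,X_{\ell_n}\}\subseteq X[0,\ell_n]$ for the underlying walk $X$; then for any deterministic $m$ its extrinsic diameter is at most $2\max_{i\leq m}\|X_i\|$ on $\{\ell_n\leq m\}$, so $q_D\leq \mathbb{P}(\ell_n\geq m)+\mathbb{P}(\max_{i\leq m}\|X_i\|\geq D/4)$. The second term is at most $\exp(-cD^2/m)$ by the Gaussian displacement bound for simple random walk; choosing $m$ of order $D^2/\log\log n$ makes this smaller than any power of $\log n$, and in the critical regime $D\asymp R$ (where $D^4\asymp n^2(\log n)^{O(1)}$) this super-polylogarithmic gain is exactly what is needed, while for $D$ polynomially larger than $R$ the event is a genuine large deviation and the bound is polynomially small, easily defeating $D^4\leq n^4$. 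The remaining term $\mathbb{P}(\ell_n\geq m)=\mathbb{P}(\rho_m\leq n)$ is the heart of the matter: for $m$ of the above order one has $\mathbb{E}\,\rho_m\asymp m(\log m)^{-1/3}\gg n$ (indeed $\asymp n(\log n)^{2\delta}/\log\log n$ when $D\asymp R$), so this is a lower-tail estimate for $\rho_m$ at a slowly vanishing fraction of its mean, for which the basic concentration estimate of Theorem~\ref{theorem:LERWconc} only gives polylogarithmic decay --- not the super-polylogarithmic decay required to beat the prefactor. \emph{Overcoming this is the main obstacle}, and it is precisely the purpose of the (suitably generalised) method of typical times of \cite{hutchcroft2020logarithmic}: rather than ever committing to exactly $n$ loop-erased steps and contending with the very weakly concentrated time $\ell_n$, one parametrises the argument by a deterministic proxy time scale, works with a correspondingly modified ball, and transfers back at the end; this device is also the source of the harmless subpolylogarithmic slack that appears as $(\log n)^{\pm o(1)}$ elsewhere in the paper, though not in this proposition, where $\delta$ is fixed and $p$ arbitrary. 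Summing the resulting bounds over the $O(\log n)$ dyadic scales (at the cost of an additive constant in $p$) completes the argument.
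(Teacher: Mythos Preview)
Your proposal does not address the stated proposition. The statement asks you to bound $\mathbb{E}\,|\{x\in\Z^4:\Gamma(0,x)\subseteq\Lambda(r)\}|$ by $r^4/\log r$: a count of points whose path to the origin stays \emph{inside} a fixed extrinsic box of radius $r$, with no intrinsic ball $\fB(n)$, no parameter $\delta$, and no $n$ appearing anywhere. Your entire argument is instead aimed at Proposition~\ref{prop:subsume}: you are bounding the number of $x\in\fB(n)$ whose path $\Gamma(0,x)$ has extrinsic diameter \emph{at least} $R=n^{1/2}(\log n)^{1/6+\delta}$, and your target inequality $\sum_{R\leq D\leq n}|\Lambda(2D)|\,q_D\preceq_{p,\delta}n^2(\log n)^{-p}$ is exactly the content of that other proposition. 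You even invoke Proposition~\ref{prop:extr_volume} as a possible sharpening ingredient, which would be circular if this were the statement you were proving.

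For the record, the paper does not prove Proposition~\ref{prop:extr_volume} at all: it is quoted directly from \cite[Proposition~7.3]{hutchcroft2020logarithmic}, so there is no in-paper proof to compare against. And even if your intended target were Proposition~\ref{prop:subsume}, your sketch halts precisely at the obstacle you yourself identify --- the lower tail of $\rho_m$ decays only polylogarithmically via Theorem~\ref{theorem:LERWconc}, which cannot beat the $D^4$ prefactor --- and gestures at typical times without deploying them. The paper's actual proof of Proposition~\ref{prop:subsume} proceeds along quite different lines: it establishes a general transfer inequality \eqref{eq:displacement_split_m} via the $(A,B)$-typical-time machinery and Lemma~\ref{lemma:bad_bound}, then applies it twice in conjunction with the mass-transport principle (swapping the roles of $0$ and $v$) to force both segments of $\Gamma(0,v)$ into the small box simultaneously, rather than decomposing dyadically over extrinsic diameters.
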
 

\begin{proof}[Proof of Proposition \ref{prop:volume_upper}]
	Fix $\delta>0,\,p\geq 1$ and $n\geq 4$. We have trivially that
	\begin{align*}
		\mathbb{E}|\mathfrak{B}(n)|&\leq \mathbb{E}\,\big\vert\{x\in\mathfrak{B}(n):\Gamma(0,x)\not\subset\Lambda(n^{1/2}(\log n)^{1/6+\delta})\} \big\vert+\mathbb{E}\,\abs{\{x\in\Z^d: \Gamma(0,x)\subseteq \Lambda(n^{1/2}(\log n)^{1/6+\delta})\}}.
	\end{align*}
Applying Proposition \ref{prop:subsume} to the first term on the right hand side and Proposition \ref{prop:extr_volume} to the second yields that
	\begin{align*}
	\mathbb{E}|\mathfrak{B}(n)|\preceq_{p,\delta} \frac{n^2}{(\log n)^{1/3-4\delta}}+\frac{n^2}{(\log n)^{p}},
\end{align*}
which implies the claim since $\delta>0$ and $p\geq 1$ were arbitrary.
\end{proof}

The deduction of Proposition \ref{prop:extr_containment} from Proposition~\ref{prop:subsume} requires a more involved argument using further results of \cite{hutchcroft2020logarithmic} and is given after the proof of Proposition \ref{prop:subsume}.

\medskip

To prove Proposition~\ref{prop:subsume} we need to be able to relate balls in the extrinsic metric (i.e.\ the $\ell^\infty$ metric on $\Z^4$) to balls in the intrinsic metric. Intuitively, since paths in the UST are distributed as loop-erased random walks and since length-$n$ loop-erased random walks in $\Z^4$ are typically generated by simple random walks of length roughly $n (\log n)^{1/3}$ \cite{MR1364896}, we expect that intrinsic paths of length $n$ in the UST should have extrinsic diameter concentrated around $n^{1/2} (\log n)^{1/6}$. Unfortunately, however, the concentration estimates that are available for the length of loop-erased random walks are far too weak to directly rule out that most of the volume of the intrinsic ball comes from paths of atypically large diameter.
We circumvent this problem using a generalization of the \textit{typical time} methodology of \cite[Section 8]{hutchcroft2020logarithmic}, originally introduced to prove tail estimates on the extrinsic radius of the past of the origin: we will use typical times to subsume balls in the intrinsic metric by balls of an appropriate radius in the extrinsic metric.

\medskip

\textbf{Typical times.} We  now detail the generalised typical time methodology that we use. Given points $x,y\in\Z^4$ and a simple path $\gamma$ starting at $x$ and ending at $y$,  let $X$ be a random walk started at $x$ and conditioned to hit $y$ and to have loop erasure $\gamma$ when it first hits $y$. Roughly speaking, the typical time $T(\gamma)$ of $\gamma$ is defined to be the typical length of the walk $X$ under this conditional distribution; an important part of the theory is that this length is concentrated around the typical time $T(\gamma)$ under mild conditions on the path $\gamma$. 
Our proofs will apply a slight generalization of this notion, which we now introduce.
Instead of stopping the walk at a single point $y$, we introduce disjoint sets $A,B\subset\Z^d$ and define the \textbf{$(A,B)$-typical time} $T_{A,B}(\gamma)$ of a simple path $\gamma$ starting at $x$, ending when it first hits $A$, and avoiding $B$ to be
\[
T_{A,B}(\gamma):=\bE_x\left[\sum_{i=1}^{\abs{\gamma}} \Big(\ell_i(X^{\tau_A})-\ell_{i-1}(X^{\tau_A})\Big)\wedge \abs{\gamma}\ \Bigg\vert\ \tau_A<\infty, \tau_A<\tau_B, \LE(X^{\tau_A})=\gamma\right],
\]
where $\mathbf{E}_x$ denotes expectation with respect to the law of a simple random walk $X$ on $\Z^4$ started at $X_0=x$, and where the times $\ell_i(X^{\tau_A})$ are from the definition of the loop-erasure  of $X^{\tau_A}$, so that 
\[\tau_A= \ell_{|\gamma|}(X^{\tau_A})=\sum_{i=1}^{|\gamma|} \Big(\ell_i(X^{\tau_A})-\ell_{i-1}(X^{\tau_A})\Big)\]
when $\LE(X^{\tau_A})=\gamma$.
We will use boldface to denote probabilities and expectations taken with respect to the law of a simple random walk throughout the paper, so that $\pb_x$ will denote probability with respect to the law of a simple random walk started at time $0$ at vertex $x$. We remark that for paths $\gamma$ which hit $A$ and avoid $B$ we have that $T_{A,B}(\gamma)=T_{A\cup B,\emptyset}(\gamma)$, where we define $\tau_\emptyset = \infty$, and that the usual typical time as defined in \cite{hutchcroft2020logarithmic} is given by $T(\eta)=T_{\{\eta_n\},\emptyset}(\eta)$ when $\eta$ has length $n$.

\medskip

The following Lemma extends \cite[Lemma 8.2]{hutchcroft2020logarithmic} to $(A,B)$-typical times. The proof is identical to the proof of that lemma and is omitted.
\begin{lemma} \label{lemma:conc_T}
	There exists a constant $C$ such that if  $x\in\Z^4$,  $A,B$ are disjoint subsets of $\Z^4$, and $\gamma$ is a simple path of length $n\geq 0$ from $x$ to $A$ which does not intersect $B$, then
	\[
	\bP_x\left(\lvert\tau_A-T_{A,B}(\gamma)\rvert>\lambda n \ \Big|\ \tau_A<\infty, \tau_A<\tau_B, \LE(X^{\tau_A})=\gamma\right)\leq\frac{C}{\lambda},
	\]
	for every $\lambda\geq1$.
\end{lemma}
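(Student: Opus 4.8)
The plan is to run the proof of \cite[Lemma 8.2]{hutchcroft2020logarithmic} — which is exactly this statement in the special case that $A$ is a single vertex and $B=\emptyset$ — essentially unchanged, checking only that neither passing from a single target vertex to a general set $A$ nor introducing the avoided set $B$ affects the argument. The set $B$ is eliminated immediately: since $\gamma$ ends at $\gamma_n\in A$ and avoids $B$, any walk whose loop-erasure at the hitting time of $A\cup B$ equals $\gamma$ ends at $\gamma_n\in A\setminus B$ and must therefore hit $A$ strictly before $B$, so the conditioning event coincides with $\{\tau_{A\cup B}<\infty,\ \LE(X^{\tau_{A\cup B}})=\gamma\}$ and, as remarked in the text, $T_{A,B}(\gamma)=T_{A\cup B,\emptyset}(\gamma)$; thus we may assume $B=\emptyset$. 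Write $E=\{\tau_A<\infty,\ \LE(X^{\tau_A})=\gamma\}$, work throughout under $\bP_x(\,\cdot\mid E)$, let $n$ be the length of $\gamma$ (the case $n=0$ being trivial), and put $\Delta_i=\ell_i(X^{\tau_A})-\ell_{i-1}(X^{\tau_A})$ for $1\le i\le n$, so that $\tau_A=\sum_{i=1}^n\Delta_i$ and $T_{A,\emptyset}(\gamma)=\bE_x[\widetilde S\mid E]$ with $\widetilde S:=\sum_{i=1}^n(\Delta_i\wedge n)$.

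The engine is the loop decomposition of the loop-erased walk (a standard consequence of the domain Markov property; see \cite{Lawler80} or the proof of \cite[Lemma 8.2]{hutchcroft2020logarithmic}): conditionally on $E$, the walk $X^{\tau_A}$ is a concatenation of independent loops $\mathcal L_0,\dots,\mathcal L_{n-1}$ joined by the edges of $\gamma$, where $\mathcal L_{i-1}$ is a simple random walk loop based at $\gamma_{i-1}$, weighted by $8^{-(\mathrm{length})}$ and conditioned to avoid a fixed set $S_{i-1}$ determined by $\gamma$ and $A$ (one may take $S_{i-1}=\{\gamma_0,\dots,\gamma_{i-2}\}\cup A$), with $\Delta_i=|\mathcal L_{i-1}|+1$. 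From this I would extract the uniform tail bound
\begin{equation}
\label{eq:loop-tail-plan}
\bP_x\!\left(\Delta_i\ge k\mid E\right)\;\preceq\;\frac1k\qquad\text{for all }1\le i\le n,\ k\ge 1,
\end{equation}
as follows: the total $8^{-(\mathrm{length})}$-mass of loops at $\gamma_{i-1}$ avoiding $S_{i-1}$ is the killed Green's function $G_{\mathbb{Z}^4\setminus S_{i-1}}(\gamma_{i-1},\gamma_{i-1})\ge 1$ (the trivial loop is always allowed), whereas the mass of such loops of length $\ge k-1$ is at most $\sum_{m\ge k-1}\bP_0(X_m=0)\preceq 1/k$ by the local limit theorem estimate $\bP_0(X_{2m}=0)\asymp m^{-2}$ on $\mathbb{Z}^4$; dividing gives \eqref{eq:loop-tail-plan} uniformly in $\gamma$ and $A$. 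The only point of care is that the conditioning on avoiding $S_{i-1}$ must be handled through this ratio rather than directly — but it can only \emph{shrink} loops, since it shrinks the Green's function in the denominator while leaving the numerator bound untouched, so no difficulty arises.

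Granting \eqref{eq:loop-tail-plan}, I would finish by writing $\tau_A-T_{A,\emptyset}(\gamma)=\bigl(\widetilde S-\bE_x[\widetilde S\mid E]\bigr)+(\tau_A-\widetilde S)$ and controlling each term at scale $\lambda n$. For the truncated sum, \eqref{eq:loop-tail-plan} gives $\bE_x[(\Delta_i\wedge n)^2\mid E]=\sum_{k=1}^n(2k-1)\bP_x(\Delta_i\ge k\mid E)\preceq n$, hence $\operatorname{Var}_x(\widetilde S\mid E)\preceq n^2$ by independence, and Chebyshev yields $\bP_x(|\widetilde S-\bE_x[\widetilde S\mid E]|>\lambda n\mid E)\preceq\lambda^{-2}\le\lambda^{-1}$. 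For the overshoot $\tau_A-\widetilde S=\sum_{i=1}^n(\Delta_i-n)^+$, I would split each summand at the two scales $n$ and $\lambda n$: the contribution beyond $(1+\lambda)n$ is nonzero only if some $\Delta_i>(1+\lambda)n$, which by \eqref{eq:loop-tail-plan} and a union bound over $i\le n$ has conditional probability $\preceq\lambda^{-1}$; and the remaining part $\sum_i\bigl((\Delta_i-n)^+\wedge\lambda n\bigr)$ has, again by \eqref{eq:loop-tail-plan}, conditional mean $\preceq n\log(1+\lambda)$ and conditional variance $\preceq\lambda n^2$, so Chebyshev (bounding the probability by $1$ when $\lambda$ lies in a bounded range) shows it exceeds $\lambda n/2$ with conditional probability $\preceq\lambda^{-1}$. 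Adding these bounds with $\lambda$ replaced by $\lambda/2$ gives the claim.

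The one thing that genuinely needs care is the uniform tail bound \eqref{eq:loop-tail-plan}: one must verify that the global conditioning on $E$ — especially on the full future loop-erasure equalling $\gamma$ — cannot fatten the tail of an individual loop beyond its unconditioned $1/k$ behaviour, uniformly over all shapes of $\gamma$ and all target sets $A$; the loop decomposition reduces this to the purely local Green's function computation sketched above. Everything else is a soft second-moment argument, and the passage from the single-vertex case of \cite[Lemma 8.2]{hutchcroft2020logarithmic} to general $A$ and $B$ is bookkeeping. The four-dimensional inputs used are precisely that $\mathbb{Z}^4$ is transient (so that the trivial-loop bound $G_{\mathbb{Z}^4\setminus S}(x,x)\ge 1$ suffices from below) and that $\sum_{m\ge k}\bP_0(X_m=0)\asymp 1/k$, the same ingredients underlying \cite{MR1117680,MR1364896} and \cite{hutchcroft2020logarithmic}.
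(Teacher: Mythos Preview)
Your proposal is correct and follows exactly the approach the paper indicates: the paper omits the proof entirely, stating only that it is ``identical to the proof of [Lemma 8.2 of hutchcroft2020logarithmic],'' and your sketch reproduces precisely that argument --- the reduction from $(A,B)$ to $(A\cup B,\emptyset)$, the conditional independence and uniform $1/k$ tail of the loop lengths (which the paper itself invokes elsewhere via \cite[Lemma 5.3]{MR4055195}), and the truncation-plus-Chebyshev conclusion. Nothing further is needed.
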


As explained in detail in \cite[Section 8]{hutchcroft2020logarithmic}, for most paths of interest the typical time $T(\gamma)$ is significantly larger than $|\gamma|$, so that Lemma~\ref{lemma:conc_T} can indeed be thought of as a concentration estimate, justifying the use of the `typical time' terminology. Indeed, when $\gamma$ is a loop-erased random walk of length $n$ its typical time will usually be of order $n (\log n)^{1/3}$. For an arbitrary path $\gamma$ of length $n\geq 1$ the best bounds are of the form
\begin{equation}
\label{eq:typical_time_extremal_bounds}
n \preceq T(\gamma) \preceq n \log (n+1);
\end{equation}
the lower bound is trivial while the upper bound follows by bounding the distribution of the length of the loop $\ell_i(X^{\tau_A})-\ell_{i-1}(X^{\tau_A})$ by that of the length of an unconditioned simple random walk loop in $\Z^4$ (see \cite[Equation (8.6)]{hutchcroft2020logarithmic}).
The upper bound is sharp when $\gamma$ is a straight line, while the lower bound is sharp when $\gamma$ is a space-filling curve.

As in \cite{hutchcroft2020logarithmic}, we bound typical times by a simpler functional that is easier to work with. If $\gamma$ has length $n$, we define $A_i(\gamma)=\sum_{k=1}^n \frac{1}{k}\mathrm{Esc}_k(\gamma^i)^2$, where  given a finite path $\eta$ of length $m$ and an integer $k\geq 1$ the $k$-step \textbf{escape probability} $\mathrm{Esc}_k(\eta)$ is defined by $\mathrm{Esc}_k(\eta)=\bP_{\eta_m}(X^k\cap \eta^{m-1}=\emptyset)$.
We then define \[\widetilde{T}(\gamma):=\sum_{i=0}^{n-1} A_i(\gamma).\]
It follows from the same calculations used to derive the analogous bound for the ordinary hitting time on \cite[Page 69]{hutchcroft2020logarithmic} that 
\begin{equation}
\label{eq:Ttilde_bounds_T}
\widetilde{T}(\gamma) \succeq T_{A,B}(\gamma)
\end{equation}
for every path $\gamma$ and every pair of disjoint sets $A,B \subseteq \Z^4$.
 For a given $0<\delta \leq 1$, we say that a finite path $\gamma$ of length $n\geq 0$ is \textbf{$\delta$-good} if 
\[
\sum_{i=0}^{n-1} A_{i}(\gamma)\mathbbm{1}\big(A_i\geq(\log n)^{1/3+\delta}\big)\leq \delta n,
\]
and say it is \textbf{$\delta$-bad} otherwise. If $\gamma$ is a $\delta$-good path of length $n\geq 2$, then 
\begin{equation}
\label{eq:delta_good_Ttilde_upper}
\widetilde{T}(\gamma)\leq\delta n +\sum_{i=0}^{n-1} A_i(\gamma) \mathbbm{1}\big(A_i<(\log n)^{1/3+\delta}\big)\preceq n(\log n)^{1/3+\delta}.
\end{equation}
We will apply \cite[Lemma 8.5]{hutchcroft2020logarithmic}, which is based on the work of Lawler \cite{MR1364896} (see also \cite{MR4038044}), and states that the loop erasure of a random walk is highly unlikely to be bad.

\begin{lemma}[\!\!\cite{hutchcroft2020logarithmic}, Lemma 8.5] \label{lemma:bad_bound} Let $\delta>0$ and $p\geq 0$ and let $X$ be simple random walk on $\Z^4$. Then
	\[
	\frac{1}{n} \sum_{k=0}^n \mathbf{P}_0\Big(\LE(X^k) \text{ is $\delta$-\text{bad}}\Big)\preceq_{\delta,p} \frac{1}{(\log n)^p},
	\]
	for every $n\geq 2$.
	
\end{lemma}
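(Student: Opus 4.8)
The plan is to prove Lemma~\ref{lemma:bad_bound} by a first-moment (union) bound at a single scale, after a change of variables converting the average over walk-times $k$ into a weighted sum over loop-erased-walk-lengths $m$. The one genuinely hard input will be a super-polylogarithmic tail bound for the functionals $A_i$, which must be extracted from the fine four-dimensional loop-erased random walk estimates of Lawler and of Lawler--Sun--Wu \cite{MR1117680,MR1364896,MR4038044}; this is the reason the excerpt attributes the lemma to that work.

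\textbf{Reduction to a single scale.} Write $\eta=\mathrm{LE}(X)$ for the infinite loop-erased walk. Since $\mathrm{LE}(X^k)$ agrees with $\eta^{\rho_k}=\mathrm{LE}_\infty(X^k)$ outside a short terminal segment with high probability, and modifying a path on a short terminal segment changes its $\delta$-good/$\delta$-bad status only on an event of small probability, it suffices to bound $\sum_{k=0}^n\mathbf{P}_0(\eta^{\rho_k}\text{ is }\delta\text{-bad})$. Since $\rho$ is non-decreasing with $\{k:\rho_k=m\}=\{\ell_m,\dots,\ell_{m+1}-1\}$,
\[
\sum_{k=0}^n \mathbf{P}_0\!\left(\eta^{\rho_k}\text{ is }\delta\text{-bad}\right)=\mathbf{E}_0\!\left[\sum_{m\ge 0}\bigl|[\ell_m,\ell_{m+1})\cap[0,n]\bigr|\;\mathbbm{1}\!\left(\eta^m\text{ is }\delta\text{-bad}\right)\right]\le\mathbf{E}_0\!\left[\sum_{m=0}^{\rho_n}(\ell_{m+1}-\ell_m)\,\mathbbm{1}\!\left(\eta^m\text{ is }\delta\text{-bad}\right)\right].
\]
I would dispose of the loop-length weights by truncation: outside an event of probability $\preceq_p(\log n)^{-p}$ we have $\rho_n\le 2n(\log n)^{-1/3}$ (a sharp upper tail for $\rho_n$, far stronger than the two-sided estimate of Theorem~\ref{theorem:LERWconc} but provable by the same analysis) and no loop among the first $\rho_n$ has length exceeding $n^{10}$ (the expected number of such loops is polynomially small, since a simple random walk loop at a point of $\Z^4$ has length exceeding $s$ with probability $\preceq s^{-1}\log s$); on this event, conditioning on $\eta$ and using $\mathbf{E}_0[(\ell_{m+1}-\ell_m)\wedge n^{10}\mid\eta]\preceq(\log n)^2$ uniformly in $m$ reduces everything to showing
\[
\mathbf{P}_0\!\left(\eta^m\text{ is }\delta\text{-bad}\right)\preceq_{\delta,p}(\log m)^{-p}\qquad\text{for every }m\ge 2.
\]

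\textbf{First moment at a single scale.} Recall $\eta^m$ is $\delta$-bad exactly when $\sum_{i=0}^{m-1}A_i(\eta^m)\,\mathbbm{1}(A_i(\eta^m)\ge(\log m)^{1/3+\delta})>\delta m$, where $A_i(\eta^m)=\sum_{j=1}^m\tfrac1j\mathrm{Esc}_j(\eta^i)^2\le 1+\log m$; hence by Markov's inequality
\[
\mathbf{P}_0\!\left(\eta^m\text{ is }\delta\text{-bad}\right)\le\frac{1+\log m}{\delta m}\sum_{i=0}^{m-1}\mathbf{P}_0\!\left(A_i(\eta^m)\ge(\log m)^{1/3+\delta}\right).
\]
For small indices $i\le i_0:=\exp((\log m)^{1-\delta})$ I would bound each probability by $1$; since $(1+\log m)\,i_0/(\delta m)\preceq_{\delta,p}(\log m)^{-p}$, these terms are harmless. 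For $i>i_0$ the path $\eta^i$ is long enough to be genuinely fractal and the typical value of $A_i(\eta^m)$ is at most $(\log m)^{1/3+2\delta/3}$, a factor $(\log m)^{\delta/3}$ below the threshold: the non-intersection probability of a $j$-step walk with $\eta^i$ is of order $(\log(j\wedge i))^{-1/3}$ (Lawler), so $\sum_{j\le i}\tfrac1j\mathrm{Esc}_j(\eta^i)^2\asymp(\log i)^{1/3}\le(\log m)^{1/3}$, while (using that $\mathrm{Esc}_j$ is non-increasing in $j$) $\sum_{j>i}\tfrac1j\mathrm{Esc}_j(\eta^i)^2\le\mathrm{Esc}_i(\eta^i)^2\log m$ with $\mathrm{Esc}_i(\eta^i)^2$ typically $\asymp(\log i)^{-2/3}\le(\log m)^{-2(1-\delta)/3}$. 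The problem therefore reduces to the moment bounds
\[
\mathbf{E}_0\!\left[A_i(\eta^m)^q\right]\preceq_{q,\delta}(\log m)^{q(1/3+2\delta/3)}\qquad\text{for every }i>i_0\text{ and every }q\ge 1,
\]
since Markov's inequality at exponent $q$ then yields $\mathbf{P}_0(A_i(\eta^m)\ge(\log m)^{1/3+\delta})\preceq_{q,\delta}(\log m)^{-q\delta/3}$, and summing over $i$ and taking $q$ large in terms of $p$ and $\delta$ closes the argument.

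\textbf{The moment bound, and the main obstacle.} By the decomposition above, the displayed moment bound follows once one knows that the ``core'' functional obeys $\mathbf{E}_0[(\sum_{j=1}^i\tfrac1j\mathrm{Esc}_j(\eta^i)^2)^q]\preceq_q(\log i)^{q/3}$ uniformly in $i$, together with the much easier single-escape-probability estimate $\mathbf{E}_0[\mathrm{Esc}_i(\eta^i)^{2q}]\preceq_q(\log i)^{-2q/3}$. Establishing the core moment bound is the substantive step: expanding the $q$-th power, one must control the joint escape probabilities $\mathbf{E}_0[\prod_{r=1}^q\mathrm{Esc}_{j_r}(\eta^i)^2]$ across all choices of scales $j_1,\dots,j_q$, which requires the quasi-multiplicativity and decorrelation-across-scales estimates for the four-dimensional loop-erased walk proved by Lawler--Sun--Wu \cite{MR4038044} (building on \cite{MR1117680,MR1364896}). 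This decorrelation input is the hard part; everything else — the change of variables, the loop-weight truncation, and the single-scale first moment bound — is routine bookkeeping, the only notable subtlety being that the reduction is arranged so as not to rely on the (too weak) two-sided concentration estimate of Theorem~\ref{theorem:LERWconc}.
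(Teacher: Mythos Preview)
The paper does not prove this lemma at all: it is quoted from \cite[Lemma~8.5]{hutchcroft2020logarithmic} with the remark that it ``is based on the work of Lawler, Sun, and Wu \cite{MR4038044}'', and no argument is given here. So there is no proof in this paper to compare your proposal against.

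Judged on its own terms, your outline is broadly sound and, importantly, correctly isolates the substantive input: the moment bounds $\mathbf{E}_0[A_i^q]\preceq_q(\log m)^{q/3}$ coming from the escape-probability estimates of \cite{MR4038044}. Two gaps are worth flagging. First, the reduction from $\LE(X^k)$ to $\eta^{\rho_k}$ is less innocent than you indicate: these paths need not agree even on an initial segment unless there is a cut time shortly before $k$, and the $\delta$-bad status depends both on the path prefix (through the $A_i$'s) and on the total length (through the threshold $(\log|\gamma|)^{1/3+\delta}$ and the upper limit of the sum defining $A_i$); one must invoke Lemma~\ref{lem:Lawlercuttimes} and track these carefully. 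Second, your truncation step asserts $\mathbf{P}(\rho_n>2n(\log n)^{-1/3})\preceq_p(\log n)^{-p}$, which is strictly stronger than the $(\log\log n)/(\log n)^{2/3}$ bound of Theorem~\ref{theorem:LERWconc}; without it the bad-event contribution is only $O((\log\log n)/(\log n)^{2/3})$, insufficient for arbitrary $p$. A cleaner route is to drop the change of variables entirely and prove the pointwise bound $\mathbf{P}_0(\LE(X^k)\text{ is }\delta\text{-bad})\preceq_{\delta,p}(\log k)^{-p}$ directly via your moment argument, after which the averaging is trivial.
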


 We now apply this machinery to prove Proposition~\ref{prop:subsume}.
We will also use the mass-transport principle for $\Z^4$, which states that 
if 
$f:\Z^d\times\Z^d\rightarrow [0,\infty]$ is a diagonally invariant function, meaning that $f(x,y)=f(x+z,y+z)$ for every $x,y,z\in\Z^4$, then 
	$\sum_{x\in\Z^d}f(0,x)=\sum_{x\in\Z^d}f(-x,0)=\sum_{x\in\Z^d}f(x,0)$.

\begin{proof}[Proof of Proposition~\ref{prop:subsume}]	
	\noindent To prove the proposition, we will show that if $\mathscr{A}$ is any set of simple paths $\gamma$ with $\gamma_0=0$ and with length $\abs{\gamma}\leq n$, then
	\begin{multline}
		\sum_{v\in\Z^4} \pr\Bigl(\Gamma(0,v)\in \mathscr{A}, \text{ and }\Gamma(0,0 \wedge v)\subseteq \Lambda(0 \wedge v,r)\Bigr)
		\\ \preceq_{\delta,p}
		\sum_{v\in\Z^4} \pr\Bigl(\Gamma(0,v)\in \mathscr{A},\, \Gamma(0,0 \wedge v)\subseteq \Lambda(0 \wedge v,r)\\ \text{ and }\Gamma(v,0 \wedge v)\subseteq \Lambda(0 \wedge v,n^{1/2}(\log n)^{1/6+\delta}) \Bigr) + n^2 (\log n)^{-p}
		\label{eq:displacement_split_m}
	\end{multline}
	for every $0<\delta\leq 1$, $p\geq 1$, and $n,r\geq 2$. Before proving \eqref{eq:displacement_split_m}, let us first see how it implies the proposition. We must first define some notation. Given a finite path $\gamma=(\gamma_0,\ldots,\gamma_{\abs{\gamma}})$ and a vector $x$, we define $\gamma+x=(\gamma_0+x,\ldots,\gamma_{\abs{\gamma}}+x)$, and $\gamma^\leftarrow=(\gamma_{\abs{\gamma}},\ldots,\gamma_0)$. We extend these operations to sets of paths in the obvious way.
	Fix $\delta\in(0,1]$, $p\geq 1$ and define the two sets of paths 
	\begin{align*}\mathscr{A}_0&=\{\gamma :\gamma \text{ simple},\, \gamma_0 = 0,\, \abs{\gamma}\leq n,\, \gamma\nsubseteq \Lambda(0,\,\ \ \ n^{1/2}(\log n)^{1/6+2\delta})\}, \\	
	\mathscr{A}_0^\prime&=\{\gamma :\gamma \text{ simple},\, \gamma_0 = 0,\, \abs{\gamma}\leq n,\, \gamma\nsubseteq \Lambda(\gamma_{\abs{\gamma}},\,n^{1/2}(\log n)^{1/6+2\delta})\}.
	\end{align*}
 For any $x\in\Z^d$, writing $\mathscr{A}_0(x)$ for the set of paths $\mathscr{A}_0+x$, we observe that for any path $\gamma$ with $\gamma_0=x$, $\gamma_{\abs{\gamma}}=0$, we have that 
 \begin{equation} \label{eq:reverse_sets}
 	\gamma\in \mathscr{A}_0(x) \iff \gamma^\leftarrow\in \mathscr{A}_0^\prime.
 \end{equation}
		With this notation and observation in hand, setting $\mathscr{A}=\mathscr{A}_0$ in \eqref{eq:displacement_split_m} and taking $r\uparrow \infty$, we get
	\begin{align}
		&\mathbb{E}\,\big\vert\{x\in\Z^d:\Gamma(0,x)\in\mathscr{A}_0\} \big\vert=\sum_{v\in\Z^4} \pr\Bigl(\Gamma(0,v)\in \mathscr{A}_0\Bigr)
    \nonumber\\ 
		&\hspace{2cm}\preceq_{\delta,p}
		\sum_{v\in\Z^4} \pr\Bigl(\Gamma(0,v)\in \mathscr{A}_0(0) \text{ and }\Gamma(v,0 \wedge v)\subseteq \Lambda(0 \wedge v,n^{1/2}(\log n)^{1/6+\delta}) \Bigr) + n^2 (\log n)^{-p}
		\nonumber\\
		&\hspace{2cm}=_{\phantom{\delta,p}}
		\sum_{v\in\Z^4} \pr\Bigl(\Gamma(v,0)\in \mathscr{A}_0(v) \text{ and }\Gamma(0,0 \wedge v)\subseteq \Lambda(0 \wedge v,n^{1/2}(\log n)^{1/6+\delta}) \Bigr) + n^2 (\log n)^{-p}
		\nonumber\\
		&\hspace{2cm}=_{\phantom{\delta,p}}
		\sum_{v\in\Z^4} \pr\Bigl(\Gamma(0,v)\in \mathscr{A}_0^\prime \text{ and }\Gamma(0,0 \wedge v)\subseteq \Lambda(0 \wedge v,n^{1/2}(\log n)^{1/6+\delta}) \Bigr) + n^2 (\log n)^{-p}
	\end{align}
	for every $0<\delta \leq 1$, $p\geq 1$, and $n \geq 2$, where the second equality follows by an application of the mass-transport principle to exchange the roles of $0$ and $v$, and the third equality follows by \eqref{eq:reverse_sets}. Applying \eqref{eq:displacement_split_m} a second time with $\mathscr{A}= \mathscr{A}_0^\prime$ then yields that
	\begin{multline*}
		\mathbb{E}\,\big\vert\{x\in\Z^d:\Gamma(0,x)\in\mathscr{A}_0\} \big\vert \preceq_{\delta ,p}
		\sum_{v\in\Z^4} \pr\Bigl(\Gamma(0,v)\in \mathscr{A}_0^\prime,\, \\ \text{ and } \Gamma(0,0 \wedge v), \Gamma(v,0 \wedge v)\subseteq \Lambda(0 \wedge v,n^{1/2}(\log n)^{1/6+\delta}) \Bigr) + n^2 (\log n)^{-p},
	\end{multline*}
	and hence, applying the mass-transport principle a second time, we get
	\begin{align*}
		&\mathbb{E}\,\big\vert\{x\in\Z^d:\Gamma(0,x)\in\mathscr{A}_0\} \big\vert \phantom{\sum_{v\in\Z^4}}\\
		&\hspace{1cm}\preceq_{\delta ,p}
		\sum_{v\in\Z^4} \pr\Bigl(\Gamma(0,v)\in \mathscr{A}_0, \text{ and } \Gamma(0,0 \wedge v), \Gamma(v,0 \wedge v)\subseteq \Lambda(0 \wedge v,n^{1/2}(\log n)^{1/6+\delta}) \Bigr) + n^2 (\log n)^{-p}\\
		&\hspace{1cm}\preceq_{\delta\phantom{,p}}
		\sum_{v\in\Z^4} \pr\Bigl(\Gamma(0,v)\in \mathscr{A}_0, \text{ and } \Gamma(0,v)\subseteq \Lambda(2n^{1/2}(\log n)^{1/6+\delta}) \Bigr) + n^2 (\log n)^{-p}.
	\end{align*}
	If $n$ is sufficiently large that $(\log n)^\delta>2$ then the first term is zero and the claim follows.

	It remains to prove \eqref{eq:displacement_split_m}. Fix $0<\delta \leq 1,p\geq 1$ and $n,r \geq 2$. Let $\eta$ be the future of the origin in $\fT$ and write $\pr^\eta$ and $\mathbb{E}^\eta$ for probabilities and expectations taken with respect to the conditional law of $\fT$ given $\eta$. 
	Let $\mathcal{I}=\{i\in\{0,\ldots,n\}:\eta[0,i]\subseteq\Lambda(\eta_i,r)\}$, and for any $i\geq 0$ define the restriction $\mathscr{A}\vert_{x,\eta}^i$ to be the set of finite simple paths
	\[
	\mathscr{A}\vert_{x,\eta}^i = \{\gamma:\gamma_0=\eta_i,\, \gamma[1,\abs{\gamma}]\cap\eta=\emptyset,\,\gamma_{\abs{\gamma}}=x,\,\eta[0,i]\oplus\gamma[1,\abs{\gamma}]\in\mathscr{A}\},
	\]
	where for any two finite paths $\gamma$, $\gamma^\prime$, we have $(\gamma_0,\ldots,\gamma_{\abs{\gamma}})\oplus(\gamma_0^\prime,\ldots,\gamma_{\abs{\gamma^\prime}}^\prime)=(\gamma_0,\ldots,\gamma_{\abs{\gamma}},\gamma_0^\prime,\ldots,\gamma_{\abs{\gamma^\prime}}^\prime)$. In other words, $\mathscr{A}\vert_{x,\eta}^i$ is the set of simple paths (including paths of just a single vertex) beginning at $\eta_i$, avoiding the other points of $\eta$, and which when concatenated to $\eta[0,i-1]$ yield a path in $\mathscr{A}$ ending at $x$.
	
	For each $v\in\Z^d$ we can sample from the conditional distribution of the path in $\fT$ connecting $v$ to $\eta$ using Wilson's algorithm by starting a random walk $X$ at $v$ and loop erasing it when it first hits $\eta$. When sampling the path in this manner we have that the event $\{\Gamma(0,v)\in\mathscr{A} $ and $\Gamma(0,0\wedge v) \subseteq \Lambda(0\wedge v, r)\}$  occurs if and only if the union of disjoint events
	\[
	\bigcup_{i\in\mathcal{I}}\{\tau_i<\tau_{i}^c,\,\LE(X^{\tau_i})^\leftarrow\in\mathscr{A}\vert^i_{v,\eta}\}
	\]
	occurs, where we write $\tau_i$ for the hitting time of $\eta_i$ and write $\tau_{i}^c$ for the hitting time of $\eta\setminus \{\eta_i\}$, so that 
	\begin{equation}
		\label{eq:B(0,n)_SRW_expansion}
		\sum_{v\in\Z^4} \pr^\eta\Bigl(\Gamma(0,v)\in \mathscr{A}, \text{ and }\Gamma(0,0 \wedge v)\subseteq \Lambda(0 \wedge v,r)\Bigr)=  \sum_{i\in\mathcal{I}}\sum_{v\in \Z^4} \mathbf{P}_v\bigl(\tau_i<\tau_{i}^c,\,\LE(X^{\tau_i})^\leftarrow\in\mathscr{A}\vert^i_{v,\eta}\bigr).
	\end{equation}
	We remark that the probabilities on the right hand side of \eqref{eq:B(0,n)_SRW_expansion} are themselves random variables given that $\tau_i,\, \tau_i^c$ and $\mathscr{A}\vert^i_{v,\eta}$ depend on $\eta$. (The law of the simple random walk $\bP_v$ does not depend on $\eta$ and, since there is no possible ambiguity that $\mathbf{P}$ could denote expectation over the UST, we have chosen to made the dependence implicit.)
	
	Temporarily fixing $i\in\mathcal{I}$, we analyze the inner summation on the right hand side of \eqref{eq:B(0,n)_SRW_expansion} using the union bound
	\begin{multline} \label{eq:bad_split}
		\sum_{v\in\Z^4} \mathbf{P}_v\bigl(\tau_i<\tau_{i}^c,\,\LE(X^{\tau_i})^\leftarrow\in\mathscr{A}\vert^i_{v,\eta}\bigr) \leq  \sum_{v\in\Z^4} \mathbf{P}_v(\tau_i<\tau_{i}^c,\,\LE(X^{\tau_i})^\leftarrow\in\mathscr{A}\vert^i_{v,\eta},\, \LE(X^{\tau_i}) \text{ $\delta$-good}) \\+\sum_{v\in\Z^4} \mathbf{P}_v(
		\tau_i<\tau_{i}^c,\,\LE(X^{\tau_i})^\leftarrow\in\mathscr{A}\vert^i_{v,\eta},\, \LE(X^{\tau_i}) \text{ $\delta$-bad}).
	\end{multline}
	If $\LE(X^{\tau_i})$ is $\delta$-good then we have by \eqref{eq:Ttilde_bounds_T} and \eqref{eq:delta_good_Ttilde_upper} that $T_i:=T_{\eta_i,\eta\setminus\{\eta_i\}}(|\LE(X^{\tau_i})|) \leq C_1 n (\log n)^{1/3+\delta}$ for some universal constant $C_1$, and hence that 
	\begin{align}
		&\mathbf{P}_v(\tau_i<\tau_{i}^c,\, \LE(X^{\tau_i})^\leftarrow\in\mathscr{A}\vert^i_{v,\eta},\, \LE(X^{\tau_i}) \text{ $\delta$-good})
		\nonumber\\
		&\hspace{2cm}\leq \mathbf{P}_v(\tau_i<\tau_{i}^c,\, \LE(X^{\tau_i})^\leftarrow\in\mathscr{A}\vert^i_{v,\eta}, \,T_i
		\leq C_1 n(\log n)^{1/3+\delta})
		\nonumber\\
		&\hspace{2cm}\leq \mathbf{P}_v\big(\tau_i<\tau_{i}^c,\, \LE(X^{\tau_i})^\leftarrow\in\mathscr{A}\vert^i_{v,\eta},\, \abs{T_{i}-\tau_{i}}\geq\lambda n\big)
		\nonumber\\
		&\hspace{5cm}+\mathbf{P}_v\big(\tau_i<\tau_{i}^c,\, \LE(X^{\tau_i})^\leftarrow\in\mathscr{A}\vert^i_{v,\eta},\, \tau_i\leq C_1 n(\log n)^{1/3+\delta}+\lambda n\big)
		\label{eq:union_bound_step_2}
	\end{align}
	for every $\lambda >0$.
	The first term on the right hand side of \eqref{eq:union_bound_step_2} is bounded above by $C_2\lambda^{-1} \mathbf{P}_v(\tau_i<\tau_{i}^c,\, \LE(X^{\tau_i})^\leftarrow\in\mathscr{A}\vert^i_{v,\eta})$ for some universal constant $C_2$ by Lemma \ref{lemma:conc_T}, so that taking $\lambda=2C_2$, substituting \eqref{eq:union_bound_step_2} into \eqref{eq:bad_split}  and rearranging 
	yields that
	\begin{multline} \label{eq:combined_2}
		\sum_{v\in\Z^4} \mathbf{P}_v\bigl(\tau_i<\tau_{i}^c,\,\LE(X^{\tau_i})^\leftarrow\in\mathscr{A}\vert^i_{v,\eta}\bigr) 
		\leq 2 \sum_{v\in\Z^4} \mathbf{P}_v(\tau_i<\tau_{i}^c,\,\LE(X^{\tau_i})^\leftarrow\in\mathscr{A}\vert^i_{v,\eta},\, \tau_i \leq C_3n (\log n)^{1/3+\delta}) \\+2\sum_{v\in\Z^4} \mathbf{P}_v(\tau_i<\tau_{i}^c,\,\LE(X^{\tau_i})^\leftarrow\in\mathscr{A}\vert^i_{v,\eta},\, \LE(X^{\tau_i}) \text{ $\delta$-bad}),
	\end{multline}
	where $C_3=C_3(\delta)$ has been chosen so that $C_1 n (\log n)^{1/3+\delta} +2C_2 n\leq C_3 n(\log n)^{1/3+\delta}$ for every $n\geq 2$.

	\medskip

	We next bound the second term on the right hand side of \eqref{eq:combined_2}.
	Since the typical time of a length $n$ path is always $O(n \log n)$, it follows by the same argument used to derive \eqref{eq:combined_2} from \eqref{eq:union_bound_step_2} that there exists a constant $C_4$ such that
	\begin{multline*}
		\sum_{v\in\Z^4} \mathbf{P}_v(\tau_i<\tau_{i}^c,\,\LE(X^{\tau_i})^\leftarrow\in\mathscr{A}\vert^i_{v,\eta},\, \LE(X^{\tau_i}) \text{ $\delta$-bad}) \\
		\leq 2\sum_{v\in\Z^4} \mathbf{P}_v(\tau_i<\tau_{i}^c,\,\LE(X^{\tau_i})^\leftarrow\in\mathscr{A}\vert^i_{v,\eta},\, \tau_i \leq C_4 n \log n).
	\end{multline*}
Thus, taking a union bound over the possible values of $\tau_i$, we have that
	\begin{align*}
		\sum_{v\in\Z^4} \mathbf{P}_v(\tau_i<\tau_{i}^c,\,\LE(X^{\tau_i})^\leftarrow\in\mathscr{A}\vert^i_{v,\eta},\, \LE(X^{\tau_i}) \text{ $\delta$-bad})
		&\leq 
		2 \sum_{v\in\Z^4}  \sum_{k=0}^{\ceil{C_4n\log n}}
		\mathbf{P}_v(X_k=\eta_i, \LE(X^k) \text{ $\delta$-bad})
    \end{align*}
and hence by symmetry that
    \begin{multline}
\sum_{v\in\Z^4} \mathbf{P}_v(\tau_i<\tau_{i}^c,\,\LE(X^{\tau_i})^\leftarrow\in\mathscr{A}\vert^i_{v,\eta},\, \LE(X^{\tau_i}) \text{ $\delta$-bad})
    \\\leq 2 \sum_{v\in\Z^4}  \sum_{k=0}^{\ceil{C_4n\log n}}
    \mathbf{P}_{\eta_i}(X_k=v, \LE(X^k) \text{ $\delta$-bad})
    =2  \sum_{k=0}^{\ceil{C_4n\log n}}
    \mathbf{P}_{\eta_i}( \LE(X^k) \text{ $\delta$-bad}) \preceq_{\delta,p} n(\log n)^{1-p}\label{eq:bad_ineq}
	\end{multline}
	for every $n\geq 2$. 
	
	\medskip
	
	Next, we consider the first term on the right hand side of \eqref{eq:combined_2}. We write $B=\{\tau_i<\tau_{i}^c,\,\LE(X^{\tau_i})^\leftarrow\in\mathscr{A}\vert^i_{v,\eta},\, \tau_i \leq C_3n (\log n)^{1/3+\delta}\}$ and wish to estimate $\sum_{v\in\Z^4} \bP_v(B)$.
	To do this, we split the event $B$ according to how far the walk travels before hitting $\eta_i$, yielding the union bound
	\begin{multline}
		\mathbf{P}_v(B) \leq \mathbf{P}_v\big(B, \sup_{0\leq m \leq\tau_i} \norm{X_m-\eta_i}\geq n^{1/2} (\log n)^{1/6+\delta}\big)\\
		+\mathbf{P}_v\big(B, \sup_{0\leq m \leq\tau_i} \norm{X_m-\eta_i}< n^{1/2} (\log n)^{1/6+\delta}\big).
		\label{eq:big_small_displacement_union_bound}
	\end{multline}
	For the first of these terms, we bound
	\begin{multline*}
		\mathbf{P}_v\big(B, \sup_{m\leq\tau_i} \norm{X_m-\eta_i}\geq n^{1/2} (\log n)^{1/6+\delta}\big)\\
		\leq  \sum_{k=0}^{\lceil C_3n(\log n)^{1/3+\delta}\rceil}\mathbf{P}_v(X_k=\eta_i,\sup_{m\leq k}\norm{X_m-\eta_i} \geq n^{1/2} (\log n)^{1/6+\delta}).
	\end{multline*}
	Summing over $v$ and using time-reversal gives that
	\begin{align}
		&\sum_{v\in \Z^4} \mathbf{P}_v\big(B, \sup_{m\leq\tau_i} \norm{X_m-\eta_i}\geq n^{1/2} (\log n)^{1/6+\delta}\big)\nonumber\\ 
		&\hspace{4cm}\leq \sum_{k=0}^{\lceil C_3n(\log n)^{1/3+\delta}\rceil}\mathbf{P}_{\eta_i}\Biggl(\sup_{m\leq k}\norm{X_m-\eta_i} \geq n^{1/2} (\log n)^{1/6+\delta}\Biggr) \nonumber\\
		&\hspace{4cm}\preceq n (\log n)^{1/3+\delta} \mathbf{P}_0 \Biggl(\sup_{m\leq \lceil C_3n(\log n)^{1/3+\delta}\rceil}\norm{X_m} \geq n^{1/2} (\log n)^{1/6+\delta}\Biggr)
		\nonumber\\
		&\hspace{4cm}\preceq n (\log n)^{1/3+\delta} e^{-c_1 (\log n)^{\delta}} \preceq_{\delta,p} n (\log n)^{-p}
		\label{eq:big_displacement}
	\end{align}
	for some constant $c_1>0$, 
	where the first inequality in the last line follows by e.g.\ the maximal version of Azuma-Hoeffding \cite[Section 2]{McDiarmid1998}.
	
	\medskip
	
	Substituting the estimates  \eqref{eq:bad_ineq} and \eqref{eq:big_displacement}  into \eqref{eq:combined_2} in light of \eqref{eq:big_small_displacement_union_bound} yields that there exists a constant $C_{\delta,p}$ such that
	\begin{multline} \label{eq:combined_3}
		\sum_{v\in\Z^4} \mathbf{P}_v(\tau_i<\tau_{i}^c,\,\LE(X^{\tau_i})^\leftarrow\in\mathscr{A}\vert^i_{v,\eta})\\ 
		\preceq \sum_{v\in\Z^4}  \mathbf{P}_v\big(\tau_i<\tau_{i}^c,\,\LE(X^{\tau_i})^\leftarrow\in\mathscr{A}\vert^i_{v,\eta},\, \sup_{m\leq\tau_i} \norm{X_m-\eta_i}\leq n^{1/2} (\log n)^{1/6+\delta}\big) +C_{\delta,p} n(\log n)^{-p}.
	\end{multline}
	Now $\LE(X^{\tau_i})\subseteq (X_m)_{m\leq\tau_i}$, and so applying Wilson's algorithm, we have
	\begin{multline*}
		\mathbf{P}_v\big(\tau_i<\tau_{i}^c,\,\LE(X^{\tau_i})^\leftarrow\in\mathscr{A}\vert^i_{v,\eta} \sup_{m\leq\tau_i} \norm{X_m-\eta_i}\leq n^{1/2} (\log n)^{1/6+\delta}\big)\preceq\\  
		\P^\eta(0\wedge v=\eta_i, \Gamma(0\wedge v,v)\in\mathscr{A}\vert^i_{v,\eta},\,\text{ and }\Gamma(v,0 \wedge v)\subseteq \Lambda(0\wedge v,n^{1/2}(\log n)^{1/6+\delta})).
	\end{multline*}
	Substituting this inequality into \eqref{eq:combined_3} and summing over $i\in\mathcal{I}$ yields
	\begin{multline*}
		\sum_{i\in\mathcal{I}}\sum_{v\in\Z^4} \mathbf{P}_v(\tau_i<\tau_{i}^c,\,\LE(X^{\tau_i})^\leftarrow\in\mathscr{A}\vert^i_{v,\eta})
		\preceq\\
		\sum_{v\in\Z^4} \P^\eta(\Gamma(0,v)\in\mathscr{A},\, \Gamma(0,0 \wedge v)\subseteq \Lambda(r)\text{ and }\Gamma(v,0 \wedge v)\subseteq \Lambda(0\wedge v,n^{1/2}(\log n)^{1/6+\delta}))+C_{\delta,p} n^2(\log n)^{-p},
	\end{multline*}
	since $\abs{\mathcal{I}}\leq n+1$. Substituting this inequality into \eqref{eq:B(0,n)_SRW_expansion} and taking expectations over $\eta$ yields the claimed inequality \eqref{eq:displacement_split_m}.
\end{proof}

\textbf{Containment of balls.} We now turn our attention to the proof of Proposition \ref{prop:extr_containment}. We begin by showing that it is very unlikely for $\fT$ to include a crossing of an annulus that it shorter than it should be by a large (i.e.\ non-sharp) polylogarithmic factor. We write $\partial \Lambda(r)$ for the set of vertices in $\Z^4$ with $\|x\|_\infty=r$.

\begin{lemma}
\label{lem:annulus_crossing}
Let $\fT$ be the uniform spanning tree of $\Z^4$ and for each $r,n\geq 1$ let $\mathscr{E}(r,n)$ be the event that there exists a path in $\fT$ from $\partial \Lambda(r)$ to $\partial \Lambda(4r)$ that has length at most $n$. Then
\[
\P\left(\mathscr{E}\left(r,\lceil r^2(\log r)^{-3}\rceil \right)\right) = \exp\left[-\Omega((\log r)^2)\right]
\]
as $r\to \infty$.
\end{lemma}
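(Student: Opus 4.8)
The plan is to use the spatial Markov property of the UST to reduce $\P(\mathscr{E}(r,n))$ to a quasi-independent product of single-scale estimates, each controlling the probability that a loop-erased random walk crosses a thin annulus anomalously fast. Fix a small constant $\eps>0$, set $K=\lceil \eps\log r\rceil$, and subdivide $\Lambda(4r)\setminus\Lambda(r)$ into $K$ concentric sub-annuli $A_0,\dots,A_{K-1}$ of equal Euclidean width $w:=3r/K\asymp r/\log r$. If $\mathscr{E}(r,n)$ occurs with $n=\lceil r^2(\log r)^{-3}\rceil$, the witnessing path traverses every $A_j$, and since it has at most $n$ edges, at least $K/2$ of the sub-annuli are crossed by $\fT$ using at most $n':=4n/K\asymp r^2(\log r)^{-4}$ edges. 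Conditioning on $\fT$ outside the (open) sub-annuli together with the partition of their boundaries induced by the external connectivity, the restrictions of $\fT$ to distinct sub-annuli become conditionally independent; so if $Q$ is a deterministic upper bound on the conditional probability that a given sub-annulus is crossed by the corresponding restricted UST using at most $n'$ edges, then a union bound over the $\binom{K}{\lceil K/2\rceil}\le 2^K$ choices of which sub-annuli are crossed fast gives $\P(\mathscr{E}(r,n))\le 2^K Q^{K/2}$. Since $K\asymp \log r$, it suffices to prove $Q\le r^{-c}$ for some $c>0$, which then yields $\P(\mathscr{E}(r,n))\le 2^{O(\log r)}r^{-c\,\Omega(\log r)}=\exp[-\Omega((\log r)^2)]$.

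To bound $Q$, reveal inside a given sub-annulus the future forest of its outer boundary via Wilson's algorithm in the appropriate conditioned domain; a fast crossing then forces the loop-erased walk from some vertex $v$ of the inner boundary to reach the outer boundary (Euclidean distance $\ge w$) within $n'$ steps, and since the inner boundary has $\asymp r^3$ vertices it is enough to bound this probability, for each fixed $v$ and uniformly in the boundary data, by $r^{-3-c}$. Using that this event equals $\{|\LE_{\Lambda(v,w)}(v\to\partial)|\le n'\}$ for the loop-erased walk run to the first exit of $\Lambda(v,w)$, we split according to whether the underlying walk exits $\Lambda(v,w)$ within $L\asymp n'\log n'\asymp r^2(\log r)^{-3}$ steps. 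On the first event the walk crosses a Euclidean distance $\ge w$ in at most $L$ steps, which by the maximal Azuma--Hoeffding inequality has probability $\exp[-\Omega(w^2/L)]=\exp[-\Omega((\log r)^2/K)]=r^{-\Omega(1)}$ — this is the origin of the exponent $(\log r)^2$, and taking $\eps$ small makes the implicit constant exceed $3+c$. On the second event $\LE_{\Lambda(v,w)}(v\to\partial)$ has length at most $n'$, which is smaller than its typical order $w^2(\log w)^{-1/3}$ by a poly-logarithmic factor $\asymp(\log r)^{-5/3}$, and the whole difficulty is concentrated here.

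The main obstacle is thus to rule out an anomalously short loop-erased crossing with probability polynomially small in $r$. The concentration estimate in Theorem~\ref{theorem:LERWconc}, and likewise the typical-time concentration of Lemma~\ref{lemma:conc_T} (which decays only like $\lambda^{-1}$), are too weak: the number of walk steps producing a prescribed short loop-erasure genuinely has only a polynomial upper tail, so this cannot be reduced to a statement about the walk alone. One must instead argue directly with the loop-erased walk — the natural route is to bound $|\LE_{\Lambda(v,w)}(v\to\partial)|$ below by the capacity of the crossing (any $N$-point path in $\Z^4$ has capacity $O(N)$) and invoke the four-dimensional loop-erased walk capacity estimates of \cite{hutchcroft2020logarithmic,MR3945751}; alternatively, lower bound the loop-erasure length by a multi-scale count of loop-erased points in dyadic sub-shells, using Lawler's escape-probability bounds together with quasi-independence between well-separated scales to obtain the required polynomial tail. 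A final routine point is that these loop-erased walk estimates must be applied in the conditioned annular domains uniformly in the boundary identifications, which follows by a standard domain-comparison argument. Feeding the resulting bound $Q\le r^{-c}$ into the quasi-independent product over $\asymp\log r$ scales completes the proof.
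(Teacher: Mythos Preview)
Your multi-scale decomposition is far more elaborate than what the paper does, and it leaves a genuine gap precisely where you say ``the whole difficulty is concentrated here'': the lower-tail bound on the length of the loop-erased crossing. You need that the loop-erasure of a walk run until it exits a box of radius $w$ has length at most $n'\asymp w^2(\log w)^{-5/3}$ with probability $\le w^{-3-c}$, i.e.\ a \emph{polynomially} small probability of the LERW being polylogarithmically shorter than typical. Neither Theorem~\ref{theorem:LERWconc} nor the typical-time machinery gives anything close to this, and your suggested workarounds (bounding length below by capacity, or a multi-scale count of loop-erased points with quasi-independence between scales) are not carried out and would themselves require substantial new input. The domain-comparison issue for the conditioned annular laws is a secondary concern, but also not routine.

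The paper sidesteps this entirely by reformulating the event so that no lower-tail bound on LERW length is needed. If a short path in $\fT$ crosses $\Lambda(4r)\setminus\Lambda(r)$, then one of its endpoints has a \emph{future} (an infinite loop-erased random walk) whose first $n$ steps travel Euclidean distance $\geq r$. A union bound over the $O(r^3)$ boundary vertices and the $n$ possible indices reduces the problem to
\[
\bP_0\Bigl(\max_{0\le k\le n}\|\LE(X)_k\|_\infty\ge r\Bigr).
\]
This is an \emph{upper}-tail question for LERW displacement, which is easy: conditionally on $\LE(X)$ the increments $\ell_{i+1}-\ell_i$ have a uniform $m^{-2}$ tail, so by the weak-$L^1$ triangle inequality $\bP(\ell_n\ge m\mid\LE(X))\preceq n\log n/m$. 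Taking $m=Cn\log n$ gives probability $\ge 1/2$ that $\ell_n\le Cn\log n$, and on this event the LERW displacement is bounded by the simple random walk displacement over $Cn\log n$ steps, which Azuma--Hoeffding controls by $\exp[-\Omega(r^2/(n\log n))]=\exp[-\Omega((\log r)^2)]$. The prefactor $r^3 n$ is $\exp[o((\log r)^2)]$ and is absorbed. The key point you missed is that by looking at the future rather than at an arbitrary tree path, one converts ``short path crosses far'' into ``LERW goes far in few steps'', which only needs an upper bound on $\ell_n$ rather than a lower bound on $\rho_n$.
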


\begin{proof}[Proof of Lemma~\ref{lem:annulus_crossing}]
Fix $r\geq 2$, let $n=\lceil r^2(\log r)^{-3}\rceil $, and write $\mathscr{E}=\mathscr{E}(r,n)$. If $\mathscr{E}$ holds, there must exist a pair of points $x\in \partial \Lambda(r)$ and $y\in \partial \Lambda(4r)$ such that the path connecting $x$ and $y$ in $\fT$ is contained in the box $\Lambda(4r)$ and has length at most $n$. Considering separately the case that $x\wedge y$ belongs to $\Lambda(2r)$ or not yields the union bound
\begin{align*}
\P(\mathscr{E})&\leq \sum_{y\in \partial \Lambda(4r)} \sum_{z\in \Lambda(2r)} \P\left(z \in \Gamma(y,\infty), |\Gamma(y,z)|\leq n\right) + \sum_{x\in \partial \Lambda(r)} \sum_{z\in \Lambda(4r)\setminus \Lambda(2r)}\P\left(z \in \Gamma(x,\infty), |\Gamma(x,z)|\leq n\right),
\end{align*}
and using Wilson's algorithm to convert this into a loop-erased random walk quantity yields that
\begin{align}
\P(\mathscr{E})
&\leq\sum_{y\in \partial \Lambda(4r)} \sum_{z\in \Lambda(2r)} \sum_{k=0}^n \bP_y\left(\LE(X)_k=z\right) + \sum_{x\in \partial \Lambda(r)} \sum_{z\in \Lambda(4r)\setminus \Lambda(2r)}\sum_{k=0}^n\bP_x\left(\LE(X)_k=z\right)\nonumber\\
&= \sum_{y\in \partial \Lambda(4r)} \sum_{k=0}^n \bP_y\left(\LE(X)_k\in \Lambda(2r)\right) + \sum_{x\in \partial \Lambda(r)} \sum_{k=0}^n\bP_x\left(\LE(X)_k\in \Lambda(4r)\setminus \Lambda(2r) \right)\nonumber\\
&\preceq r^3 n \bP_0\left(\max_{0\leq k\leq n}\|\LE(X)_k\|_\infty\geq r \right).
\label{eq:annulus_crossing_union}
\end{align}
We will bound this probability using the weak $L^1$ method as introduced in \cite[Section 6.2]{hutchcroft2020logarithmic}, which can be thought of as a simple special case of the typical time theory.
Conditional on the loop-erased random walk $\LE(X)$, we have as in \cite[Lemma 5.3]{MR4055195} that the sequence of random variables $(\ell_{i+1}(X)-\ell_i(X))_{i\geq 0}$ are conditionally independent and satisfy
\[
\bP_0(\ell_{i+1}(X)-\ell_i(X) = m \mid \LE(X)) \leq p_{m-1}(0,0) \preceq \frac{1}{m^2}
\]
for every $m\geq 1$, and it follows from Vershynin's weak triangle inequality for the weak $L^1$ norm \cite{VershyninweakL1} as explained in \cite[Section 6.2]{hutchcroft2020logarithmic} that
\[
\bP_0(\ell_{n}(X) \geq m \mid \LE(X)) \preceq \frac{n \log n}{m}
\]
for every $n\geq 2$ and $m\geq 1$. As such, there exists a constant $C$ such that
\begin{align*}
\bP_0\left(\max_{0\leq k\leq n}\|\LE(X)_k\|_\infty\geq r \right)
&\leq 2 \bP_0\left(\max_{0\leq k\leq n}\|\LE(X)_k\|_\infty\geq r,\, \ell_n(X) \leq Cn\log n \right)\\
&\leq 2 \bP_0\left(\max_{0\leq i\leq Cn\log n}\|X_i\|_\infty\geq r \right)
\\ &\preceq \exp\left[-\Omega\left(\frac{r^2}{n \log n}\right)\right]
\preceq \exp\left[-\Omega\left((\log r)^2\right)\right].
\end{align*}
where we have used the maximal version of Azuma-Hoeffding in the last line \cite[Section 2]{McDiarmid1998}. The claim follows by substituting this estimate into \eqref{eq:annulus_crossing_union} and using that $r^3n =r^{O(1)}=\exp[o((\log r)^2)]$.
\end{proof}

Before proceeding with the deduction of Proposition~\ref{prop:extr_containment} from Proposition~\ref{prop:subsume} and Lemma~\ref{lem:annulus_crossing}, we will first introduce some more tools from \cite{MR4055195,hutchcroft2020logarithmic}.
We begin by defining a variant of the uniform spanning tree known as the \textit{0-wired uniform spanning forest}, which was first introduced by J\'{a}rai and Redig \cite{JarRed08} as part of their work on the Abelian sandpile model. Let $(V_n)_{n\geq 0}$ be an exhaustion of $\Z^4$ by finite connected sets. For each $n\geq 0$, let $G_n^{*}$ be the graph obtained by identifying (a.k.a.\ wiring) $\Z^4\setminus V_n$ into a single point denoted by $\partial_n$. Let $G_n^{*0}$ be the graph obtained by identifying $0$ with $\partial_n$ in in $G_n^*$. The \textbf{0-wired uniform spanning forest} is then the weak limit of the uniform spanning trees on $G_n^{*0}$ as $n\rightarrow\infty$, which is well-defined and does not depend on the choice of exhaustion \cite[\S3]{LMS08}. Lyons, Morris and Schramm \cite{LMS08} proved that the component of the origin in the $0$-wired forest is finite almost surely,
and, since the entire 0-wired forest is stochastically dominated by the uniform spanning tree by \cite[Theorem 4.6]{LP:book}, and the definitions ensure that every component other than that of the origin is infinite, the rest of the vertices of $\Z^4$ are contained in a single infinite one-ended component almost surely.

\medskip

\textbf{The stochastic domination property.} We let $\mathfrak{T}_0$ be the component of $0$ in the $0$-wired UST. Lyons, Morris and Schramm \cite[Proposition 3.1]{LMS08} proved that $\mathfrak{T}_0$ stochastically dominates $\mathfrak{P}(0)$, which we recall denotes the past of the origin in $\fT$. In \cite{MR4055195}, a stronger version of this stochastic domination property was derived, the relevant parts of which we restate below in our context. Given that the UST of $\Z^4$ is connected and one-ended, we can, in a unique manner, add an orientation to each edge in $\mathfrak{T}$ so that each vertex in the tree has exactly one oriented edge emanating from it. By abuse of notation, we denote the resulting oriented tree by $\fT$ as we do in the unoriented case. The oriented $0$-wired spanning forest $\mathfrak{F}_0$ is generated similarly, but with the edges in the finite component all oriented towards the origin.
Lastly, we generalise the notion of the \textit{past}: given an arbitrary oriented forest $F$, we define the past of a vertex $v\in F$, denoted $\mathrm{past}_F(v)$, to be the set of vertices $u$ with a directed path $\gamma$ in $F$ emanating from $u$ and ending at $v$.
\begin{lemma}[Stochastic Domination] \label{lemma:stoch_dom}
	Let $\fT$ be the oriented uniform spanning tree of $\Z^4$, and let $\mathfrak{F}_0$ be the oriented $0$-wired  uniform spanning forest of $\Z^4$. Let $K$ be a finite set of vertices in $\Z^4$ and let $\Gamma(K)=\cup_{u\in K} \Gamma(u,\infty)$. Then for every increasing event $\mathscr{A}\subseteq\{0,1\}^{E(\Z^4)}$ we have that \[
	\pr\left(\mathrm{past}_{\mathfrak{F}\setminus \Gamma(K)}(0)\in\mathscr{A}\mid \Gamma(K)\right)\leq \pr\big(\mathfrak{T}_0\in\mathscr{A}\big).
	\]
\end{lemma}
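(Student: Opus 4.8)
The plan is to reduce the claim to a finite-graph statement analysable by Wilson's algorithm, apply the domain Markov property to handle the conditioning on $\Gamma(K)$, and then invoke the stochastic domination for pasts under wiring. First I would pass to a finite exhaustion: by a routine monotone-class approximation it suffices to treat increasing events $\mathscr{A}$ depending on finitely many edges, say those inside a box $\Lambda(m)$, and for such $\mathscr{A}$ the left-hand side is the limit of the corresponding quantities computed in $G_n^*$, while $\mathfrak{T}_0$ is by construction the weak limit of the origin's component in the UST of $G_n^{*0}$. Here one uses that, on an event of probability tending to one, the restriction of $\mathrm{past}_{\fT\setminus F(K)}(0)$ to $\Lambda(m)$ is a measurable function of the UST of $G_n^*$ together with $\Gamma(K)$. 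So fix $n$, write $G = G_n^*$ and let $\partial$ be its wired vertex.

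Next I would run Wilson's algorithm on $G$ rooted at $\partial$, ordering the vertices so that those of $K$ are exhausted first; the paths produced in these initial steps are exactly the futures $\Gamma(u,\partial)$, $u\in K$, whose union is $\Gamma(K)$. Conditionally on $\Gamma(K)=\gamma$, the domain Markov property of Wilson's algorithm identifies the law of the remaining tree $\fT\setminus\gamma$ with that of the UST of the graph $G/(\gamma\cup\{\partial\})$ in which $\gamma$ and $\partial$ are contracted to a single vertex $\dagger$. The key observation is that, on the event $0\notin\gamma$, the set $\mathrm{past}_{\fT\setminus F(K)}(0)$ coincides with the past of $0$ in this contracted UST, oriented towards $\dagger$: a vertex $v$ lies in $\mathrm{past}_{\fT\setminus F(K)}(0)$ exactly when the future of $v$ in $\fT$ reaches $0$ without crossing an edge of $\gamma$, equivalently when $v$ lies in the component of $0$ in $\fT\setminus\gamma$ and $0$ lies on the directed path from $v$ to $\dagger$. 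The complementary event $0\in\gamma$ is handled by the same argument with $0$ now absorbed into $\dagger$ (so that the past in question becomes the union of the components of $\fT\setminus\gamma$ attaching to $\gamma$ at $0$), a case which is only easier.

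It remains to compare, for increasing events, the past of $0$ in the UST of $G/(\gamma\cup\{\partial\})$ with $\mathfrak{T}_0$, the set of vertices whose path to the root in the UST of $G/\{0,\partial\}$ terminates at $0$. This comparison is precisely the strengthening of the Lyons--Morris--Schramm domination carried out in \cite{MR4055195}, of which our lemma is a restatement, and I would reproduce its proof: one couples the run of Wilson's algorithm that builds $\mathrm{past}_{\fT\setminus F(K)}(0)$ --- reveal the future of $0$ first, then the remaining loop-erased walks --- with a run that builds $\mathfrak{T}_0$, and checks inductively that every vertex entering the past of $0$ in the first run also enters $\mathfrak{T}_0$ in the second. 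The \emph{main obstacle} is that loop-erasure does not commute with stopping the underlying walks earlier --- the walks in the first run are additionally stopped on $\gamma$ --- so the coupling must be arranged so that this extra stopping can only remove from, never add to, the past of $0$; making this precise is the technical heart of the cited stochastic domination result, and the argument transfers without change to the four-dimensional setting. Chaining this comparison with the identification of the previous paragraph and integrating over the realisations of $\gamma$ yields the lemma.
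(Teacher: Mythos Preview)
The paper does not give its own proof of this lemma: it is explicitly stated as a restatement of (the relevant parts of) the stochastic domination result proved in \cite{MR4055195}, with the surrounding text saying only that ``a stronger version of this stochastic domination property was derived'' there. So there is no in-paper argument to compare against.

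As a standalone sketch your outline is sound and follows the structure of the argument in \cite{MR4055195}: pass to a finite exhaustion, run Wilson's algorithm rooted at $\partial$ with the vertices of $K$ first so that $\Gamma(K)$ is exposed, use the spatial Markov property to identify the conditional law of the remainder with the UST of $G/(\gamma\cup\{\partial\})$, observe that $\mathrm{past}_{\fT\setminus\Gamma(K)}(0)$ is then exactly the past of $0$ in that contracted tree, and finally apply the Wilson-algorithm coupling showing that additional wiring can only shrink the past of $0$ (stochastically) compared with $\mathfrak{T}_0$. One small comment: your closing phrase ``transfers without change to the four-dimensional setting'' is slightly off-key, since the argument in \cite{MR4055195} is already dimension-free --- it is a statement about Wilson's algorithm on an arbitrary finite or transient graph --- so there is nothing to transfer. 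Otherwise the sketch is an accurate summary of how the cited proof goes.
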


 We will also utilize the following result of \cite{hutchcroft2020logarithmic}. For any subset $A$ of $\Z^4$ containing the origin, let $\Rext(A)$ be the maximal $\ell^1$ distance between the origin and a vertex of $A$.

\begin{theorem}[\!\!\cite{hutchcroft2020logarithmic}, Theorem 1.6]\label{theorem:ext_bounds} Let $\mathfrak{T}_0$ be the component of the origin in the $0$-wired uniform spanning tree of $\Z^4$. Then
	\[
	\pr\Big(\Rext(\mathfrak{T}_0)\geq n\Big)\asymp \frac{(\log n)^{1+o(1)}}{n^2}
	\]
 for every $n\geq 2$.
\end{theorem}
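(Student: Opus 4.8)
The plan is to use Wilson's algorithm to translate the event $\{\Rext(\mathfrak{T}_0)\geq n\}$ into a question about how a simple random walk interacts with an independent four-dimensional loop-erased random walk, and then to prove matching bounds $\pr(\Rext(\mathfrak{T}_0)\geq n)\preceq_\delta(\log n)^{1+\delta}/n^2$ and $\pr(\Rext(\mathfrak{T}_0)\geq n)\succeq_\delta(\log n)^{1-\delta}/n^2$ for every $\delta>0$. Recall that $\mathfrak{T}_0$ is finite almost surely \cite{LMS08}, that by Lemma~\ref{lemma:stoch_dom} it stochastically dominates the past $\mathfrak{P}(0)$ of the origin in the UST of $\Z^4$, and that (running Wilson's algorithm in $G_N^{*0}$ and letting $N\to\infty$) $\mathfrak{T}_0$ admits a description closely paralleling that of $\mathfrak{P}(0)$: the lower bound will be proved for $\mathfrak{P}(0)$ and the upper bound for $\mathfrak{T}_0$ directly. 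Conditioning on the future $\eta=\Gamma(0,\infty)$ of the origin, which is a loop-erased walk from $0$ to $\infty$, Wilson's algorithm identifies $\{v\in\mathfrak{P}(0)\}$ with the event that a simple random walk from $v$ makes its first visit to $\eta$ at the vertex $0$, equivalently (by reversibility of simple random walk, since $0\in\eta$) that a walk from $0$ reaches $v$ before hitting $\eta\setminus\{0\}$; since $\mathfrak{P}(0)$ is connected, $\{\Rext(\mathfrak{P}(0))\geq n\}$ is exactly the event that this holds for some $v\in\partial\Lambda(n)$. A naive first moment is useless here in both directions: the one-point function $\pr(v\in\mathfrak{P}(0))$ is comparable to $\|v\|^{-2}$ up to a subpolynomial correction in $\|v\|$ (this is the one-point function of the four-dimensional loop-erased walk and follows from \cite{MR1364896,MR4038044}), so $\mathbb{E}\,|\mathfrak{P}(0)\cap\partial\Lambda(n)|\asymp n(\log n)^{\pm o(1)}\gg1$, reflecting the fact that on the event $\{\Rext(\mathfrak{P}(0))\geq n\}$ the set $\mathfrak{P}(0)\cap\partial\Lambda(n)$ is typically macroscopic. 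Both bounds must therefore be argued structurally, the key device being the theory of typical times, which was introduced in \cite{hutchcroft2020logarithmic} precisely for this theorem.

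For the upper bound, suppose $\mathfrak{T}_0$ reaches $\partial\Lambda(n)$ and let $\gamma$ be a simple path in it from $0$ to $\partial\Lambda(n)$, of intrinsic length $m=|\gamma|$, arising as the loop-erasure of a random walk of length $\ell_m$. If $m\leq n^2(\log n)^{-3}$ then $\gamma$ contains a crossing of the annulus between $\partial\Lambda(n/4)$ and $\partial\Lambda(n)$ of length at most $n^2(\log n)^{-3}$, which by Lemma~\ref{lem:annulus_crossing} (applied across a dyadic family of annuli) occurs with probability $\exp[-\Omega((\log n)^2)]$ and may be discarded. If instead $m$ is larger, then $\gamma$ witnesses that the past (respectively $\mathfrak{T}_0$) has intrinsic radius at least $m$, whose probability decays like $(\log m)^{\pm o(1)}/m$ by the intrinsic-radius tail estimates of \cite{hutchcroft2020logarithmic}, and moreover $\ell_m\gtrsim n^2$ (a walk needs at least $\sim n^2$ steps to reach distance $n$), which given $\gamma$ costs a further factor of order $m(\log m)/n^2$ by the weak-$L^1$ bound $\bP_0(\ell_m\geq M\mid\LE(X))\preceq m(\log m)/M$, with Lemma~\ref{lemma:conc_T} and the bounds \eqref{eq:Ttilde_bounds_T}--\eqref{eq:delta_good_Ttilde_upper} together with Lemma~\ref{lemma:bad_bound} controlling the relevant conditional distributions. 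Optimising the product of these costs over $m$ yields $\pr(\Rext(\mathfrak{T}_0)\geq n)\preceq_\delta(\log n)^{1+\delta}/n^2$ for every $\delta>0$.

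For the lower bound I would construct a likely configuration: fix $v$ with $\|v\|\asymp n$ and estimate from below the probability that the walk from $v$ makes its first visit to $\eta$ at $0$ via a loop-erasure of near-typical length $\asymp n^2(\log n)^{1/3}$ that stays within a bounded multiple of $\Lambda(n)$, a one-point estimate of order $n^{-2}(\log n)^{-o(1)}$ whose proof uses the four-dimensional loop-erased walk estimates of \cite{MR1364896,MR4038044} together with the capacity bounds for the loop-erased walk of \cite{hutchcroft2020logarithmic,MR3945751} (these control the cost of avoiding the independent walk $\eta$). To convert this into a lower bound on $\pr(\Rext(\mathfrak{P}(0))\geq n)$ one cannot apply Paley--Zygmund over $\partial\Lambda(n)$, for the clustering reason noted above; instead I would work with a sufficiently sparse net of far vertices whose connection events are nearly independent, controlling the two-point function $\pr(v,v'\in\mathfrak{P}(0))$ by reducing it through Wilson's algorithm to the probability that a single loop-erased walk passes near $v$, near $v'$ and near $0$ and estimating this again via the capacity machinery. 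This gives $\pr(\Rext(\mathfrak{T}_0)\geq n)\succeq_\delta(\log n)^{1-\delta}/n^2$, and combined with the upper bound proves the theorem.

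The step I expect to be the main obstacle is the second-moment (clustering) analysis underlying the lower bound, together with the precise tracking of logarithmic factors on both sides: everything ultimately rests on the cost --- a power of $\log n$ --- of forcing a random walk to avoid an independent four-dimensional loop-erased walk out to distance $n$, which is exactly where the capacity and non-intersection estimates of \cite{MR4038044,hutchcroft2020logarithmic,MR3945751} do the real work, and it is the reliance on the typical-time localisation to tame the macroscopic size of the past on the event under consideration that produces the subpolynomial $(\log n)^{\pm o(1)}$ slack and prevents one from pinning down the exponent of $\log n$ exactly.
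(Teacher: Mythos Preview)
This statement is not proved in the present paper at all: it is quoted verbatim as Theorem~1.6 of \cite{hutchcroft2020logarithmic} and used as a black-box input (and, as the remark following it notes, only the crude version with $(\log n)^{O(1)}$ in place of $(\log n)^{1+o(1)}$ is actually needed here). There is therefore no proof in this paper to compare your proposal against.

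Your overall plan---Wilson's algorithm to reduce to an SRW/LERW avoidance problem, typical times to relate intrinsic length to extrinsic displacement for the upper bound, and a second-moment/capacity construction for the lower bound---is indeed the shape of the argument in \cite{hutchcroft2020logarithmic}, where the typical-time machinery was introduced precisely for this result. That said, several details of your sketch would not survive a careful write-up. First, you quote the intrinsic-radius tail as $(\log m)^{\pm o(1)}/m$, but the exponent of the logarithm is not $o(1)$: Theorem~\ref{theorem:past} gives $(\log m)^{1/3}/m$ for $\mathfrak{P}(0)$, and the corresponding tail for $\mathfrak{T}_0$ carries a further $(\log m)^{1/3}$; the distinction between $\mathfrak{P}(0)$ and $\mathfrak{T}_0$, which you blur, is exactly what accounts for the exponent $1$ in the theorem. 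Second, your ``optimising over $m$'' step does not close: multiplying an intrinsic-radius tail of order $(\log m)^{\Theta(1)}/m$ by the weak-$L^1$ factor $m(\log m)/n^2$ gives the wrong power of $\log n$, and in any case the weak-$L^1$ bound is vacuous once $m\gtrsim n^2/\log n$, which is well inside the range you need. The correct route is to use Lemma~\ref{lemma:conc_T} and \eqref{eq:delta_good_Ttilde_upper} with Azuma--Hoeffding to show that on the $\delta$-good event the extrinsic radius reaching $n$ forces the intrinsic radius to be at least of order $n^2(\log n)^{-1/3-\delta}$, and then invoke the sharp intrinsic-radius tail for $\mathfrak{T}_0$; this replaces your two-factor product by a single containment of events. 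Finally, invoking Lemma~\ref{lem:annulus_crossing} is anachronistic (it is proved in this paper, not in \cite{hutchcroft2020logarithmic}) though not circular.
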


\begin{remark}
For the proof of Proposition~\ref{prop:extr_containment} it would suffice to have the weaker bound in which $(\log n)^{1+o(1)}$ is replaced by $(\log n)^{O(1)}$, which is significantly easier to prove. (That is, it can be proven by the high-dimensional methods of \cite{MR4055195} without needing a careful analysis of the four-dimensional case.)
\end{remark}

 With these tools in hand we proceed to the proof of Proposition \ref{prop:extr_containment}.
\begin{proof}[Proof of Proposition \ref{prop:extr_containment}]
	Fix $\delta\in(0,1]$, and fix an integer $n\geq2$.
Let $\eta$ be the future of the origin in the uniform spanning tree $\mathfrak{T}$ and let $r=\lceil n^{1/2} (\log n)^{1/6+\delta}\rceil$.
We write 
\[\{\fB(n) \nsubseteq \Lambda(8r)\} \subseteq \mathscr{F}\cup \mathscr{E} \cup \mathscr{A},\]
where
$\mathscr{F}=\{\eta[0,n]\nsubseteq \Lambda(r)\}$ is the event that the first $n$ steps of the future are not contained in the box of radius $r$, $\mathscr{E}=\mathscr{E}(r,\lceil r^2/(\log r)^{-3}\rceil )$ is the event defined in Lemma~\ref{lem:annulus_crossing}, and $\mathscr{A}$ is the event $\{\fB(n) \nsubseteq \Lambda(8r)\}\setminus (\mathscr{F}\cup\mathscr{E})$.
We have already shown in Lemma~\ref{lem:annulus_crossing} that the probability of $\mathscr{E}$ is much smaller than required for $n$ sufficiently large. For the event $\mathscr{F}$, we use Wilson's algorithm to compute that
  \begin{align*}
  \pr\big(\mathscr{F}\big)&=\pb_0\big(\LE(X)^n\nsubseteq\Lambda(r)\big)\\
  &\leq \pb_0\big(\ell_n>2n(\log n)^{1/3})+\pb_0\left(\max_{0\leq k \leq 2n(\log n)^{1/3}} \|X_k\|_\infty > r\right)\\
  &\preceq \frac{\log\log n}{(\log n)^{2/3}} + \exp\left[-\Omega((\log n)^{\delta})\right]\preceq_\delta \frac{\log\log n}{(\log n)^{2/3}}
  \end{align*}
  as required,
 where the second inequality follows by Theorem \ref{theorem:LERWconc} for the bound on $\ell_n$, and e.g.\ the maximal version of Azuma-Hoeffding \cite[Section 2]{McDiarmid1998} for the bound on the displacement of the simple random walk.

We now bound the probability of $\mathscr{A}$. Observe that if $\mathscr{A}$ holds then there exists an integer $0\leq i\leq n-1$ such that $\mathfrak{P}(\eta_i,n)$ is not contained in $\Lambda(8r)$. Since $\mathscr{E}$ does not hold, we must also have that every crossing of the annulus $\Lambda(4r)\setminus \Lambda(r)$ has length at least $r^2/(\log r)^3$, and it follows that there must exist a collection of at least $r^2/(\log r)^3$ points $y\in (\fB(n)\setminus\eta[0,n]) \cap (\Lambda(4r)\setminus \Lambda(r))$ such that $\fP(y,n)$ has extrinsic diameter at least $4r$. Summing over all possible such points, applying Markov's inequality yields, and  using the stochastic domination lemma (Lemma~\ref{lemma:stoch_dom}) yields that
\begin{align*}
	\P(\mathscr{A})&\leq \frac{(\log r)^3}{r^2}\sum_{y\in\Lambda(4r)\setminus \Lambda(r)}\pr(y\in\mathfrak{B}(n)\setminus\eta[0,n] \text{ and } \mathrm{diam}(\mathfrak{P}(y))\geq 4r)\\
	&\leq \frac{(\log r)^3}{r^2}\sum_{y\in\Lambda(4r)\setminus \Lambda(r)} \pr\big(y\in\mathfrak{B}(n))  \pr\big(\Rext(\mathfrak{T}_0) \geq2r\big)
\\&=\frac{(\log r)^3}{r^2}\mathbb{E}\ \big\vert\{y\in\mathfrak{B}(n):y\notin \Lambda(r)\}\big\vert \pr\big(\Rext(\mathfrak{T}_0) \geq 2r\big),
  \end{align*}
  and it follows from Proposition~\ref{prop:subsume} and Theorem~\ref{theorem:ext_bounds} that
  \begin{align*}
	\P(\mathscr{A}) \preceq_{\delta,p} \frac{(\log r)^3}{r^2} \frac{n^2}{(\log n)^p}\cdot \frac{(\log r)^{1+o(1)}}{r^2} \preceq (\log n)^{10/3-4\delta-p+o(1)},
\end{align*}
for every $p\geq 1$. Taking $p=10$, say, yields a bound that is stronger than required and completes the proof.
\end{proof}

\subsection{Lower bounds}
\label{subsec:volume_lower}

In this section we prove the following proposition, which implies the lower bounds of Theorem~\ref{thm:main_geometric_theorem}. Note that, in contrast to Proposition~\ref{prop:volume_upper}, we do not lose any $(\log n)^{\pm o(1)}$ factors in this bound.

			\begin{proposition} \label{prop:volume_lower}
			Let $\fT$ be the uniform spanning tree of $\Z^4$. Then 
				\[
				 \abs{\mathfrak{B}(n)} = \bOmega\left(\frac{n^2}{(\log n)^{1/3}}\right)
				\]
				as $n\to\infty$.
			\end{proposition}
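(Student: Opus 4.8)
\textbf{Proof proposal for Proposition~\ref{prop:volume_lower}.}

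The plan is to produce, with probability tending to one, a large collection of vertices $x\in\mathfrak{B}(n)$ by exhibiting many vertices whose geodesic to the origin in $\fT$ has intrinsic length at most $n$. Since $|\mathfrak{B}(n)| \geq |\mathfrak{B}(0,n)\cap\fP(0)|$ and more generally $|\mathfrak{B}(n)|\geq |\fP(\eta_k,n)|$ for any point $\eta_k$ on the future $\eta=\Gamma(0,\infty)$ with $k\leq n$ (since then $\fP(\eta_k,n)\subseteq \mathfrak{B}(0,2n)$, and one reduces to $n$ by a constant factor), it suffices to lower bound the volume of the past of a typical vertex at intrinsic distance $\asymp n$ from the origin — equivalently, to lower bound $|\fP(0,n)|$ in distribution. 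The mechanism is the first-moment/second-moment method: show $\mathbb{E}|\fP(0,n)| \succeq n^2/(\log n)^{1/3}$ and that $|\fP(0,n)|$ is concentrated enough that it is $\bOmega$ of its mean.

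First I would estimate the first moment. By Wilson's algorithm and the mass-transport principle, $\mathbb{E}|\fP(0,n)| = \sum_{x}\P(0\in\Gamma(x,\infty),\,|\Gamma(x,0)|\leq n)$, and using Wilson's algorithm rooted at infinity this equals $\sum_x\sum_{k=0}^n\bP_x(\LE(X)_k = 0) = \sum_{k=0}^n \bP_0(\LE(X)_k\in\Z^4) \cdot(\text{something})$; more carefully one writes $\mathbb{E}|\fP(0,n)| = \sum_{k=0}^{n}\bP_0(0\in\LE(X),\ \rho$-index of $0$ is $k)$-type quantities, which after summing over the other endpoint becomes $\sum_{k\le n}\bP_0(\text{$0$ is the $k$-th point of }\LE(X))$. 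The key input here is Lawler's Green's function asymptotics for the four-dimensional loop-erased walk together with the length concentration $\rho_n \asymp n(\log n)^{-1/3}$ from Theorem~\ref{theorem:LERWconc}: a walk run for time $\asymp m$ produces a loop-erasure of length $\asymp m(\log m)^{-1/3}$, so to get a loop-erasure reaching intrinsic length $n$ one needs time $\asymp n(\log n)^{1/3}$, and the number of distinct starting points $x$ whose geodesic to $0$ has length $\leq n$ is comparable to the number of lattice points visited by such a walk, which is $\asymp n(\log n)^{1/3}\cdot (\log(n(\log n)^{1/3}))^{-1}\cdot$(escape-probability factors) $\asymp n^2/(\log n)^{1/3}$ after the dust settles. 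This is where the capacity-of-the-loop-erased-walk estimates of \cite{hutchcroft2020logarithmic,MR3945751} enter, controlling how many $x$ hit a given loop-erased path.

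Next I would upgrade the first-moment bound to a distributional lower bound via a second-moment estimate, or more robustly via the stochastic-domination structure: condition on the future $\eta$, and for each of $\asymp (\log n)^{1/3}$ well-separated indices $i$ along $\eta[0,n/2]$ consider the past $\fP(\eta_i)$ grown by Wilson's algorithm; these are not independent, but the contributions to $\mathfrak{B}(n)$ they provide are disjoint, and one can show that a positive proportion of them each contribute $\succeq n^2(\log n)^{-2/3}$ volume with non-negligible probability, or alternatively control $\mathbb{E}[|\fP(0,n)|^2]$ directly through two-point loop-erased walk estimates (two walks from $x$ and $y$ both loop-erasing through $0$) and invoke Paley–Zygmund. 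The two-point function calculation is the standard $d_c=4$ computation where the second moment is comparable to the square of the first up to a $(\log n)^{O(1)}$ factor that one must show does not spoil tightness — here I would lean on \cite[Section 7]{hutchcroft2020logarithmic} and the capacity estimates rather than redo it.

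The main obstacle I anticipate is precisely this concentration/second-moment step: because the length of a four-dimensional loop-erased walk has only the weak $(\log n)^{-1/3}$-type concentration of Theorem~\ref{theorem:LERWconc} and the heavy $(\log n)^{-1/3}$ tail on abnormally long connections, a naive second moment could carry spurious logarithmic factors, and one must argue that the volume of $\fP(0,n)$ is genuinely carried by ``typical'' vertices whose geodesics have Euclidean diameter $\asymp n^{1/2}(\log n)^{1/6}$ (exactly the heuristic flagged in the introduction). I expect the resolution to route through the capacity bounds of \cite{hutchcroft2020logarithmic,MR3945751} — capacity of the loop-erased walk is well enough concentrated to give a clean second-moment control — combined with the stochastic domination lemma (Lemma~\ref{lemma:stoch_dom}) to compare $\fP(0,n)$ with the 0-wired forest component $\mathfrak{T}_0$, whose extrinsic radius tail is pinned down by Theorem~\ref{theorem:ext_bounds}. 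The payoff of using capacity rather than volume directly is that no $(\log n)^{\pm o(1)}$ slack is introduced, consistent with the sharp statement of the proposition.
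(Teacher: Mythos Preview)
Your primary reduction is fatally flawed: $|\fP(0,n)|$ is \emph{not} $\bOmega(n^2(\log n)^{-1/3})$, since with probability bounded away from zero the origin is a leaf of $\fT$ and $|\fP(0,n)|=1$ for every $n$. The claimed ``equivalence'' between lower-bounding $|\fP(\eta_k,n)|$ for $\eta_k$ on the future and lower-bounding $|\fP(0,n)|$ is false --- $\eta_k$ is size-biased to have a nontrivial past (it contains the origin), whereas $0$ is not. So no second-moment or Paley--Zygmund argument applied to $|\fP(0,n)|$ can succeed, and the tools you invoke at the end (stochastic domination by $\mathfrak{T}_0$, Theorem~\ref{theorem:ext_bounds}) are in fact the paper's \emph{upper}-bound machinery, not lower-bound.

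Your alternative --- conditioning on $\eta$ and aggregating contributions along it --- points in the right direction, but the paper's implementation is structurally different from what you sketch. Rather than summing pasts $\fP(\eta_i)$ along the spine, the paper fixes an \emph{extrinsic} scale $r$, and for boxes $\Lambda(x,3r)$ counts vertices $y$ whose path to $\eta$ stays inside the box, has length at most $r^2(\log r)^{-1/3}$, and is ``$(\alpha,r)$-good'' (a capacity-type condition engineered precisely so the conditional second moment closes). The conditional mean of this count is bounded below by $r^2\,\mathrm{Cap}(\eta\cap\Lambda(x,r))$ via a Cauchy--Schwarz argument on a $\mu$-weighted hitting count (with $\mu$ the capacitary measure), and the conditional variance is bounded via the goodness restriction; a covering lemma then tiles space with disjoint such boxes, negative correlation between boxes (again from the goodness restriction) controls the total variance, and Chebyshev plus the capacity estimate for $\LE(X)^n$ (Proposition~\ref{prop:cap_bound}) finishes. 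The two ideas you do not have --- the engineered ``good path'' restriction that tames the second moment, and the extrinsic-box/capacity framing rather than intrinsic pasts --- are exactly what allow the argument to avoid any $(\log n)^{\pm o(1)}$ slack.
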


\begin{remark}
The proof yields the explicit lower tail bound
\[
\P\left(\abs{\mathfrak{B}(n)} \leq \frac{n^2}{\lambda (\log n)^{1/3}}\right) \preceq \lambda^{-1/5}
\]
for every $n\geq 3$ and $1\leq \lambda \leq \log n$. Presumably this bound is far from optimal.
      \end{remark}

We will prove this proposition by estimating the mean and variance of certain random variables that lower bound $\abs{\mathfrak{B}(n)}$. We expect $\abs{\mathfrak{B}(n)}$ to be unconcentrated\footnote{Indeed, it should converge under appropriate rescaling to the volume of a ball in the ICRT (Process 2 in \cite{aldous1991continuum}), which is not deterministic.
}, so its variance should be of the same order as its second moment and applying Chebyshev directly to $|\fB(n)|$ should not be a viable method to prove lower tail bounds. Instead we calculate the mean and variance of a certain `good' portion of the uniform spanning tree within a certain radius of the spine. We choose this radius according to how deep into the lower tail of the volume we wish to control: the lower we take this radius, the deeper into the tail we bound. The precise meaning of `good' we will use is engineered precisely to make the later parts of the proof go through cleanly.

\medskip

Our first task is to set up the relevant definitions. Recall that $\bP_z$ denotes the law of a simple random walk $X$ on $\Z^4$ started at $z$ for each $z\in \Z^4$.
\cite[Theorem 7.4]{hutchcroft2020logarithmic} states that if $\pb_{0,\Lambda(r)}$ denotes  the joint law of two independent random walks $X$ and $Y$ started at $0$ and at a uniform point of $\Lambda(r)$ respectively, then
\begin{equation}
\label{eq:intersect_inside_box}
\pb_{0,\Lambda(r)}(X \cap Y \cap \Lambda(r) \neq \emptyset) \asymp \frac{1}{\log r}
\end{equation}
for $r\geq 2$.
Fix $\alpha>0$ and $r\geq 2$.
We say a path $\gamma$ in $\Z^4$ is \textbf{$(\alpha,r)$-good} if
\[
\sum_{z\in \Lambda(\gamma_0,6r)}\mathbf{P}_z\big(\text{hit }\gamma\cap\Lambda(\gamma_0,6r)\big)\leq \alpha\frac{r^4}{\log r},
\]
and say that $\gamma$ is \textbf{$(\alpha,r)$-bad} otherwise.
We note that 
\begin{align}\label{eq:bad_low_prob}
\pb_0(X\ \text{is $(\alpha,r)$-bad})&=
\bP_{0,\Lambda(6r)} \left(|\Lambda(6r)| \bP_{0,\Lambda(6r)}\left(X \cap Y \cap \Lambda(6r) \neq \emptyset \mid X \right) > \alpha \frac{r^4}{\log r}\right)
\preceq \alpha^{-1}
\end{align}
by \eqref{eq:intersect_inside_box} and Markov's inequality.
 Crucially, we also observe that being $(\alpha,r)$-bad is an increasing property of a path in the sense that if $\gamma$ and $\tilde \gamma$ are two paths satisfying $\gamma_0=\tilde \gamma_0$  and $\gamma\subseteq \tilde \gamma$, then $\tilde \gamma$ is $(\alpha,r)$-bad whenever $\gamma$ is $(\alpha,r)$-bad. We will apply this to bound the probability that a loop-erased random walk is bad in terms of the probability that the corresponding \emph{simple} random walk is bad.

\medskip

Condition on the future of the origin $\eta:=\Gamma(0,\infty)$ in the uniform spanning tree $\fT$ and for each $x\in\Z^4$ and $r\geq 3$ consider the random set
\[
M_{\alpha}(x,r) = \left\{y\in \Lambda(x,3r):\Gamma(y,0\wedge y)\subseteq \Lambda(x,3r),\, \abs{\Gamma(y,0\wedge y)}\leq \frac{r^2}{(\log r)^{1/3}},\text{ and } \Gamma(y,0\wedge y) \text{ is $(\alpha,r)$-good}\right\}.
\]
The key step in the proof of Proposition~\ref{prop:volume_lower} is to bound the conditional mean and variance of $|M_\alpha(x,r)|$ in terms of the \emph{capacity} of $\eta$. Here we recall that the \textbf{capacity} (a.k.a.\ conductance to infinity) of a set $A \subseteq \Z^4$ is defined to be
\[
\mathrm{Cap}(A)=\sum_{a\in A} \deg(a)\bP_a(\text{never return to $A$ after time zero}) = 8\sum_{a\in A} \bP_a(\text{never return to $A$ after time zero}).
\]
The two relevant estimates are as follows, where we write $\mathrm{Var}^\eta$ for the conditional variance given $\eta$:

\begin{proposition}\label{prop:lower_bound_expectation} 
There exist $\alpha_0>0$ and $r_0>0$ such that if $\alpha \geq \alpha_0$ then
	\[
	\mathbb{E}^\eta\abs{M_\alpha(x,r)}\succeq r^{2}\mathrm{Cap}(\eta\cap \Lambda(x,r))
	\]
for every $x\in \Z^4$ and every $r\geq r_0$.
\end{proposition}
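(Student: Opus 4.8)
\textbf{Proof proposal for Proposition~\ref{prop:lower_bound_expectation}.}

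The plan is to compute $\mathbb{E}^\eta|M_\alpha(x,r)|$ as a sum over vertices $y\in\Lambda(x,3r)$ of the conditional probability that $y$ lies in $M_\alpha(x,r)$, and to extract a lower bound by restricting attention to a well-chosen subset of good vertices $y$. Wilson's algorithm rooted at infinity lets us generate $\Gamma(y,0\wedge y)$, the portion of the path from $y$ to $\eta$, by running a simple random walk $X$ from $y$ and loop-erasing it until it first hits $\eta$; the event $\{y\in M_\alpha(x,r)\}$ then becomes the event that this loop-erased walk stays inside $\Lambda(x,3r)$, has length at most $r^2(\log r)^{-1/3}$, and is $(\alpha,r)$-good. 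First I would fix attention on vertices $y$ at distance of order $r$ from $\eta\cap\Lambda(x,r)$ — say $y$ in a sub-box around some $a\in\eta\cap\Lambda(x,r)$ at distance $\asymp r$ — and use the strong Markov property at the hitting time of a ball around $a$ together with standard four-dimensional heat-kernel and hitting estimates (gambler's-ruin / last-exit decompositions as in Lawler's work) to show that with probability bounded below, $X$ hits $\eta$ within $\Lambda(x,3r)$ having taken only $O(r^2)$ steps, hence producing a loop-erasure of length $O(r^2)\le r^2(\log r)^{-1/3}$ once we note (via Theorem~\ref{theorem:LERWconc}-type bounds or directly $\rho_k\le k$) that loop-erasing only shortens. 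Summing the resulting lower bound $\succeq 1$ over the $\asymp r^4$ such vertices $y$ attached to a single $a$, and then over $a$ ranging in $\eta\cap\Lambda(x,r)$, the natural quantity that emerges after accounting for overcounting is exactly $r^2\,\mathrm{Cap}(\eta\cap\Lambda(x,r))$: the capacity is precisely $\sum_a\deg(a)\bP_a(\text{no return})$, and the factor $r^2$ comes from the $r^4$ nearby vertices each contributing $\Theta(1)$ but discounted by the $\Theta(r^{-2})$ probability that a walk from such a vertex hits $\eta$ near $a$ specifically.

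The cleanest way to organise this is a second-moment-free capacity computation: for a transient set $\Lambda$ one has $\mathrm{Cap}(\eta\cap\Lambda(x,r)) \asymp r^{-2}\sum_{y\in\Lambda(x,Cr)}\bP_y(X\text{ hits }\eta\cap\Lambda(x,r)\text{ before leaving }\Lambda(x,3r))$ by comparing hitting probabilities from the bulk of a box to the capacity (this is a standard fact — essentially that the equilibrium measure governs hitting from far away, and from distance $\asymp r$ the hitting probability of a set of capacity $\kappa$ is $\asymp \kappa/r^2$ in $d=4$). So I would prove that two-sided relation first, then show that on the event $\{X\text{ hits }\eta\cap\Lambda(x,r)\text{ inside }\Lambda(x,3r)\}$, the additional requirements defining $M_\alpha(x,r)$ — namely length $\le r^2(\log r)^{-1/3}$ and $(\alpha,r)$-goodness — each fail only with small probability, so that they can be absorbed. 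For the length: condition further on hitting $\eta$ quickly (within, say, $C r^2$ steps), which by the same heat-kernel estimates happens with probability bounded below given that we hit $\eta$ at all from distance $\asymp r$; then $|\Gamma(y,0\wedge y)|\le \rho_{Cr^2}\le Cr^2 \le r^2(\log r)^{-1/3}$ for $r$ large. For goodness: by \eqref{eq:bad_low_prob} the unconditioned walk is $(\alpha,r)$-bad with probability $\preceq \alpha^{-1}$, and since being bad is an increasing property of the path and loop-erasure only removes vertices, the loop-erased walk is bad with probability at most that of the simple walk being bad; choosing $\alpha_0$ large makes this $\le 1/2$ after one more application of the strong Markov property to decouple the hitting event from the long-range structure of the path. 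Combining, a positive fraction of the vertices $y$ contributing to the capacity sum also satisfy all the constraints defining $M_\alpha(x,r)$, giving $\mathbb{E}^\eta|M_\alpha(x,r)| \succeq r^2\,\mathrm{Cap}(\eta\cap\Lambda(x,r))$.

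The main obstacle I anticipate is the interplay between the conditioning coming from Wilson's algorithm (the walk from $y$ is conditioned on nothing until it hits $\eta$, but the path we keep is its loop-erasure, and $\eta$ itself is random and appears in every term) and the $(\alpha,r)$-goodness constraint, which is a global constraint on the loop-erased path rather than on the simple walk. The point $(\alpha,r)$-badness is designed to be an \emph{increasing} event precisely so that one can dominate ``$\LE(X^{\tau_\eta})$ is bad'' by ``$X^{\tau_\eta}$ is bad'', but one still needs the walk $X$ to actually hit $\eta$ while being good, not merely to be good in isolation; handling this requires splitting the walk at an intermediate scale — say the hitting time of $\partial\Lambda(a,r)$ — and using that the ``escape to $\eta$'' portion and the potential ``badness near $\gamma_0=y$'' portion live at controllably separated scales, so that a union bound over the bad event ($\preceq\alpha^{-1}$) and the good-hitting event ($\succeq 1$) still leaves a positive-probability event. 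A secondary technical nuisance is that $\eta\cap\Lambda(x,r)$ is a random set whose capacity we are not trying to estimate — we only need the \emph{pointwise} inequality $\mathbb{E}^\eta|M_\alpha| \succeq r^2\mathrm{Cap}(\eta\cap\Lambda(x,r))$ with a deterministic implicit constant, so all estimates must be uniform over realisations of $\eta$, which is why everything is phrased through unconditional random-walk hitting estimates that do not see $\eta$'s law.
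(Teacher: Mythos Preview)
There is a genuine gap in your handling of the length constraint $|\Gamma(y,0\wedge y)|\le r^2(\log r)^{-1/3}$, and this is precisely where the real work in the proposition lies. Your chain of inequalities
\[
|\Gamma(y,0\wedge y)|\le \rho_{Cr^2}\le Cr^2 \le r^2(\log r)^{-1/3}
\]
is wrong at both ends. First, the inequality $|\LE(X^{\tau_\eta})|\le \rho_{\tau_\eta}$ goes the wrong way: in general $|\LE(X^k)|\ge \rho_k$, because $\rho_k$ counts only those points that survive the \emph{infinite} loop-erasure, whereas the loop-erasure of the stopped walk may retain points that would later be erased. Second, the inequality $Cr^2\le r^2(\log r)^{-1/3}$ is plainly false for any constant $C>0$. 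So the argument ``loop-erasing only shortens, and we only ran for $O(r^2)$ steps'' gives at best the bound $|\LE(X^{\tau_\eta})|\le Cr^2$, which is larger than the required $r^2(\log r)^{-1/3}$ by a factor of $(\log r)^{1/3}$. Getting rid of that factor is the entire content of Theorem~\ref{theorem:LERWconc}, and transferring the concentration from $\rho_k$ to $|\LE(X^k)|$ at a \emph{random} time $k=\sigma$ requires the cut-time machinery of Lemma~\ref{lem:Lawlercuttimes}: one needs a cut time near $\sigma$ to relate the stopped loop-erasure to the infinite one, and one needs to control the failure probability of this uniformly over $n\le\sigma\le N$.

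There is a second, related problem with your decoupling strategy. You want to show that, \emph{conditionally} on $\{X\text{ hits }\eta\text{ inside }\Lambda(x,3r)\}$, the goodness and length constraints each hold with probability bounded below; but the conditioning can in principle bias the path towards being bad or long, and your sketch does not explain how the strong Markov property dissolves this. The paper avoids conditional estimates entirely by taking a different route: it introduces the random variable
\[
A_r=\mathbbm{1}\bigl(\sigma<\tau,\ |\LE(X^\sigma)|\le n,\ \LE(X^\sigma)\text{ good}\bigr)\sum_{w\in\eta\cap\Lambda(x,r)}\sum_{j=0}^{N}\mu(w)\mathbbm{1}(X_j=w),
\]
where $\mu$ is the equilibrium measure on $\eta\cap\Lambda(x,r)$, and proves $\bE[A_r]\succeq r^{-2}$ and $\bE[A_r^2]\preceq r^{-2}\mathrm{Cap}(\eta\cap\Lambda(x,r))^{-1}$, then applies Cauchy--Schwarz to get $\bP(A_r>0)\succeq r^{-2}\mathrm{Cap}(\eta\cap\Lambda(x,r))$. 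The first-moment lower bound is obtained by writing $A_r=E_r-B_r-C_r-D_r$ and bounding the exit term $B_r$, the bad term $D_r$, and the too-long term $C_r$ \emph{unconditionally}; the weighting by $\mu$ is what makes the second moment collapse to the capacity via Lemma~\ref{lem:capacity_formulation}. The term $C_r$ (your missing length constraint) is shown to be $o(r^{-2})$ via the cut-time argument sketched above, and this is the most delicate step. Your capacity-as-hitting-from-a-box identity (essentially Lemma~\ref{lem:uniform_point_capacity}) is correct and is indeed used in the companion variance bound, but it does not by itself circumvent the length analysis.
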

\begin{proposition}\label{prop:upper_bound_variance}
For each $\alpha>0$ we have
	\[
	\mathrm{Var}^\eta(\abs{M_\alpha(x,r)})\preceq\alpha \frac{r^6}{\log r}\mathrm{Cap}(\eta\cap \Lambda(x,3r)).
	\]
for every $x\in \Z^4$ and every $r\geq 2$.
\end{proposition}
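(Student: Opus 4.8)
The plan is to write $|M_\alpha(x,r)| = \sum_{y\in\Lambda(x,3r)} \ind{y\in M_\alpha(x,r)}$ and expand the variance as a double sum
\[
\mathrm{Var}^\eta(|M_\alpha(x,r)|) = \sum_{y,z\in\Lambda(x,3r)} \Bigl[\P^\eta(y,z\in M_\alpha(x,r)) - \P^\eta(y\in M_\alpha)\P^\eta(z\in M_\alpha)\Bigr].
\]
Fixing $y,z$, I would sample the two paths $\Gamma(y,0\wedge y)$ and $\Gamma(z,0\wedge z)$ simultaneously using Wilson's algorithm rooted at infinity with $\eta$ already revealed: run a random walk $X^{(y)}$ from $y$ until it hits $\eta$, loop-erase, then run a walk $X^{(z)}$ from $z$ until it hits $\eta \cup \LE(X^{(y)})$, loop-erase. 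The covariance between the two indicator events is governed by the event that these two walks interact before hitting $\eta$ — if $X^{(z)}$ hits $\eta$ without touching the range of $X^{(y)}$, then (conditionally on $\eta$) the two events are independent and contribute nothing to the covariance. So the key point is that the summand is bounded by
\[
\P^\eta\bigl(y\in M_\alpha(x,r),\ z\in M_\alpha(x,r),\ X^{(z)} \text{ hits the range of } X^{(y)} \text{ before } \eta\bigr)
\]
plus a negligible correction, and I need to sum this over $y,z$.

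The main work is then to estimate this two-walk quantity. On the event $y\in M_\alpha(x,r)$ the path $\Gamma(y,0\wedge y)$ has length at most $r^2(\log r)^{-1/3}$, is contained in $\Lambda(x,3r)$, and is $(\alpha,r)$-good; goodness is precisely the statement that $\sum_{w\in\Lambda(y,6r)} \bP_w(\text{hit }\Gamma(y,0\wedge y)\cap\Lambda(y,6r)) \le \alpha r^4/\log r$. Summing the probability that the second walk from $z$ hits the (good) range of the first walk over all $z\in\Lambda(x,3r)\subseteq\Lambda(y,6r)$ therefore yields at most $\alpha r^4/\log r$ by the very definition of goodness — this is exactly why the notion of good path was engineered as it is. After this sum over $z$ is performed, one is left with
\[
\sum_{y\in\Lambda(x,3r)} \P^\eta\bigl(y\in M_\alpha(x,r)\bigr)\cdot \frac{\alpha r^4}{\log r} = \frac{\alpha r^4}{\log r}\,\mathbb{E}^\eta|M_\alpha(x,r)|.
\]
So I will need an a priori \emph{upper} bound $\mathbb{E}^\eta|M_\alpha(x,r)| \preceq r^2\,\mathrm{Cap}(\eta\cap\Lambda(x,3r))$, which should follow from Wilson's algorithm: $\P^\eta(y\in M_\alpha(x,r)) \le \bP_y(\text{hit }\eta\cap\Lambda(x,3r)\text{ by time } r^2(\log r)^{-1/3})$, roughly, and summing the hitting probability of $\eta\cap\Lambda(x,3r)$ over $y\in\Lambda(x,3r)$ is comparable to $r^2\,\mathrm{Cap}(\eta\cap\Lambda(x,3r))$ by the standard last-exit / capacity formula together with the fact that a walk started in $\Lambda(x,3r)$ that hits $\eta\cap\Lambda(x,3r)$ does so within $O(r^2)$ steps with good probability (and the tail is controlled by Azuma--Hoeffding, costing only a constant). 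Combining the two displays gives the claimed bound $\mathrm{Var}^\eta \preceq \alpha \frac{r^6}{\log r}\mathrm{Cap}(\eta\cap\Lambda(x,3r))$.

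The main obstacle I anticipate is handling the interaction term carefully in the Wilson's-algorithm expansion: one must be careful that conditioning on $\{y\in M_\alpha(x,r)\}$ — which includes the goodness constraint on $\Gamma(y,0\wedge y)$ and a length constraint — does not distort the law of the second walk's hitting probabilities, and that the event "$z\in M_\alpha$ too" can simply be dropped (since we only need an upper bound) after we have extracted the crossing event. A clean way to organize this is to first bound the covariance summand by $\P^\eta(y\in M_\alpha)\cdot \sup_{\gamma \text{ good}}\bP_z(\text{hit }\gamma \text{ before } \eta \mid \LE(X^{(y)})=\gamma, \dots)$ and then observe that the conditioning on $\LE(X^{(y)})=\gamma$ is irrelevant for the walk from $z$. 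There is also a minor subtlety that $\Lambda(x,3r)$ need not be centered at $y$, but since $y\in\Lambda(x,3r)$ we have $\Lambda(x,3r)\subseteq\Lambda(y,6r)$, which is exactly the radius appearing in the definition of $(\alpha,r)$-good, so the goodness estimate applies verbatim. The negative term $-\P^\eta(y\in M_\alpha)\P^\eta(z\in M_\alpha)$ is only needed to ensure the diagonal and non-interacting terms cancel and can be discarded once the interaction event is isolated, since all remaining contributions are non-negative and already absorbed into the bound above.
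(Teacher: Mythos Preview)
Your proposal is correct and follows essentially the same approach as the paper: expand the second moment via Wilson's algorithm with two walks, split according to whether the second walk hits $\eta$ before or after hitting the first walk's loop-erasure, observe that the non-interacting part is bounded by $(\mathbb{E}^\eta|M_\alpha|)^2$ (so cancels against the squared mean in the variance), and use the $(\alpha,r)$-goodness of $\LE(X^{(y)})$ to bound the sum over $z$ of the interaction probability by $\alpha r^4/\log r$, leaving $\alpha\frac{r^4}{\log r}\mathbb{E}^\eta|M_\alpha|$; finally bound $\mathbb{E}^\eta|M_\alpha| \preceq r^2\mathrm{Cap}(\eta\cap\Lambda(x,3r))$.

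Two small cleanups: (i) there is no ``negligible correction'' in the non-interacting case---on $\{\sigma_4<\sigma_3\}$ one has $\sigma_2=\sigma_4$ exactly, so the contribution is bounded above by $\mathbb{E}^\eta[|M_\alpha|]^2$ on the nose; (ii) for the upper bound on $\mathbb{E}^\eta|M_\alpha|$ you do not need any time constraint or Azuma--Hoeffding: simply drop all conditions in the definition of $M_\alpha$ to get $\mathbb{E}^\eta|M_\alpha| \le \sum_{y\in\Lambda(x,3r)} \bP_y(\text{hit }\eta\cap\Lambda(x,3r))$ (the endpoint $0\wedge y$ lies in $\eta\cap\Lambda(x,3r)$ whenever $y\in M_\alpha$), and then apply the standard fact (the paper's Lemma~\ref{lem:uniform_point_capacity}) that $\sum_{y\in\Lambda(R)}\bP_y(\text{hit }S)\asymp R^2\mathrm{Cap}(S)$ in $\Z^4$.
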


We will require the following variational formula for the capacity proved in \cite[Lemma 2.3]{JAIN1973795}.
Recall that the Green's function on $\Z^4$ is defined by
\[G(x,y) = \frac{1}{\deg y}\mathbf{E}_x\sum_{n\geq 0}\mathbbm{1}(X_n=y)=\frac{1}{8}\mathbf{E}_x\sum_{n\geq 0}\mathbbm{1}(X_n=y),\]
where $X$ is a simple random walk on $\Z^4$ and $x,y\in\Z^d$.
\begin{lemma}\label{lem:capacity_formulation}
	The capacity of a set $S\subset\Z^4$ can be expressed as 	
	\begin{equation}\label{eq:capacity_formulation}
	\mathrm{Cap}(S)^{-1} = \inf\left\{\sum_{u,v\in S} G(u,v)\mu(u)\mu(v):\mu \text{ is a probability measure on $S$}\right\}.
	\end{equation}
\end{lemma}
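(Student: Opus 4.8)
The plan is to prove the variational formula by exhibiting the equilibrium (harmonic) measure as the optimizer and using a standard duality/reciprocity argument for the Green's function. First, recall that the capacity of a finite set $S$ comes with an associated equilibrium measure $e_S$, defined by $e_S(a) = \deg(a)\bP_a(\text{never return to } S \text{ after time } 0)$, so that $\mathrm{Cap}(S) = \sum_{a\in S} e_S(a) = e_S(S)$. The key classical identity I would invoke is that the potential of the equilibrium measure is constant on $S$: precisely, $\sum_{v\in S} G(u,v) e_S(v) = 1$ for every $u\in S$. (This is the discrete analogue of the fact that the equilibrium potential equals $1$ on the conductor; it follows from a last-exit decomposition, writing $\bP_u(\text{hit } S) = 1$ for $u \in S$ as a sum over the last visit to each $v\in S$.) Normalizing, the probability measure $\mu^\star := e_S / \mathrm{Cap}(S)$ then satisfies $\sum_{u,v\in S} G(u,v)\mu^\star(u)\mu^\star(v) = \mathrm{Cap}(S)^{-1} \sum_{u\in S}\mu^\star(u) \cdot 1 = \mathrm{Cap}(S)^{-1}$, which shows the infimum on the right-hand side of \eqref{eq:capacity_formulation} is at most $\mathrm{Cap}(S)^{-1}$.

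For the matching lower bound on the infimum (i.e.\ that no probability measure does better), I would argue that $\mu \mapsto \sum_{u,v\in S} G(u,v)\mu(u)\mu(v)$ is a quadratic form associated to the positive semidefinite kernel $G$ restricted to $S$, so it is convex, and hence its minimum over the simplex of probability measures is characterized by first-order (KKT) conditions: at the minimizer $\mu$ there is a constant $c$ with $\sum_{v\in S} G(u,v)\mu(v) = c$ for all $u$ in the support of $\mu$ and $\geq c$ off the support. Pairing this with $\mu$ gives $\sum_{u,v} G(u,v)\mu(u)\mu(v) = c$, and pairing the inequality form with $e_S/\mathrm{Cap}(S)$ (which is supported on all of $S$) together with the reciprocity $\sum_{v} G(u,v) e_S(v) = 1$ yields $1 = \sum_u (e_S(u)/\mathrm{Cap}(S)) \sum_v G(u,v)\mu(v) \geq c$, whence $c \geq \mathrm{Cap}(S)^{-1}$ follows from the reverse pairing. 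Combining the two bounds gives equality. Since every set in our applications arises as an increasing limit of finite sets and both sides of \eqref{eq:capacity_formulation} are continuous under such limits (capacity increasing, the variational quantity behaving correspondingly), the formula extends to the sets $S\subset \Z^4$ we need; in any case we only ever apply it to finite sets of the form $\eta\cap\Lambda(x,r)$.

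The main obstacle is purely expository rather than mathematical: the cleanest route is simply to cite \cite[Lemma 2.3]{JAIN1973795} as the statement already does, since the discrete potential-theoretic facts used (existence of the equilibrium measure, constancy of its potential on $S$, positive semidefiniteness of $G$) are entirely standard and the KKT argument is routine convex analysis. If a self-contained proof is desired, the one delicate point to get right is the last-exit decomposition establishing $\sum_{v\in S}G(u,v)e_S(v)=1$ for $u\in S$ with the correct normalization of $G$ by $\deg(y) = 8$; this is where a sign or factor-of-degree error would most easily creep in, and I would check it against the definition $G(x,y) = \tfrac18 \mathbf{E}_x\sum_n \mathbbm 1(X_n=y)$ and $e_S(a) = 8\,\bP_a(\text{no return to }S)$ given in the excerpt.
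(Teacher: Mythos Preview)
The paper does not prove this lemma at all: it is stated with the preamble ``the following variational formula for the capacity proved in \cite[Lemma 2.3]{JAIN1973795}'' and simply cited. Your proposal to invoke that reference is therefore exactly what the paper does, and nothing more is required.

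Your self-contained sketch has all the right ingredients but the lower-bound step is muddled. Pairing the KKT inequality $\sum_v G(u,v)\mu(v)\geq c$ against the probability measure $e_S/\mathrm{Cap}(S)$ gives, using symmetry of $G$ and $\sum_u G(u,v)e_S(u)=1$, the value $\mathrm{Cap}(S)^{-1}$ on the left (not $1$ as you wrote), and hence only $\mathrm{Cap}(S)^{-1}\geq c$, which is the wrong direction; the vague ``reverse pairing'' does not repair this. The clean way to get the lower bound, using the very facts you already isolated, is to apply positive semidefiniteness of $G$ to the signed measure $\mu-\mu^\star$: since $\sum_v G(u,v)\mu^\star(v)\equiv \mathrm{Cap}(S)^{-1}$ on $S$, one computes the cross term $\sum_{u,v}G(u,v)\mu(u)\mu^\star(v)=\mathrm{Cap}(S)^{-1}$ and the energy of $\mu^\star$ is $\mathrm{Cap}(S)^{-1}$, so expanding $0\leq \sum_{u,v}G(u,v)(\mu-\mu^\star)(u)(\mu-\mu^\star)(v)$ yields $\sum_{u,v}G(u,v)\mu(u)\mu(v)\geq \mathrm{Cap}(S)^{-1}$ for every probability measure $\mu$. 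This avoids KKT entirely and is the standard argument.
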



\begin{proof}[Proof of Propostion \ref{prop:lower_bound_expectation}]
Fix $x\in \Z^4$, $r\geq 1$ and $\alpha>0$.
	We assume that $\mathrm{Cap}(\eta\cap\Lambda(x,r))>0$ or else the proposition is trivial.
	We let $n=\lfloor r^2(\log r)^{-1/3}\rfloor$ and $N=\floor{\lambda r^2}$ where $\lambda\in(0,1/2)$ is a parameter that will later be taken to be a small constant.
 Let $V$ be a uniform random element of $\Lambda(x,3r)$, let $X=(X_m)_{m\geq 0}$ be a random walk started at $V$, and let $\bP$ denote the joint law of $V$ and $X$.
	Let $\sigma$ be the time at which $X^{N}$ hits $\eta\cap\Lambda(x,3r)$ and let $\tau$ be the time $X^{N}$ first exits $\Lambda(x,3r)$. Each of these stopping times is defined to be infinite if the relevant event does not occur before or at time $N$.
	We let $\mu$ be a measure which minimises the right hand side of \eqref{eq:capacity_formulation} when $S=\eta\cap\Lambda(x,r)$ and define the random variable
	\[A_r=\mathbbm{1}(\sigma<\tau,\abs{\LE(X^\sigma)}\leq n,\LE(X^\sigma) \text{ good})\sum_{w\in \eta\cap\Lambda(x,r)} \sum_{j=0}^{N}  \mu(w)\mathbbm{1}(X_j=w), \]
	where to save on notation we have and will abbreviate $(\alpha,r)$-good and $(\alpha,r)$-bad to  good and bad respectively. The weight $\mu$ is included in the definition of $A_r$ since it makes the second moment of $A_r$ easier to control; this is closely related to the theory of \emph{Martin capacity} as developed in \cite{MR1349175}.
	 An application of Wilson's algorithm implies that
	\[
	\mathbb{E}^\eta\abs{M_\alpha(x,r)}\geq \sum_{v\in\Lambda(x,3r)}\pb(A_r>0 \mid V=v)=\abs{\Lambda(x,3r)}\pb(A_r>0),
	\]
	so that to prove the proposition we need only demonstrate that there exists $\alpha_0,r_0>0$ such that
	\[
	\pb (A_r>0)\succeq r^{-2}\mathrm{Cap}(\eta\cap\Lambda(x,r))
	\]
	for every $\alpha \geq \alpha_0,r\geq r_0$, 
	where we emphasize that the constant implied by the $\succeq$ on the right hand side is independent of $\eta$ and $r$.
	We do so by  proving that
	\begin{multicols}{2}
		\noindent\begin{equation}\label{eq:lower_bound_exp_A}
	\mathbf{E}{A_r}\succeq r^{-2}
		\end{equation}
		\begin{equation} \label{eq:upper_bound_exp_A}
	\mathbf{E}{A_r^2}\preceq r^{-2}\mathrm{Cap}^{-1}(\eta\cap\Lambda(x,r))
		\end{equation}
	\end{multicols}
 \noindent for appropriately large $\alpha,r$ and an appropriately small constant value of $\lambda$; 
	once \eqref{eq:lower_bound_exp_A} and \eqref{eq:upper_bound_exp_A} are established the claim will follow since, by Cauchy-Schwartz,
	\[
	\mathbb{E}^\eta\abs{M_\alpha(x,r)}\succeq r^4\pb(A_r>0)\succeq r^4 \frac{\mathbf{E}[A_r]^2}{\mathbf{E}[A_r^2]}\succeq r^2 \mathrm{Cap}(\eta\cap\Lambda(x,r))
	\]
	as claimed.

\medskip

	 We begin by lower bounding the expectation of $A_r$.
	We decompose $A_r$ as 
		$A_r=E_r-D_r-C_r-B_r$,
	where
 	\begin{align*}
		B_r&=\mathbbm{1}(\sigma\geq\tau)\sum_{w\in \eta\cap\Lambda(x,r)} \sum_{j=0}^{N}  \mu(w)\mathbbm{1}(X_j=w),\\
		C_r&=\mathbbm{1}(\sigma<\tau,\abs{\LE(X^\sigma)}> n)\sum_{w\in \eta\cap\Lambda(x,r)} \sum_{j=0}^{N}  \mu(w)\mathbbm{1}(X_j=w),\\
		D_r&=\mathbbm{1}(\sigma<\tau,\abs{\LE(X^\sigma)}\leq n,\LE(X^\sigma) \text{ bad})\sum_{w\in \eta\cap\Lambda(x,r)} \sum_{j=0}^{N}  \mu(w)\mathbbm{1}(X_j=w), \qquad \text{ and }\\
		E_r&=\sum_{w\in \eta\cap\Lambda(x,r)} \sum_{j=0}^{N}  \mu(w)\mathbbm{1}(X_j=w).
	\end{align*}
	The random variable $E_r$ is the $\mu$-mass of the intersections of the random walk with the relevant part of $\eta$, i.e.\ $\eta\cap\Lambda(x,r)$. From $E_r$, we have subtracted the error term $B_r$ pertaining to the possibility that the walk exits the ball $\Lambda(x,3r)$ before hitting the relevant part of $\eta$; the term $C_r$ pertaining to the possibility that that the walk hits the relevant part of $\eta$ before exiting this ball, but has too long a loop erasure; and finally the term $D_r$ pertaining to the possibility that the walk hits the relevant part of $\eta$ before exiting this ball and has a suitably short loop erasure, but the loop erasure is \textit{bad}, as defined above.

\medskip

\noindent	\textbf{Lower bounding the expectation of $E_r$}: 
	First, we lower bound the expectation of $E_r$. We have by time-reversal that
	\begin{equation}\label{eq:Eineq}
	\begin{aligned}
		\mathbf{E}{[E_r]}&\geq \frac{1}{\abs{\Lambda(x,3r)}} \sum_{w\in \eta\cap\Lambda(x,r)} \mu(w) \sum_{j=0}^{N}  \sum_{v\in\Lambda(x,3r)}\pb_v(X_j=w)\\
		&\succeq r^{-4} \sum_{w\in \eta\cap\Lambda(x,r)} \mu(w) \sum_{j=0}^{N}\pb_w(X_j\in\Lambda(x,3r))\\
		&\succeq r^{-4}  \sum_{j=0}^{N}\pb_0(X_j\in\Lambda(0,2r))\succeq r^{-4} \sum_{j=0}^{N} 1 -\frac{j}{4r^2}\geq r^{-4}N\left(1-\frac{N}{4r^2}\right)\succeq \lambda r^{-2}(1-\lambda /4)\succeq \lambda r^{-2},
	\end{aligned}	
\end{equation}
	where the third inequality follows since $\sum_{w\in\eta\cap\Lambda(x,r)}\mu(w)=1$ and $\Lambda(w,2r)\subset\Lambda(x,3r)$ for $w\in\Lambda(x,r)$, the fourth inequality follows by e.g.\ the central limit theorem for the simple random walk, and the penultimate inequality holds if $r>1/\lambda$ (which is just the condition we need to avoid rounding $N$ down to zero).

\medskip

	\noindent \textbf{Upper bounding the expectation of $B_r$}:
	Next, we upper bound the expectation of $B_r$, which pertains to the possibility that the walk exits the ball $\Lambda(x,3r)$ before hitting the relevant part of $\eta$.
	We have 
	\begin{align*}
	\bE[B_r] &= \frac{1}{\abs{\Lambda(x,3r)}}   \sum_{v\in\Lambda(x,3r)}\sum_{w\in \eta\cap\Lambda(x,r)} \sum_{j=0}^{N}  \mu(w)\pb_v(X_j=w,\sigma\geq\tau)\\
	&\leq\frac{1}{\abs{\Lambda(x,3r)}}   \sum_{v\in\Lambda(x,3r)}\sum_{w\in \eta\cap\Lambda(x,r)} \sum_{j=0}^{N}  \mu(w)\pb_v(X_j=w,\tau\leq j)\\
	&\preceq  r^{-4} \sum_{v\in\Lambda(x,3r)}\sum_{w\in \eta\cap\Lambda(x,r)} \sum_{j=0}^{N}  \mu(w)\pb_w(X_j=v,\tau\leq j)\\
	&\preceq  r^{-4}N \sum_{w\in \eta\cap\Lambda(x,r)}  \mu(w) \pb_w(\tau\leq N)\preceq \lambda r^{-2} \bP_0\left(\sup_{0\leq i\leq N} \norm{X_i}_\infty\geq2r\right),
	\end{align*}
where the second inequality follows by time reversal of $X$, and the final inequality holds because the distance between any $w\in\eta\cap\Lambda(x,r)$ and $\partial\Lambda(x,3r)$ is greater than or equal to $2r$. Since $\mathbb{E}_o[\sup_{j\leq i}\norm{X_j}^2]\preceq i$ for $i\geq 0$, it follows by Markov's inequality that
\begin{equation}\label{eq:Bineq}
\bE[B_r]\preceq \lambda r^{-2} \frac{N}{r^2} \preceq \lambda^2 r^{-2}.
\end{equation}

\noindent \textbf{Upper bounding the expectation of $D_r$.}	
	We now upper bound the expectation of $D_r$, which pertains to the possibility that the walk hits the relevant part of $\eta$ before exiting this ball and has a suitably short loop erasure, but the loop erasure is bad.
		Observe that
	\begin{align*}
			\mathbf{E}[D_r]&\leq\mathbf{E}\left[\mathbbm{1}(\LE(X^\sigma)\text{ bad})\sum_{w\in\eta\cap \Lambda(x,r)} \sum_{j=0}^{N}  \mu(w)\mathbbm{1}(X_j=w)\right]
		\leq\mathbf{E}\left[\sum_{w\in\eta\cap \Lambda(x,r)} \sum_{j=0}^{N}  \mu(w)\mathbbm{1}(X\text{ bad}, X_j=w)\right]\\
		&\hspace{-1cm}\preceq r^{-4}\sum_{w\in\eta\cap \Lambda(x,r)} \sum_{j=0}^{N}  \mu(w)\sum_{v\in\Lambda(x,3r)} \pb_v(X\text{ bad}, X_j=w)\leq r^{-4}\sum_{w\in\eta\cap \Lambda(x,r)} \sum_{j=0}^{N}  \mu(w)\sum_v \pb_0(X\text{ bad}, X_j=w-v)\\ 
		&\hspace{-1cm}= r^{-4}\sum_{j=0}^N \pb_0(X \text{ bad})
		\preceq Nr^{-4}\pb_0(X\text{ bad})\preceq \alpha^{-1}Nr^{-4}\preceq\alpha^{-1}\lambda r^{-2},
	\end{align*}
where the second inequality follows as $\LE(X^\sigma)\subseteq X$, the fourth inequality follows by translation-invariance, and the penultimate inequality follows by \eqref{eq:bad_low_prob}.
	Combining this inequality with \eqref{eq:Bineq} and \eqref{eq:Eineq}, we can see that there exist positive constants $\alpha_0$ and $\lambda_0$ such that if $\alpha \geq \alpha_0$, $\lambda=\lambda_0$, and $r\geq 1/\lambda_0$ then
	\[
	\mathbf{E}[{E_r-D_r-B_r}]\succeq r^{-2}.
	\]
Thus, to complete the proof of \eqref{eq:lower_bound_exp_A}, it is sufficient to show that $\mathbf{E}{[C_r]}=o(r^{-2})$.

\medskip
 
 \noindent \textbf{Upper bounding the expectation of $C_r$}: 
 To bound the final term $C_r$, which pertains to the possibility that that the walk hits the relevant part of $\eta$ before exiting this ball, but has too long a loop erasure. We will need some understanding of the \textit{cut times} of a simple random walk. Recall that a time $t\geq 0$ is said to be a \textbf{cut time}, or \textit{loop-free time} of the random walk $X$ if $X[0,t]$ and $X(t,\infty)$ are disjoint. We observe that if $0 \leq s \leq t$ are cut times of $X$ then the loop-erasure of $X$ is equal to the concatenation of the loop-erasures of the portions of $X$ before $s$, between $s$ and $t$, and after $t$; this property allows us to decorrelate different parts of the loop-erased random walk.
 We use the following estimate of Lawler which demonstrates that the random walk on $\Z^4$ has a reasonably good supply of cut times.
 \begin{lemma}[\!\!\cite{MR1117680}, Lemma~7.7.4]
 	\label{lem:Lawlercuttimes}
 	\!Let $X$ be simple random walk on $\Z^4$. Then 
 	\[
 	\pb(\text{there are no cut times between times $n$ and $m$}) \preceq \frac{\log \log m}{\log m}.
 	\]
 	for every $3\leq n \leq m$ such that $|n-m| \geq m/(\log m)^6$.
 \end{lemma} 

Observe that if $\abs{\LE(X^\sigma)}>n$ then we must have that $\sigma>n$ and that if $X$ has a cut time in $[\sigma-n/4,\sigma]$, then $\abs{\LE(X^j)}\geq 3n/4$ for every $j\geq\sigma$. Therefore,
	\begin{equation}\label{eq:C_decomp}
		C_r \leq\sum_{w\in \eta\cap\Lambda(x,r)} \sum_{j=0}^{N}  \mu(w)\mathbbm{1}\Big(X_j=w,\abs{\LE(X^\sigma)}> n,n<\sigma\leq N\wedge j\Big)\leq C_r^\prime+C_r^{\prime\prime},
	\end{equation}
where 
	\begin{align*}
		C_r^\prime &=\sum_{w\in \eta\cap\Lambda(x,r)} \sum_{j=n+1}^{N}  \mu(w)\mathbbm{1}\Big(X_j=w,\text{$X$ has no cut time in $[\sigma-n/4,\sigma]$},n<\sigma\leq N\wedge j\Big)\qquad \text{and}\\
		C_r^{\prime\prime}&= \sum_{w\in \eta\cap\Lambda(x,r)} \sum_{j=n+1}^{N}  \mu(w)\mathbbm{1}\Big(X_j=w,\abs{\LE(X^j)}> \frac{3}{4}n\Big).
	\end{align*}
	We show that the expectation conditioned on $\eta$ of both $C_r^\prime$ and $C_r^{\prime\prime}$ is $o(r^{-2})$; we begin with the latter. We have
	\begin{align*}
		\mathbf{E} C_r^{\prime\prime}
		&\leq\frac{1}{\abs{\Lambda(x,3r)}}\sum_{w\in \eta\cap\Lambda(x,r)}\mu(w)\sum_{j=n}^N\sum_{v\in \Lambda(x,3r)} \pb_{v}\Big(X_j=w,\abs{\LE(X^j)}>\frac{3}{4}n\Big)\\
    &= \frac{1}{\abs{\Lambda(x,3r)}}\sum_{w\in \eta\cap\Lambda(x,r)}\mu(w)\sum_{j=n}^N\sum_{v\in \Lambda(x,3r)} \pb_{0}\Big(X_j=w-v,\abs{\LE(X^j)}>\frac{3}{4}n\Big)
    \\
    &\leq \frac{1}{\abs{\Lambda(x,3r)}}\sum_{w\in \eta\cap\Lambda(x,r)}\mu(w)\sum_{j=n}^N \pb_{0}\Big(\abs{\LE(X^j)}>\frac{3}{4}n\Big) \preceq r^{-4}\sum_{j=n}^N \pb_0\Big(\abs{\LE(X^j)}>\frac{3}{4}n\Big),
	\end{align*}
	where we used translation invariance in the second line.
	Observe for each $n\leq i\leq N$ that if $X$ has a cut time in $[i-i/(\log i)^6,i]$, then  $\abs{\LE(X^i)}\leq \abs{\LE_\infty(X^i)} +i/(\log i)^6$. Therefore,
	\begin{align} \label{eq:Cprimeprime_bound}
		\nonumber	 r^4\mathbf{E} C_r^{\prime\prime}&\preceq \sum_{i=n}^{N}  \pb_0\big(\abs{\LE(X^i)}> \frac{3}{4}n\big) \\
		\nonumber    &\leq \sum_{i=n}^N  \pb_0(\abs{\LE_\infty(X^i)}> (3/4)n-i/(\log i)^6 )+\pb_0\big(\text{$X$ has no cut times in $[i-i/(\log i)^6,i]$}\big)\\
		\nonumber  	 &\preceq  \sum_{i=n}^N  \pb_0(\rho_i> (3/4)n-i/(\log i)^6)+c\frac{\log\log i}{\log i}\\
		&\preceq N\frac{\log\log  n}{\log  n}+ \sum_{i=n}^N \frac{\log\log i}{(\log i)^{2/3} }\preceq  N\frac{\log\log n}{(\log n)^{2/3}}=o(r^2)
	\end{align}
	as required, where the third inequality follows by Lemma \ref{lem:Lawlercuttimes} and the fourth inequality follows from Theorem~\ref{theorem:LERWconc} and the fact that $\lambda <1/2$.
	Next, we upper bound the conditional expectation of $C_r^\prime$. Recalling the definitions $N=\floor{\lambda r^2}$ for some $\lambda\in(0,1/2)$ and $n=\floor{r^2(\log r)^{-1/3}}$, we can calculate that $N\leq n(\log n)^{1/3}$ for all $r\geq 2$. Define the sequence of times $T_k=\ceil{(1+k/8)n}$ for $k\geq 0$, and observe that for $r$ larger than some universal constant, if $n\leq\sigma\leq N$ and $X$ has no cut time in $[\sigma-n/4,\sigma]$, then $X$ has no cut times in at least one of the intervals belonging to the family $\{[T_k-T_k/(\log T_k)^6,T_k]:0\leq k\leq 8\ceil{(\log n)^{1/3}}\}$. Therefore, for $r$ larger than some universal  constant, we have that
	\begin{equation}\label{eq:Cprime_bound}
		C_r^\prime\leq \sum_{k=0}^{8\ceil{(\log n)^{1/3}}} \sum_{w\in \eta\cap\Lambda(x,r)} \sum_{j=n+1}^{N}  \mu(w)\mathbbm{1}\Big(X_j=w,\text{$X$ has no cut time in $[T_k-T_k/(\log T_k)^6,T_k]$}\Big).
	\end{equation}
We also have by symmetry that
  \begin{multline*}
\bP_x\Big(X_j=y,\text{$X$ has no cut time in $[T_k-T_k/(\log T_k)^6,T_k]$}\Big) \\= \bP_y\Big(X_j=x,\text{$X$ has no cut time in $[T_k-T_k/(\log T_k)^6,T_k]$}\Big) 
  \end{multline*}
  for each $x,y\in \Z^4$ and $j\geq 0$, so that for $r$ larger than some universal constant
	\begin{multline}\mathbf{E} C_r^\prime\leq \frac{N}{\abs{\Lambda(3x,r)}}\sum_{k=0}^{8\ceil{(\log n)^{1/3}}}   \pb_0\Big(\text{$X$ has no cut time in $[T_k,T_k-T_k/(\log T_k)^6]$}\Big)\\ 
		\preceq \lambda r^{-2}\sum_{k=0}^{8\ceil{(\log n)^{1/3}}} \frac{\log\log T_k}{\log T_k}\preceq \lambda r^{-2}(\log n)^{1/3}\frac{\log\log n}{\log n}=o(r^{-2}),
	\end{multline}
where the second inequality follows from Lemma \ref{lem:Lawlercuttimes}.
	We have now shown \eqref{eq:lower_bound_exp_A}, and so to complete  the proof we must show \eqref{eq:upper_bound_exp_A}, which upper bounds the second moment of $A$. 

\medskip

\noindent
	\textbf{Upper bounding the second moment of $A$.} It is at this stage of the proof that we benefit from defining $A$ in terms of the measure $\mu$.
Indeed, we can use the Markov property to compute that
	\begin{align*}
		\mathbf{E}{A_r^2}&\leq\mathbf{E}\left[\left(\sum_{i\geq0}\sum_{ w\in\eta\cap\Lambda(x,r)}\mu(w)\mathbbm{1}\big(X_i=w\big)\right)^2\right]
		\\&\leq 2\mathbf{E}\left[\sum_{i\geq 0}\sum_{w,z\in\eta\cap\Lambda(x,r)}\mu(w)\mu(z)\mathbbm{1}\big(X_i=w\big)\sum_{j\geq i}\mathbbm{1}\big(X_j=z\big)\right]\\
		&\asymp 2\mathbf{E}\left[\sum_{i\geq0}\sum_{ w,z\in\eta\cap\Lambda(x,r)}\mu(w)\mu(z)G(w,z)\mathbbm{1}\big(X_i=w\big)\right]\\
    &=\frac{2}{\abs{\Lambda(x,3r)}}\sum_{w,z\in\eta\cap\Lambda(x,r)}\mu(w)\mu(z)G(w,z)\sum_{i\geq 0}\sum_{v\in\Lambda(x,3r)}\bP_v\big(X_i=w\big),
  \end{align*}
  and hence by time-reversal that
  \begin{align*}
  \mathbf{E}{A_r^2}
		&\preceq r^{-4}
		\sum_{w,z\in\eta\cap\Lambda(x,r)}\mu(w)\mu(z)G(w,z)\sum_{i\geq 0}\mathbf{P}_w\big(X_i\in \Lambda(x,3r)\big)\\
		&\preceq r^{-2} \sum_{w,z\in\eta\cap\Lambda(x,r)}\mu(w)\mu(z)G(w,z)=r^{-2} \mathrm{Cap}^{-1}(\eta\cap \Lambda(x,r)),
	\end{align*}
   where the final inequality follows since the random walk spends at most $O(r^2)$ time in any ball of radius $r$ in expectation (which follows from the Green's function bound $G(x,y)\preceq \norm{x-y}_2^{-2}$ for $x\neq y$),
     and the final equality follows from the definition of $\mu$. This concludes the proof of \eqref{eq:upper_bound_exp_A} and hence the proof of the proposition. \qedhere
\end{proof}

We now turn to the proof of the variance estimate of Proposition~\ref{prop:upper_bound_variance}.
We will require the following lemma relating the capacity of a set $S$ to the probability that a random walk, started at a uniform position in a ball containing $S$, hits $S$. The lemma will follow straightforwardly from \cite[Theorem 2.2]{MR1349175} and Lemma \ref{lem:capacity_formulation}. We prove the result in all dimensions $d\geq 3$ for completeness; the implicit constants may depend on $d$.
\begin{lemma} \label{lem:uniform_point_capacity}
	Fix a dimension $d\geq 3$, a radius $r\geq 1$, and let $S\subseteq\Lambda(r):=\{x\in\Z^d:\norm{x}_\infty\leq r\}$. Let $X$ be a simple random walk on $\Z^d$. Then 
	\[
	\sum_{x\in\Lambda(r)}\pb_x(X \ \mathrm{hits}\ S)\asymp r^{2}\mathrm{Cap}(S).
	\]
\end{lemma}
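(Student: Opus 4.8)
The plan is to rewrite the sum of hitting probabilities as a single sum of Green's function values by means of the equilibrium measure, and then to estimate that sum by an elementary random walk computation. Let $e_S$ denote the equilibrium (capacitary) measure of $S$, defined by $e_S(y)=\deg(y)\,\pb_y(\text{the walk never returns to }S\text{ after time zero})$ for $y\in S$, so that $\sum_{y\in S}e_S(y)=\mathrm{Cap}(S)$ by definition. Decomposing the event $\{X\text{ hits }S\}$ according to the \emph{last} time $n$ at which the walk is in $S$ (a.s.\ finite since $d\geq3$), writing this event as $\{X_n\in S,\ X_m\notin S\text{ for all }m>n\}$, and applying the ordinary Markov property at the deterministic time $n$, one obtains the classical identity
\[
\pb_x(X\text{ hits }S)=\sum_{y\in S}G(x,y)\,e_S(y)\qquad\text{for every }x\in\Z^d,
\]
which is the potential-theoretic counterpart of the variational characterisation of $\mathrm{Cap}(S)$ in Lemma~\ref{lem:capacity_formulation} and can also be extracted from \cite[Theorem~2.2]{MR1349175}. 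Summing over $x\in\Lambda(r)$ and interchanging the order of summation, the lemma reduces to the estimate
\[
\sum_{x\in\Lambda(r)}G(x,y)\asymp r^2\qquad\text{uniformly over }y\in\Lambda(r),
\]
which coincides with the asserted $r^{d-2}$ in the case $d=4$ that is actually used; granting this, $\sum_{x\in\Lambda(r)}\pb_x(X\text{ hits }S)=\sum_{y\in S}e_S(y)\sum_{x\in\Lambda(r)}G(x,y)\asymp r^2\,\mathrm{Cap}(S)$, as required.

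The displayed Green's function estimate follows from the standard two-sided bound $G(x,y)\asymp(1+\norm{x-y})^{2-d}$ on $\Z^d$. For the upper bound, $\Lambda(r)\subseteq\Lambda(y,2r)$ for every $y\in\Lambda(r)$, so $\sum_{x\in\Lambda(r)}G(x,y)\leq\sum_{z\in\Lambda(2r)}G(0,z)\preceq\sum_{k=0}^{2r}k^{d-1}(1+k)^{2-d}\asymp r^2$. The one point needing care is the uniformity of the matching lower bound when $y$ is close to $\partial\Lambda(r)$, since then $\Lambda(y,r)\nsubseteq\Lambda(r)$: to handle this, note that for any $y\in\Lambda(r)$ one can choose, coordinate by coordinate, an axis-parallel box $Q\subseteq\Lambda(r)$ of side length $\lfloor r\rfloor$ having $y$ as a corner (in coordinate $i$, take the length-$\lfloor r\rfloor$ integer interval with endpoint $y_i$ that is contained in $[-r,r]$), and then $\sum_{x\in\Lambda(r)}G(x,y)\geq\sum_{x\in Q}G(x,y)\succeq\sum_{z\in\{0,\dots,\lfloor r\rfloor\}^d}(1+\norm{z})^{2-d}\asymp r^2$. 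This completes the reduction and hence the proof.

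For the lower bound alone one could instead avoid the equilibrium measure and argue via \cite[Theorem~2.2]{MR1349175} pointwise: for each fixed $x\in\Lambda(r)$ the probability $\pb_x(X\text{ hits }S)$ is comparable to the capacity of $S$ with respect to the Martin kernel $K_x(u,v)=G(u,v)/G(x,v)$, and since $G(x,v)\succeq r^{2-d}$ for all $x,v\in\Lambda(r)$, feeding the minimiser of \eqref{eq:capacity_formulation} into the variational formula for the $K_x$-capacity gives $\pb_x(X\text{ hits }S)\succeq r^{2-d}\,\mathrm{Cap}(S)$ for every such $x$; summing the resulting $\asymp r^d$ bounds recovers $\succeq r^2\,\mathrm{Cap}(S)$. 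I would use the equilibrium-measure identity in the main argument since it yields both bounds simultaneously. Overall the lemma is routine, the box-fitting argument for $y$ near $\partial\Lambda(r)$ being the only step that needs attention.
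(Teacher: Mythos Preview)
Your proof is correct and takes a genuinely different route from the paper. The paper proceeds by introducing a ghost vertex $*$ that jumps uniformly into $\Lambda(r)$, then applies \cite[Theorem~2.2]{MR1349175} (the Martin-capacity two-sided bound) to the augmented chain started at $*$, and finally evaluates $G(*,y)$ via the same integral comparison you use. Your approach is more elementary: the last-exit identity $\pb_x(X\text{ hits }S)=\sum_{y\in S}G(x,y)e_S(y)$ is an \emph{exact} formula rather than a two-sided estimate, so you bypass both the ghost-vertex construction and the appeal to \cite{MR1349175}, reducing the lemma directly to the uniform estimate $\sum_{x\in\Lambda(r)}G(x,y)\asymp r^2$. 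The box-fitting argument you give for points $y$ near $\partial\Lambda(r)$ is exactly the care the paper's integral comparison implicitly needs as well. Your alternative lower-bound sketch via pointwise Martin capacity is essentially the paper's method applied without the ghost vertex.

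You have also quietly caught a genuine issue with the statement: your computation gives $\sum_{x\in\Lambda(r)}G(x,y)\asymp r^2$ in every dimension $d\geq 3$, so the lemma should read $r^2\,\mathrm{Cap}(S)$ rather than $r^{d-2}\,\mathrm{Cap}(S)$ for general $d$ (one can check this against the singleton $S=\{0\}$, where the left side is $\asymp r^2$ while $r^{d-2}\mathrm{Cap}(\{0\})\asymp r^{d-2}$). As you note, the two expressions coincide in the case $d=4$ that the paper actually uses, so nothing downstream is affected.
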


\begin{proof}[Proof of Lemma~\ref{lem:uniform_point_capacity}]
	\cite[Theorem 2.2]{MR1349175} states that for any transient Markov chain $(X_n)_{n\geq 0}$ on a countable state space $\Omega$ with initial state $\rho$ and Green's function\footnote{Note that, unlike in the rest of the paper, the Green's function does not include normalization by $\deg(y)^{-1}$, such normalization being unappropriate for non-reversible chains. Since all our estimates hold only to within multiplicative constants, the distinction is not important.} $G(x,y)=\sum_{n\geq 0}\pb_x(X_n=y)$, we have that
	\[
	\pr_\rho(\text{$X$ hits $S$})\asymp\inf_{\mu} \left[\sum_{x,y\in S}\mu(x) \frac{G(x,y)}{G(\rho,y)}\mu(y)\right]^{-1},
	\]
	for any subset $S\subseteq \Omega$, where the infimum on the right hand side is taken over probability measures on $S$. We would like to apply this result with $X$ a simple random walk on state space $\Z^d$, however, we would like the walk to start at a random vertex. To achieve this, we attach a `ghost vertex' to the state space from which the random walk will start. We set up the transition probabilities from the ghost vertex so that after one step, the walk's distribution on $\Z^d$ is equal to that which we desire.
	
	Define the set $\Z^d_*=\Z^d\cup\{*\}$, where $*$ is the additional ghost vertex, and define the Markov transition kernel $p$ on the state space $S$ by $p(x,y)=\frac{1}{8}\mathbbm{1}(x\sim y)$ for $x,y\in\Z^d$ and $p(*,z)=1/\abs{\Lambda}$ for $z\in\Lambda:=\Lambda(r)$. 
	Note that a trajectory of this chain, which we will denote by $X$, is just a simple random walk on $\Z^d$ when started in $\Z^d$. We observe that
	\begin{equation}\label{eq:KCap}
	\frac{1}{\abs{\Lambda}}\sum_{x\in\Lambda}\pb_x(X \ \mathrm{hits}\ S)=\pb_*(X \ \mathrm{hits}\ S)\asymp \inf_{\mu} \left[\sum_{x,y\in S}\mu(x) \frac{G(x,y)}{G(*,y)}\mu(y)\right]^{-1},
	\end{equation}
	for any subset $S\subseteq \Lambda$.
	An integral comparison yields that 
	\[
	G(*,y)\asymp\frac{1}{\abs{\Lambda}}\sum_{x\in\Lambda} \frac{1}{(1\vee\norm{x-y}_\infty)^{d-2}}\asymp r^{2-d},
	\]	
	for $y\in\Lambda$, and so
	\[\inf_{\mu} \left[\sum_{x,y\in S}\mu(x) \frac{G(x,y)}{G(*,y)}\mu(y)\right]^{-1}\asymp r^{2-d}\inf_{\mu} \left[\sum_{x,y\in S}\mu(x) G(x,y)\mu(y)\right]^{-1}.
	\]
	Substituting this into \eqref{eq:KCap} and applying Lemma \ref{lem:capacity_formulation}, we get
	\[
	\sum_{x\in\Lambda(r)}\pb_x(X \ \mathrm{hits}\ S)\asymp r^{2}\inf_{\mu} \left[\sum_{x,y\in S}\mu(x) G(x,y)\mu(y)\right]^{-1}\asymp r^{2}\mathrm{Cap}(S)
	\]
  as claimed. (We do not have an exact equality on the right hand side because we are using a slightly different definition of the Green's function than usual.)
\end{proof}

\begin{proof}[Proof of Proposition \ref{prop:upper_bound_variance}]
Given $y,z\in \Z^4$,
	let $Y$ be a random walk started at $y$  and let $Z$ be an independent random walk started at $z$ and write $\bP_{y,z}$ for the joint law of $Y$ and $Z$. Let $\sigma_1$ be the first time $Y$ hits $\eta$ and let $\sigma_2$ be the first time $Z$ hits $\eta\cup\LE(Y^{\sigma_1})$. 
	We continue to write $n=\lceil r^2 (\log r)^{-1/3}\rceil$ as in the previous proof.
	Abbreviating $M=M_\alpha$, $\Lambda=\Lambda(x,3r)$, we have by Wilson's algorithm that
	\begin{equation}\label{eq:expand_the_square}
		\begin{aligned}
			\mathbb{E}^\eta[\abs{M(x,r)}^2]&\leq\sum_{{y,z}\in \Lambda}\pr^\eta\!\big(y,z\in M(x,r)\big)\\
			&\leq	\sum_{{y,z}\in \Lambda} \pb_{y,z}(\sigma_1<\infty,\sigma_2<\infty,\abs{\LE(Y^{\sigma_1})}\leq n,\abs{\LE(Z^{\sigma_2})}\leq n,\\
			&\hspace{2cm}\LE(Y^{\sigma_1})\subseteq \Lambda, \LE(Z^{\sigma_2})\subseteq \Lambda,\LE(Y^{\sigma_1}),\LE(Z^{\sigma_2}) \text{ both good}).
		\end{aligned}
	\end{equation}
	Now, on the event that $\sigma_1,\sigma_2<\infty$, let $\sigma_3$ be the time $Z$ first hits $\LE(Y^{\sigma_1})$ and let $\sigma_4$ be the time $Z$ first hits $\eta$.  We split according to whether $\sigma_3\leq \sigma_4$ or $\sigma_4<\sigma_3$, beginning with the case $\sigma_4<\sigma_3$. Observing that $\sigma_2=\sigma_4$ on this event, we obtain
	\begin{align}\label{eq:exp_squared}
	&\sum_{{y,z}\in \Lambda} \pb_{y,z}(\sigma_1<\infty,\,\sigma_2<\infty,\,\abs{\LE(Y^{\sigma_1})}\leq n,\,\abs{\LE(Z^{\sigma_2})}\leq n,
  \nonumber\\
&\phantom{\sum_{{y,z}\in \Lambda}}\hspace{4cm}\LE(Y^{\sigma_1})\subseteq \Lambda,\, \LE(Z^{\sigma_2})\subseteq \Lambda,\LE(Y^{\sigma_1}),\,\LE(Z^{\sigma_2}) \text{ both good},\text{ and }\sigma_4<\sigma_3).
  \nonumber\\
	&\phantom{\sum_{{y,z}\in \Lambda}}\leq \sum_{{y,z}\in \Lambda} \pb_{y,z}(\sigma_1<\infty,\,\sigma_4<\infty,\,\abs{\LE(Y^{\sigma_1})}\leq n,\,\abs{\LE(Z^{\sigma_4})}\leq n,
  \nonumber\\
	&\phantom{\sum_{{y,z}\in \Lambda}}\hspace{4cm}\LE(Y^{\sigma_1})\subseteq \Lambda,\, \LE(Z^{\sigma_4})\subseteq \Lambda, \text{ and }\LE(Y^{\sigma_1}),\LE(Z^{\sigma_4}) \text{ both good}).
	\nonumber\\
	&\phantom{\sum_{{y,z}\in \Lambda}}=\sum_{{y,z}\in \Lambda} \pb_{y}(\sigma_1<\infty,\,\abs{\LE(Y^{\sigma_1})}\leq n,\,\LE(Y^{\sigma_1})\subseteq \Lambda, \text{ and } \LE(Y^{\sigma_1})\text{ good})
  \nonumber\\
 	&\phantom{\sum_{{y,z}\in \Lambda}}\hspace{3.75cm}\cdot\pb_{z}(\sigma_4<\infty,\,\abs{\LE(Y^{\sigma_4})}\leq n,\,\LE(Y^{\sigma_4})\subseteq \Lambda,\,\LE(Y^{\sigma_4})\text{ good})
  \nonumber\\
 	&\phantom{\sum_{{y,z}\in \Lambda}}=\left[\sum_{y\in \Lambda}\pb_{y}(\sigma_1<\infty,\abs{\LE(Y^{\sigma_1})}\leq n ,\LE(Y^{\sigma_1})\subseteq \Lambda,\LE(Y^{\sigma_1})\text{ good})\right]^2
   =\mathbb{E}^\eta[\abs{M(y,r)}]^2,
	\end{align}
where the first equality follows by independence of $Y$ and $Z$ conditional on $\eta$, and the last follows by an application of Wilson's algorithm.
On the other hand, 
 if $\sigma_3\leq\sigma_4$ then $\sigma_2=\sigma_3$, and so we get
\begin{align}\label{eq:sq_remainder}
	&\sum_{{y,z}\in \Lambda} \pb_{y,z}(\sigma_1<\infty,\sigma_2<\infty,\abs{\LE(Y^{\sigma_1})}\leq n,\abs{\LE(Z^{\sigma_2})}\leq n,
  \nonumber\\
	&\hspace{5.5cm}\LE(Y^{\sigma_1})\subseteq \Lambda, \LE(Z^{\sigma_2})\subseteq \Lambda,\LE(Y^{\sigma_1}),\LE(Z^{\sigma_2}) \text{ both good},\sigma_3\leq\sigma_4)
  \nonumber\\
	&\hspace{1cm}\leq\sum_{y\in \Lambda} \mathbf{E}_y\Bigg[\mathbbm{1}(\sigma_1<\infty,\abs{\LE(Y^{\sigma_1})}\leq n,\LE(Y^{\sigma_1})\subseteq \Lambda,\LE(Y^{\sigma_1}) \text{ good}) \sum_{z\in \Lambda} \pb_{y,z}(\sigma_3<\infty \mid Y) \Bigg]
  \nonumber\\
		&\hspace{1cm}\leq \alpha \frac{r^4}{\log r}	\sum_{y\in \Lambda}\pb_{y}(\sigma_1<\infty,\abs{\LE(Y^{\sigma_1})}\leq n ,\LE(Y^{\sigma_1})\subseteq \Lambda,\LE(Y^{\sigma_1})\text{ good})
	\nonumber\\
  &\hspace{1cm}= \alpha \frac{r^4}{\log r}\mathbb{E}^\eta\abs{M(x,r)},
\end{align}
where the final inequality follows by the definition of `good', and the final equality follows by an application of Wilson's algorithm.
Substituting \eqref{eq:sq_remainder} and \eqref{eq:exp_squared} into \eqref{eq:expand_the_square} with a union bound yields
	\begin{equation*}
		\mathbb{E}^\eta[\abs{M(x,r)}^2]\leq \mathbb{E}^\eta[\abs{M(x,r)}]^2+\alpha \frac{r^4}{\log r}\mathbb{E}^\eta\abs{M(x,r)}
	\end{equation*}
and hence that
    \begin{equation}\label{eq:divide_using_good}
    \operatorname{Var}^\eta(\abs{M(x,r)})\leq \alpha \frac{r^4}{\log r}\mathbb{E}^\eta\abs{M(x,r)}.
  \end{equation}
Finally we upper bound $\mathbb{E}^\eta\abs{M(x,r)}$. We have that
	\[\mathbb{E}^\eta |M(x,r)|\leq \sum_{y\in \Lambda} \pb_y(X \text{ hits } \eta\cap\Lambda),
	\]
so that applying Lemma \ref{lem:uniform_point_capacity} to the right hand side and plugging the resulting inequality into \eqref{eq:divide_using_good} concludes the proof.
\end{proof}

Our next goal is to deduce Proposition~\ref{prop:volume_lower} from Propositions~\ref{prop:lower_bound_expectation} and \ref{prop:upper_bound_variance}.
To proceed we will need the following result controlling the capacity of the first $n$ steps of a loop-erased random walk which follows easily from \cite[Proposition 3.4]{hutchcroft2020logarithmic}\footnote{As pointed out to us by the referee, the proof of \cite[Proposition 3.4]{hutchcroft2020logarithmic} contains a minor error, which we now explain how to correct. In the sentence beginning ``Using the trivial bound $\operatorname{Var}(\operatorname{Cap}(\mathsf{LE}(X[I_k]))\leq |I_k|^2$ \ldots'', this trivial bound is not actually strong enough to deduce the next stated bound using Chebyshev's inequality, and in fact the bound one does obtain via this method is too weak to be used to prove the proposition. To fix this, one can instead bound $\operatorname{Var}(\operatorname{Cap}(\mathsf{LE}(X[I_k]))$ by the second moment of the capacity of the random walk $X[I_k]$, which is bounded in \cite[Lemma 2.5]{hutchcroft2020logarithmic} and Theorem \ref{theorem:LERWconc}}.
\begin{proposition}\label{prop:cap_bound}
	Let $X$ be a random walk on $\Z^4$ started at the origin. There exists a constant $C>0$ such that we have
	\[
	\pb\left(\mathrm{Cap}(\LE(X)^n)\leq \frac{Cn}{(\log n)^{2/3}}\right)\preceq \frac{\log\log n}{(\log n)^{2/3}},
	\]
	for every $n\geq 2$.
\end{proposition}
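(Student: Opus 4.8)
The plan is to deduce the bound from the deterministic-time capacity estimate for loop-erased random walk in \cite[Proposition 3.4]{hutchcroft2020logarithmic} by using the concentration estimate of Theorem~\ref{theorem:LERWconc} to control the number of simple random walk steps needed to produce the first $n$ points of $\LE(X)$. The underlying heuristic is that the first $n$ points of the infinite loop-erasure $\LE(X)$ are generated by roughly $\ell_n \approx n(\log n)^{1/3}$ simple random walk steps, and the capacity of a loop-erased path generated by $m$ simple random walk steps is of order $m/\log m$; substituting $m \asymp n(\log n)^{1/3}$ gives capacity of order $n(\log n)^{1/3}/\log n = n/(\log n)^{2/3}$, as claimed. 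I will use the natural phrasing of \cite[Proposition 3.4]{hutchcroft2020logarithmic}, namely that there is a constant $c>0$ such that $\mathrm{Cap}(\LE_\infty(X^m)) \ge cm/\log m$ off an event of probability at most of order $\frac{\log\log m}{(\log m)^{2/3}}$.

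The only structural point to establish is the identity $\LE(X)^n = \LE_\infty(X^{\ell_n})$, together with the fact that $k \mapsto \LE_\infty(X^k)$ is non-decreasing as a family of initial segments of $\LE(X)$. Both are immediate from the definitions: since $\LE_\infty(X^k) = \LE(X)^{\rho_k}$ with $(\ell_j)_{j\ge0}$ and $(\rho_j)_{j\ge0}$ mutually inverse, $k \le k'$ gives $\rho_k \le \rho_{k'}$ and hence $\LE_\infty(X^k) \subseteq \LE_\infty(X^{k'})$ as vertex sets; and $\rho_{\ell_n} = n$ because $\ell_{n+1} \ge \ell_n + 1 > \ell_n$ forces $\rho_{\ell_n}$ to equal $n$, so that $\LE_\infty(X^{\ell_n}) = \LE(X)^{\rho_{\ell_n}} = \LE(X)^n$. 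With this in hand the proof runs as follows. Set $m = m(n) := \lceil \tfrac12 n(\log n)^{1/3} \rceil$. Applying \cite[Proposition 3.4]{hutchcroft2020logarithmic} at the deterministic time $m$ gives a constant $c>0$ with
\[
\pb\!\left( \mathrm{Cap}\bigl(\LE_\infty(X^m)\bigr) < \frac{cm}{\log m} \right) \preceq \frac{\log\log m}{(\log m)^{2/3}} \asymp \frac{\log\log n}{(\log n)^{2/3}},
\]
and since $\log m = \log n + O(\log\log n) \le 2\log n$ for $n$ large we have $cm/\log m \ge C n/(\log n)^{2/3}$ for a suitable constant $C>0$. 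By Theorem~\ref{theorem:LERWconc} with $\varepsilon = \tfrac12$,
\[
\pb\bigl(\ell_n < m\bigr) \le \pb\!\left( \left\lvert \frac{\ell_n}{n(\log n)^{1/3}} - 1 \right\rvert > \tfrac12 \right) \preceq \frac{\log\log n}{(\log n)^{2/3}}.
\]
Off the union of these two events we have $m \le \ell_n$, hence $\LE_\infty(X^m) \subseteq \LE_\infty(X^{\ell_n}) = \LE(X)^n$, so by monotonicity of capacity under inclusion — transparent from the variational formula of Lemma~\ref{lem:capacity_formulation}, since every probability measure on the smaller set is one on the larger set — we get $\mathrm{Cap}(\LE(X)^n) \ge \mathrm{Cap}(\LE_\infty(X^m)) \ge Cn/(\log n)^{2/3}$. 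A union bound over the two bad events completes the proof for $n$ large, hence for all $n \ge 2$ after adjusting the implied constant.

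There is no real obstacle here; the one mild subtlety is that the quantities to which Theorem~\ref{theorem:LERWconc} and \cite[Proposition 3.4]{hutchcroft2020logarithmic} apply are indexed by the number of simple random walk steps, whereas $\LE(X)^n$ is indexed by the number of loop-erased steps, and one must pass between the two at the \emph{random} time $\ell_n$. Because we only need a lower bound on $\mathrm{Cap}(\LE(X)^n)$, it suffices to have a one-sided lower bound on $\ell_n$ and to invoke monotonicity of capacity along the increasing family $(\LE_\infty(X^k))_{k\ge 0}$; no upper control on $\ell_n$, and in particular no summation over the possible values of $\ell_n$, is required.
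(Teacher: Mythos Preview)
Your proof is correct and follows essentially the same route as the paper: both use the identity $\LE(X)^n=\LE_\infty(X^{\ell_n})$, invoke Theorem~\ref{theorem:LERWconc} to replace the random time $\ell_n$ by a deterministic $m\asymp n(\log n)^{1/3}$ via monotonicity of capacity, and then apply \cite[Proposition~3.4]{hutchcroft2020logarithmic} at that deterministic time. The only cosmetic differences are your explicit choice $\varepsilon=\tfrac12$ (the paper fixes a generic $\varepsilon\in(0,1/3)$) and your slightly weaker phrasing of the probability bound from \cite[Proposition~3.4]{hutchcroft2020logarithmic}; neither affects the argument.
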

\begin{proof}
	By \cite[Proposition 3.4]{hutchcroft2020logarithmic}, we know that there exists a constant $c$ such that
	\[
	\pb\left(\textrm{Cap}(\LE_\infty(X^n))\leq \frac{cn}{\log n}\right)\preceq \frac{1}{(\log n)^{2/3}},
	\]
	for each $n\geq 2$.
	Fix $\epsilon\in(0,1/3)$. Employing a union bound and the fact that capacity is increasing, we obtain 
	\begin{align*}
		\pb\left(\textrm{Cap}(\LE(X)^n)\leq \frac{C n}{(\log n)^{2/3}}\right)&=\pb\left(\textrm{Cap}(\LE_{\infty}(X^{\ell_n}))\leq \frac{C n}{(\log n)^{2/3}}\right)\\ 
		&\leq\pb\left(\textrm{Cap}(\LE_{\infty}(X^{(1-\epsilon)n(\log n)^{1/3}}))\leq \frac{C n}{(\log n)^{2/3}}\right)
		+\pb\left(\left\vert\frac{\ell_n}{n(\log n)^{1/3}}-1\right\vert>\epsilon\right)\\
		&\preceq \frac{1}{(\log n)^{2/3}}+\frac{\log\log n}{(\log n)^{2/3}}\preceq\frac{\log\log n}{(\log n)^{2/3}}	
	\end{align*}
	when we choose $C<c(1-\epsilon)$.
\end{proof}

We will also use the following covering lemma, whose proof we defer to the end of the section.

\begin{lemma}\label{lemma:disjoint_balls_capacity}
	Let  $S$ be a finite subset of $\Z^4$, and let $r\geq 1$.
	Then there exists an integer $K$ and points $\{x_i:1\leq i\leq K\}\subseteq\Z^4$ such that the balls $\Lambda(x_i,3r)$ are disjoint, $\{x_i\}_{1\leq i\leq K}\subseteq S+\Lambda(r)$, and
	\begin{itemize}
		\item $\sum_{i=1}^K \mathrm{Cap}(S\cap \Lambda(x_i,r))\geq 5^{-4}\mathrm{Cap}(S)$, and
		\item $\sum_{i=1}^K \mathrm{Cap}(S\cap \Lambda(x_i,3r))\leq 21^4\sum_{i=1}^K \mathrm{Cap}(S\cap \Lambda(x_i,r))$.
	\end{itemize}
\end{lemma}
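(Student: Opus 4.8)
The plan is to run a greedy ball-selection procedure, choosing balls $\Lambda(x_i,3r)$ one at a time so that they capture a constant fraction of the capacity of $S$ while remaining disjoint. The natural tool is the standard Vitali-type covering argument together with the subadditivity of capacity. First I would record two elementary facts about capacity on $\Z^4$: (i) $\mathrm{Cap}(\cdot)$ is monotone and countably subadditive, so in particular $\mathrm{Cap}(S) \le \sum_j \mathrm{Cap}(S \cap Q_j)$ whenever $\{Q_j\}$ covers $S$; and (ii) if we tile $\Z^4$ by a grid of disjoint cubes of side length $\asymp r$, then any ball $\Lambda(y,r)$ meets only $O(1)$ of these cubes, and conversely $S+\Lambda(r)$ is covered by $O(\mathrm{Cap}(S) \cdot \text{something})$... — actually the cleaner route avoids counting cubes and instead uses a greedy maximisation directly on the capacity functional.

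Here is the greedy scheme I would actually carry out. Let $\mathcal{C}(y) := \mathrm{Cap}(S \cap \Lambda(y,r))$ for $y \in S+\Lambda(r)$; this is a bounded nonnegative function that is zero off a finite set. Pick $x_1$ to (approximately) maximise $\mathcal{C}(\cdot)$, then having chosen $x_1,\dots,x_{j}$, pick $x_{j+1}$ to (approximately) maximise $\mathcal{C}(\cdot)$ among points whose ball $\Lambda(\cdot,3r)$ is disjoint from all of $\Lambda(x_1,3r),\dots,\Lambda(x_j,3r)$, stopping when no admissible point has positive $\mathcal{C}$. The procedure terminates after finitely many steps $K$ since $S$ is finite. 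By construction the balls $\Lambda(x_i,3r)$ are disjoint and $\{x_i\} \subseteq S+\Lambda(r)$. For the first bulleted estimate: every point $y \in S+\Lambda(r)$ with $\mathcal{C}(y)>0$ has $\Lambda(y,3r)$ intersecting some $\Lambda(x_i,3r)$ — otherwise $y$ would have been an admissible choice at termination — and the greedy maximality forces $\mathcal{C}(y) \le \mathcal{C}(x_i)$ for the step at which $x_i$ was the most recent such competitor (the usual greedy/Vitali comparison: $y$ was available when $x_i$ was chosen). Now $\Lambda(y,3r) \cap \Lambda(x_i,3r) \neq \emptyset$ implies $\Lambda(y,r) \subseteq \Lambda(x_i,9r) \subseteq \Lambda(x_i, 3\cdot 3r)$, and more importantly each $\Lambda(x_i,3r)$ can only be ``hit in this way'' by $y$'s lying in $\Lambda(x_i,9r)$, which is covered by $O(1)$ — in fact $3^4$ — translates of a ball of radius $3r$. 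Covering $S$ by the balls $\{\Lambda(y,r) : y\in S\}$ and then grouping these $y$'s according to which $x_i$ they are comparable to, subadditivity of capacity gives
\[
\mathrm{Cap}(S) \le \sum_{y \in S} \mathcal{C}(y) \;\text{(via subadditivity applied to a covering of }S\text{)} \;\le\; 3^4 \sum_{i=1}^K \mathcal{C}(x_i) = 3^4 \sum_{i=1}^K \mathrm{Cap}(S\cap\Lambda(x_i,r)),
\]
which is the first claim after rearranging. (The constant $3^4$ comes from the number of radius-$3r$ cubes needed to cover $\Lambda(x_i,9r)$.) For the second bulleted estimate, $\Lambda(x_i,3r) \subseteq \Lambda(x_i,5\cdot r) \cdot$ — more simply, $\Lambda(x_i,3r)$ is covered by at most $5^4$ translates $\Lambda(x_i + v_k, 3r/5 \cdot \ldots)$... the clean statement: a cube of side $6r$ is covered by $5^4$ cubes of side $\lceil 6r/5\rceil \le 2r$, hence $\Lambda(x_i,3r)$ is covered by $15^4$... — here I would just invoke that $\Lambda(x_i,3r)$ is the union of $15^4$ cubes each contained in a translate of $\Lambda(r)$ centred at a point of $S+\Lambda(r)$ (using $3r/r$ with the extra factor $5$ accounting for the fact we want the small cubes centred in $S+\Lambda(r)$ and genuinely of radius $\le r$), and apply subadditivity once more: $\mathrm{Cap}(S\cap\Lambda(x_i,3r)) \le 15^4 \max_{w} \mathrm{Cap}(S\cap\Lambda(w,r))$ over the relevant finite collection, and since all those $w$ have $\Lambda(w,3r)$ meeting $\Lambda(x_i,3r)$ at a time no later than step $i$, greedy maximality gives $\mathrm{Cap}(S\cap\Lambda(w,r)) \le \mathrm{Cap}(S\cap\Lambda(x_i,r))$, so $\mathrm{Cap}(S\cap\Lambda(x_i,3r)) \le 15^4\,\mathrm{Cap}(S\cap\Lambda(x_i,r))$; summing over $i$ finishes it.

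The main obstacle — and the part requiring genuine care rather than boilerplate — is making the greedy comparison ``$\mathcal{C}(w) \le \mathcal{C}(x_i)$ whenever $\Lambda(w,3r)$ meets $\Lambda(x_i,3r)$ and $w$ was admissible at step $i$'' airtight, because $w$ may first become comparable to an $x_i$ chosen at a late step, and one must check $w$ was genuinely available (i.e.\ $\Lambda(w,3r)$ disjoint from all earlier-chosen balls) at that moment. The standard fix is to let $i(w)$ be the \emph{first} index such that $\Lambda(w,3r) \cap \Lambda(x_{i(w)},3r) \neq \emptyset$; then for all $j < i(w)$ the balls $\Lambda(w,3r)$ and $\Lambda(x_j,3r)$ are disjoint, so $w$ was admissible at step $i(w)$ and hence $\mathcal{C}(w) \le \mathcal{C}(x_{i(w)})$ by maximality, and every $w$ with $\mathcal{C}(w)>0$ has some such $i(w)$ by the termination condition. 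With the bookkeeping organised around $i(w)$ and using that each $x_i$ serves as $x_{i(w)}$ for only $O(1)$-many ``grid cells'' worth of $w$, both inequalities drop out cleanly. If one prefers to avoid any approximate-maximiser fuss, note that since $\mathcal{C}$ takes finitely many values one can take exact maximisers at each step, which I would do. I would also remark that only crude constants are needed and no attempt is made to optimise $3^{-4}$ and $15^4$.
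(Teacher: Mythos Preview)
Your greedy/Vitali idea is the right one and matches the paper's strategy, but both bulleted inequalities contain genuine gaps as written.

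For the first inequality, you write $\mathrm{Cap}(S)\le\sum_{y\in S}\mathcal{C}(y)\le 3^4\sum_i\mathcal{C}(x_i)$, grouping the $y\in S$ by $i(y)$. The second step is false: the number of $y\in S$ with $i(y)=i$ is not $O(1)$ but can be as large as $|S\cap\Lambda(x_i,6r)|\asymp r^4$. For instance if $S=\Lambda(0,r/2)$ then $\mathcal{C}(y)\asymp r^2$ for every $y\in S$, so $\sum_{y\in S}\mathcal{C}(y)\asymp r^6$, while the greedy selects only $x_1=0$ with $\mathcal{C}(x_1)\asymp r^2$. The ``grid cells'' remark at the end is exactly what is needed, but you explicitly abandoned it as ``not the cleaner route''; in fact it is the essential route.

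For the second inequality you attempt the \emph{pointwise} bound $\mathrm{Cap}(S\cap\Lambda(x_i,3r))\le 15^4\,\mathcal{C}(x_i)$ via ``greedy maximality gives $\mathcal{C}(w)\le\mathcal{C}(x_i)$'' for each covering centre $w$. This fails because $w$ need not have been available at step $i$: if $\|w-x_i\|\le 2r$ you only get $\|w-x_j\|>4r$ for $j<i$, not $>6r$, so $i(w)<i$ is possible, and then $\mathcal{C}(w)\le\mathcal{C}(x_{i(w)})$ with $\mathcal{C}(x_{i(w)})\ge\mathcal{C}(x_i)$ --- the wrong direction. A concrete obstruction: put a huge cluster at $x_1$, a single point at $x_2$ with $\|x_1-x_2\|=7r$, and a large cluster at $w$ on the segment between them with $\|w-x_2\|\approx 2.5r$; the greedy picks $x_1$ then $x_2$, and $\mathrm{Cap}(S\cap\Lambda(x_2,3r))/\mathcal{C}(x_2)$ is unbounded as $r\to\infty$. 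The \emph{summed} inequality is nonetheless true, but proving it requires swapping the order of summation: bound $\mathrm{Cap}(S\cap\Lambda(x_i,3r))\le\sum_k\mathcal{C}(w_k^{(i)})\le\sum_k\kappa_{i(w_k^{(i)})}$, sum over $i$, and then for each fixed $j$ count how many pairs $(i,k)$ can have $i(w_k^{(i)})=j$ --- this is $O(1)$ because the $x_i$ are $6r$-separated. The paper implements exactly this by restricting centres to the grid $(2r+1)\Z^4$ from the outset (so the ``neighbour'' sets $A[x]$ have cardinality $\le 3^4$ and $A^2[x]$ has $\le 5^4$), which makes both the first-inequality counting and the second-inequality double-sum immediate and yields the stated constants $3^{-4}$ and $15^4$.
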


We now have everything we need to complete the proof of Proposition~\ref{prop:volume_lower} given Lemma~\ref{lemma:disjoint_balls_capacity}.

\begin{proof}[Proof of Proposition~\ref{prop:volume_lower}]
Let $\alpha_0,r_0$ be the constants yielded by Proposition \ref{prop:lower_bound_expectation}, and fix $r\geq r_0\vee 2$, $\alpha>\alpha_0$. For the remainder of the proof we will abbreviate $M=M_\alpha$.
Let $K\geq 1$ and suppose that $\{x_i:1\leq i \leq K\}\subseteq\Z^4$ is a set of points such that the family of boxes $(\Lambda(x_i,3r))_{i=1}^K$ are mutually disjoint. We first show that the random variables $\abs{M(x_i,r)}$ are pairwise negatively correlated conditional on $\eta$ in the sense that
\[
\mathbb{E}^\eta \Bigl[\abs{M(x_i,r)} \cdot \abs{M(x_j,r)}\Bigr] \leq \mathbb{E}^\eta\Bigl[ \abs{M(x_i,r)}\Bigr] \mathbb{E}^\eta \Bigl[\abs{M(x_j,r)}\Bigr]
\]
for every $1\leq i < j \leq K$.
 Indeed, suppose that $u\in \Lambda(x_i,3r)$ and $v\in \Lambda(x_j,3r)$ for some $i\neq j$. We sample the UST conditional on $\eta=\Gamma(0,\infty)$ with Wilson's algorithm, beginning with a random walk $X$ started at $u$, followed by another walk $Y$ started at $v$. 
Let $\tau_1$ be the first time $X$ hits $\eta$, let $\tau_2$ be the first time $Y$ hits $\LE(X^{\tau_1})\cup\eta$, and 
 let $\tau_2^\prime$ be the first time $Y$ hits $\eta$. Then
\begin{multline*}
	\pr^\eta\bigl(u\in M(x_i,r),\, v \in M(x_j,r)\bigr)
	=\pr^\eta(u\in M(x_i,r))\\
  \cdot \pr^\eta\left(\LE(Y^{\tau_2})\subseteq \Lambda(x_j,3r),\, 
	\abs{\LE(Y^{\tau_2})}\leq \frac{r^2}{(\log r)^{1/3}},\, \LE(Y^{\tau_2}) \text{ is $(\alpha,r)$-good}\ \Big\vert\ u\in M(x_i,r)\right).
\end{multline*}
We have by the definition of $M(x_i,r)$ that if $u\in M(x_i,r)$ then $\LE(X^{\tau_1})\subseteq\Lambda(x_i,3r)$, so that if $\LE(Y^{\tau_2})\subseteq \Lambda(x_j,3r)$ then $\tau_2=\tau_2^\prime$. It follows that
\begin{align*}
	&\pr^\eta\bigl(v\in M(x_j,r)\mid u\in M(x_i,r)\big)\\
	&\hspace{2cm}\leq \pr^\eta\left(\LE(Y^{\tau_2^\prime})\subseteq \Lambda(x_j,3r),\, 
	\abs{\LE(Y^{\tau_2^\prime})}\leq \frac{r^2}{(\log r)^{1/3}},\, \LE(Y^{\tau_2^\prime}) \text{ is $(\alpha,r)$-good}\ \Big\vert\ u\in M(x_i,r)\right)\\
	&\hspace{2cm}=\pr^\eta\left(\LE(Y^{\tau_2^\prime})\subseteq \Lambda(x_j,3r),\, 
	\abs{\LE(Y^{\tau_2^\prime})}\leq \frac{r^2}{(\log r)^{1/3}},\, \LE(Y^{\tau_2^\prime}) \text{ is $(\alpha,r)$-good}\right)\\
	&\hspace{2cm}=\pr^\eta\big(v\in M(x_j,r)\big)
	\end{align*}
where the first equality follows because $Y^{\tau_2^\prime}$ is independent from the event $\{u\in M(x_i,r)\}$ conditional on $\eta$ and where the last equality follows by an application of Wilson's algorithm. The claimed negative correlation of $\abs{M(x_i,r)}$ and $\abs{M(x_j,r)}$ follows by summing over $u$ and $v$.
%
%
Negativity of the correlations immediately implies that
\[
\textrm{Var}^\eta\left(\Bigl|\bigcup_{i=1}^K M(x_i,r)\Bigr|\right)\leq \sum_{1\leq i\leq K} \textrm{Var}^\eta(\abs{M(x_i,r)}),
\]
and we deduce by Chebyshev together with Propositions~\ref{prop:lower_bound_expectation} and \ref{prop:upper_bound_variance} that
\begin{equation} \label{eq:Cheb}
	\pr^\eta\left(\Bigl|\bigcup_{i=1}^K M(x_i,r)\Bigr|\leq c_1r^{2}\sum_{i=1}^K \textrm{Cap}(\eta \cap \Lambda(x_i,r))\right)\preceq \frac{r^2}{\log r} \cdot \frac{\sum_{i=1}^K \mathrm{Cap}(\eta\cap \Lambda(x_i,3r))}{\left(\sum_{i=1}^K \textrm{Cap}\big(\eta \cap \Lambda(x_i,r)\big			)\right)^2}\
\end{equation}
for some constant $c_1>0$. Note that this estimate holds for any $K\geq 1$ and any collection of points $(x_i)_{i=1}^K$ in $\Z^4$ such that the family of boxes $(\Lambda(x_i,3r))_{i=1}^K$ are mutually disjoint, where we are free to choose $K$ and $(x_i)_{i=1}^K$ as functions of $\eta$ if we wish. (Of course the points we choose must be conditionally independent of the rest of the UST given $\eta$.)

We now want to apply this estimate to prove our lower tail estimate on $|\fB(n)|$. Fix $n\geq 1$, and for each $R\geq 1$, let $\mathscr{A}_R$ be the event that $\|\eta_i\|_\infty \geq 2R$ for every $i\geq n/2$. Observe from the definitions that if $\mathscr{A}_R$ holds and $r\geq 2$ is such that $r^2(\log r)^{-1/3} \leq n/2$ and $3r\leq R$ then 
\[
|\fB(n)| \geq \Bigl|\bigcup_{i=1}^K M(x_i,r)\Bigr|
\]
for any collection of points $x_1,\ldots,x_K$ in $\Lambda(R)$: the definition of the set $M(x_i,r)$ and the choice of $r$ ensures the path connecting $x$ to $\eta$ is contained in $\Lambda(2R)$ and has length at most $n/2$, while the definition of $\mathscr{A}_R$ ensures that this path meets $\eta$ within the first $n/2$ steps of $\eta$.
Thus, choosing these points as a function of $\eta$ and $r\geq 1$ as in the covering lemma, Lemma~\ref{lemma:disjoint_balls_capacity}, where we take $S = \eta \cap \Lambda(R)$, we deduce from \eqref{eq:Cheb} that there exists a constant $c_1$ such that
\begin{equation}
  \mathbbm{1}(\mathscr{A}_R)\pr^\eta\left(|\mathfrak{B}(n)|\leq c_1 r^{2}\textrm{Cap}(\eta \cap \Lambda(R))\right) \preceq \frac{r^2}{\log r} \cdot \frac{1}{\textrm{Cap}(\eta \cap \Lambda(R))}
\end{equation}
for every $r,R\geq 2$ such that $r^2(\log r)^{-1/3} \leq n/2$ and $3r\leq R$. As such, we have by a union bound that
\begin{equation}
\label{eq:optimize_three_parameters}
\pr\left(|\mathfrak{B}(n)|\leq \frac{c_1 r^{2} R^2}{\lambda \log R}\right) \preceq \frac{\lambda r^2 \log R}{R^2 (\log r)} + \P(\mathscr{A}_R^c) + \P\left(\textrm{Cap}(\eta \cap\Lambda(R)) \leq \frac{R^2}{\lambda \log R}\right)
\end{equation}
for every $r,R\geq 2$ such that $r^2(\log r)^{-1/3} \leq n/2$ and $3r\leq R$ and every $\lambda \geq 1$. 

To proceed, we will bound the second and third terms on the right hand side then optimize over the choice of $r$, $R$, and $\lambda$.
To bound $\P(\mathscr{A}_R)$, we use Wilson's algorithm to write
\begin{align*}
		\P(\mathscr{A}_R^c)&=\bP_0(\LE(X)_i \in \Lambda(2R) \text{ for some $i\geq n/2$}) 
		\\&\leq 
		\bP_0\left(\ell_{\lfloor n/2 \rfloor}(X) \leq \frac{1}{4}n(\log n)^{1/3} \right)
		+\bP_0\left(X_j \in \Lambda(2R) \text{ for some $j\geq \frac{1}{4}n(\log n)^{1/3}$}\right)\\
		&\preceq \frac{\log \log n}{(\log n)^{2/3}}+ \frac{R^2 }{n(\log n)^{1/3}},
\end{align*}
where the first term has been bounded using Theorem~\ref{theorem:LERWconc} and the second follows by a standard random walk computation (for example, it follows by \cite[Lemma 4.4]{MR4055195} and Markov's inequality). To bound the second term, 
we use the union bound
\[
\P\left(\textrm{Cap}(\eta \cap\Lambda(R)) \leq \frac{R^2}{\lambda \log R}\right) \leq \P(\|\eta_i\|\geq R \text{ for some $i\leq k$}) + \P\left(\textrm{Cap}(\eta^k) \leq \frac{R^2}{\lambda \log R}\right)
\]
for every $R,k\geq 1$ and $\lambda\geq 1$. Using Wilson's algorithm and a further union bound yields that
\begin{multline*}
\P\left(\textrm{Cap}(\eta \cap\Lambda(R)) \leq \frac{R^2}{\lambda \log R}\right) \leq 
\bP_0\left(\ell_k \geq 2 k (\log k)^{1/3} \right) +
\bP_0(\|X_j\|\geq R \text{ for some $j\leq 2 k (\log k)^{1/3}$})\\ + \bP_0\left(\textrm{Cap}(\LE(X)^k) \leq \frac{R^2}{\lambda \log R}\right),
\end{multline*}
and we deduce from Theorem~\ref{theorem:LERWconc}, the maximal version of Azuma-Hoeffding \cite[Section 2]{McDiarmid1998}, and Proposition~\ref{prop:cap_bound} that there exists a positive constant $C$ such that
\begin{equation*}
\P\left(\textrm{Cap}(\eta \cap\Lambda(R)) \leq \frac{R^2}{\lambda \log R}\right) \preceq \frac{\log \log k}{(\log k)^{2/3}} +
\exp\left[-\Omega\left(\frac{R^2}{k(\log k)^{1/3}}\right)\right]
\end{equation*}
for every $R,k\geq 1$ such that $k(\log k)^{-2/3} \leq C \lambda^{-1}  R^2 (\log R)^{-1}$. If $\lambda \leq R^{1/2}$ then the maximal such $k$ is of order $\lambda^{-1} R^2 (\log R)^{-1/3}$ and it follows by calculus that
\begin{equation*}
\P\left(\textrm{Cap}(\eta \cap\Lambda(R)) \leq \frac{R^2}{\lambda \log R}\right) \preceq \frac{\log \log R}{(\log R)^{2/3}} + \exp\left[-\Omega(\lambda^{-1})\right]
\end{equation*}
for every $R\geq 3$ and $1\leq \lambda \leq R^{1/2}$. Putting these estimates together yields that
\[
\pr\left(|\mathfrak{B}(n)|\leq \frac{c_1 r^{2} R^2}{\lambda \log R}\right) \leq \frac{\lambda r^2 \log R}{R^2\log r} + \frac{\log \log n}{(\log n)^{2/3}} + \frac{R^2}{n(\log n)^{1/3}} + \frac{\log \log R}{(\log R)^{2/3}} + \exp\left[-\Omega(\lambda^{-1})\right]
\]
for every $r,R\geq 2$ such that $r^2(\log r)^{-1/3} \leq n/2$ and $3r\leq R$ and every $1\leq \lambda \leq R^{1/2}$. Letting $\beta \geq 10$, taking $R=\lceil \beta^{-1} n^{1/2} (\log n)^{1/6}\rceil$, $r= \lceil \beta^{-2} n^{1/2} (\log n)^{1/6}\rceil$ and $\lambda =\beta$ yields that if $n \geq \beta^4$ then
\begin{align*}
\pr\left(|\mathfrak{B}(n)|\leq \frac{c_2 n^2}{\beta^5 (\log n)^{1/3}}\right) \preceq \beta^{-1} + \frac{\log \log n}{(\log n)^{2/3}} + \beta^{-2} + \frac{\log \log n}{(\log n)^{2/3}} + \exp\left[-\Omega(\beta^{-1})\right]
&\preceq \beta^{-1} + \frac{\log \log n}{(\log n)^{2/3}},
\end{align*}
which implies the claim.
\end{proof}

It remains to prove our covering lemma for the capacity, Lemma~\ref{lemma:disjoint_balls_capacity}. The proof, which exhibits and analyzes a greedy algorithm for constructing the desired set of balls, follows a standard strategy for proving covering lemmas of similar form.

\begin{proof}[Proof of Lemma \ref{lemma:disjoint_balls_capacity}]
Consider the set of centres $\mathcal{C}=\{x\in(2r+1)\Z^4:\mathrm{Cap}(\Lambda(x,r)\cap S )>0\}$ and the partition of $\Z^4$ defined by $\mathcal{B}=\{\Lambda(x,r):x\in \mathcal{C}\}$. Note that  $x\in S +\Lambda(r)$ for each $x\in\mathcal{C}$, since otherwise the box $\Lambda(x,r)$ would not contain any points of $ S $. Given $x\in\mathcal{C}$, we write $A[x]=\{y\in\mathcal{C}:\norm{y-x}_\infty\leq 2r+1\}$ for the set of centres in $\mathcal{C}$ equal to or adjacent to $x$. We note the crude bound $\#{A[x]}\leq 3^4$. Similarly, we write $A^2[x]=\{y\in\mathcal{C}:\norm{y-x}_\infty\leq 4r+2\}$, $A^3[x]=\{y\in\mathcal{C}:\norm{y-x}_\infty\leq 6r+3\}$, and note that $\#{A^2[x]}\leq 5^4$, $\#{A^3[x]}\leq 7^4$.

We will construct the sequence $(x_i)_{i=1}^K$ using a greedy algorithm. By subadditivity of capacity (which is an immediate consequence of the variational principle of Lemma \ref{lem:capacity_formulation}), we know that 
\begin{equation}\label{eq:subadd1}
	\Pi:=\sum_{x\in\mathcal{C}}\mathrm{Cap}( S \cap \Lambda(x,r))\geq \mathrm{Cap}( S ).
\end{equation}
Define the list of centres $(x_i)_{i\geq 0}\subseteq \mathcal{C}$ as follows. Let $\mathcal{C}_0=\mathcal{C}$, and for $i\geq 0$ such that $\mathcal{C}_i\neq\emptyset$, let
\[
x_i=\argmax\{\mathrm{Cap}(\Lambda(x,r)\cap S ):x\in\mathcal{C}_i\};\qquad \mathcal{C}_{i+1} = \mathcal{C}_i\setminus A^2[x_i],
\]
Write $I=\inf\{i\geq 0: \mathcal{C}_i=\emptyset\}$ and define $\kappa_i=\mathrm{Cap}(\Lambda(x_i,r)\cap S )$ for $0\leq i< I$.  We claim that 
\begin{equation}
\sum_{0\leq i\leq n}\mathrm{Cap}( S \cap \Lambda(x_i,3r))\leq 21^4 \sum_{0\leq j\leq n} \kappa_j \qquad \text{for every $n<I$.}
\label{eq:covering_claim}
\end{equation}
Fix $0\leq i<I$. We note that for any $y\in A^2[x_i]$, there exists a unique $0\leq j\leq i$ such that $y\in \mathcal{C}_{j}\setminus \mathcal{C}_{j+1}$. By definition of $\kappa_j$ and $x_j$, it must then hold that $\mathrm{Cap}(\Lambda(y,r)\cap S )\leq\kappa_j$. By subadditivity of capacity, we can therefore write
\[
\mathrm{Cap}(\Lambda(x_i,3r)\cap S )\leq \sum_{y\in A[x_i]}\mathrm{Cap}(\Lambda(y,r)\cap S  )\leq  \sum_{y\in A[x_i]}\sum_{j\leq i}\kappa_{j}\mathbbm{1}(y\in\mathcal{C}_j\setminus\mathcal{C}_{j+1}).
\]
Observing that $\mathcal{C}_j\setminus\mathcal{C}_{j+1}\subseteq A^2[x_j]$ for $j<I$, we get
\[
\mathrm{Cap}(\Lambda(x_i,3r)\cap S )\leq  \sum_{j\leq i}\kappa_{j}\abs{A[x_i]\cap A^2[x_j]}.
\]
By switching the order of summation, we have
\[
\sum_{0\leq i\leq n}\mathrm{Cap}( \Lambda(x_i,3r)\cap S )\leq  \sum_{0\leq j\leq n} \kappa_{j}\sum_{j\leq i\leq n}\abs{A[x_i]\cap A^2[x_j]}.
\]
Finally, $\abs{A[x_i]\cap A^2[x_j]}\leq\abs{A[x_i]}\leq 3^4$, and if $\abs{A[x_i]\cap A^2[x_j]}\neq 0$, then $x_i\in A^3[x_j]$. The $x_i$ are all distinct, and there are at most $7^4$ elements in $A^3[x_j]$, and so the summations over $i$ on the right hand side are bounded above by $3^4\times 7^4=21^4$, thus proving the claim \eqref{eq:covering_claim}.

Next, observe that for $i\geq 0$ 
\[
\sum_{x\in \mathcal{C}_i}\mathrm{Cap} (\Lambda(x,r)\cap S ) \geq \Pi-5^4 \sum_{0\leq j\leq i-1}\kappa_j.
\]
Indeed, at stage $i$ in the algorithm we remove at most $5^4$ centres from $\mathcal{C}_i$ to give $\mathcal{C}_{i+1}$, and for each of these centres $x$, we must have $\mathrm{Cap}( S \cap \Lambda(x,r))\leq \kappa_i$. Putting $i=I$ in the above equation gives
\[5^4 \sum_{0\leq j<I}\kappa_j\geq \Pi,\]
and so by \eqref{eq:subadd1}, we have $\sum_{0\leq j<I}\kappa_j\geq 5^{-4} \mathrm{Cap}( S )$. Finally, we note that for $0\leq i<j<I$, by construction $x_j\notin A^2[x_i]$, and so the balls $\Lambda(x_i,3r)$ and $\Lambda(x_j,3r)$ are disjoint.
\end{proof}

\begin{remark}
Note that the proof of Lemma~\ref{lemma:disjoint_balls_capacity} does not use any properties of the capacity other than subadditivity and non-negativity, so that a similar covering lemma holds for any subadditive, non-negative set function.
\end{remark}

\section{Random walk}

We now apply our main geometric theorem, Theorem~\ref{thm:main_geometric_theorem}, to study the behaviour of the random walk on the 4d UST. We begin by applying our results together with those of \cite{hutchcroft2020logarithmic} to prove our effective resistance estimate, Theorem~\ref{cor:eff_cond}, in Section~\ref{subsec:resistance}.
	In Section~\ref{subsec:Markov_type} we review the theory of Markov-type inequalities and prove our upper bound on the mean-squared displacement, Theorem~\ref{thm:displacement}. Finally, in Section~\ref{subsec:Kumagai_Misumi} we show how the remaining estimates of Theorem~\ref{thm:est_collect} can be deduced from these estimates using the methods of \cite{BJKS08,kumagai2008heat}.

\subsection{Effective Resistance}
\label{subsec:resistance}
In this section we prove Theorem~\ref{cor:eff_cond}. The upper bound is trivial since resistances are always bounded by distances, so we focus on the lower bound.
We will employ \cite[Lemma 8.3]{MR4055195} which we reproduce here. Let $\mathscr{C}_{\mathrm{eff}}(A\leftrightarrow B;G)=\mathscr{R}_{\mathrm{eff}}(A\leftrightarrow B;G)^{-1}$ denote the effective conductance between sets $A,B\subseteq V[G]$.
\begin{lemma}[\!\!\cite{MR4055195}, Lemma 8.3] \label{lemma:eff_res}
	Let $T$ be a tree, let $v$ be a vertex of $T$, and let $N_v(n,k)$ be the number of vertices $u\in\partial B(v,k):=B(v,k)\setminus B(v,k-1)$ at distance $k$ from $v$  such that $u$ lies on a geodesic in $T$ from $v$ to $\partial B(v,n)$. Then 
	\[
	\mathscr{C}_{\mathrm{eff}}(v\leftrightarrow \partial B(v,n);T)\leq \frac{1}{k} N_v(n,k)
	\]
	for every $1\leq k\leq n$.
\end{lemma}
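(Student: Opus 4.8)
The plan is to bound the effective conductance $\mathscr{C}_{\mathrm{eff}}(v\leftrightarrow \partial B(v,n);T)$ directly, via the Dirichlet principle, by exhibiting a test function whose energy is at most $\tfrac1k N_v(n,k)$. Since a random walk started at $v$ must pass through $\partial B(v,n)$ before leaving $B(v,n)$, we may harmlessly replace $T$ by the finite subtree induced on $B(v,n)$ without changing $\mathscr{R}_{\mathrm{eff}}(v\leftrightarrow \partial B(v,n))$, and we root this tree at $v$; on a finite network the Dirichlet principle
\[
\mathscr{C}_{\mathrm{eff}}(v\leftrightarrow \partial B(v,n);T) = \min\Bigl\{\sum_{\{x,y\}\in E}\bigl(f(x)-f(y)\bigr)^2 \;:\; f(v)=1,\ f|_{\partial B(v,n)}=0\Bigr\}
\]
holds with no further qualification (see e.g.\ \cite{LP:book}).

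For a vertex $u$, let $u^*$ denote the last vertex on the geodesic from $v$ to $u$ that itself lies on some geodesic from $v$ to $\partial B(v,n)$; such a vertex always exists since $v$ does. Set $f(u) = \bigl(1 - d(v,u^*)/k\bigr)^+$. Then $f(v)=1$ and $f$ vanishes on $\partial B(v,n)$ (those vertices lie on geodesics to $\partial B(v,n)$ and are at distance $n\geq k$ from $v$), so $f$ is an admissible test function. The crucial observation --- and the reason for defining $f$ via $u^*$ rather than directly in terms of $d(v,u)$ --- is that $f$ is constant across any edge $\{x,y\}$ whose endpoint farther from $v$, say $y$, does not lie on a geodesic to $\partial B(v,n)$: in that case $y^*=x^*$, so the edge contributes nothing. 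Hence only edges both of whose endpoints lie on geodesics to $\partial B(v,n)$ contribute, and such an edge joining a vertex at distance $j$ from $v$ to one at distance $j+1$ contributes exactly $k^{-2}$ when $j+1\leq k$ and $0$ otherwise. Indexing each contributing edge by its endpoint farther from $v$, which is a vertex on a geodesic to $\partial B(v,n)$ at some distance $j\in\{1,\dots,k\}$, we obtain that the energy of $f$ equals $k^{-2}\sum_{j=1}^{k} N_v(n,j)$.

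It then remains to bound $\sum_{j=1}^k N_v(n,j)\leq k\,N_v(n,k)$, which follows from the monotonicity $N_v(n,1)\leq \cdots \leq N_v(n,k)$: for $j<n$ the parent map carries the set of distance-$(j+1)$ vertices on geodesics to $\partial B(v,n)$ onto the analogous set at distance $j$, since every distance-$j$ vertex on such a geodesic (being a strict ancestor of a distance-$n$ vertex) has a child with the same property. Combining the two bounds gives that the energy of $f$ is at most $N_v(n,k)/k$, and the Dirichlet principle then yields $\mathscr{C}_{\mathrm{eff}}(v\leftrightarrow \partial B(v,n);T)\leq N_v(n,k)/k$, as required. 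I do not expect any real difficulty here: the only step needing a moment's thought is verifying that $f$ is constant on the subtrees hanging off the geodesic skeleton, which is precisely what the definition of $u^*$ guarantees --- a naively chosen potential decreasing linearly in $d(v,u)$ would have uncontrolled energy when $v$ has many short branches not reaching $\partial B(v,n)$.
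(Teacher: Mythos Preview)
Your proof is correct. Note, however, that the paper does not actually prove this lemma: it is quoted from \cite{MR4055195} without proof, so there is no ``paper's own proof'' to compare against. Your Dirichlet-principle argument is a clean and self-contained way to establish the bound; an alternative (and perhaps slightly shorter) route is to first observe that edges not lying on any geodesic from $v$ to $\partial B(v,n)$ carry no current and may be deleted without changing the conductance, and then apply the Nash--Williams inequality to the edge-cutsets at levels $1,\dots,k$ in the pruned tree, giving $\mathscr{R}_{\mathrm{eff}}(v\leftrightarrow\partial B(v,n);T)\geq\sum_{j=1}^k N_v(n,j)^{-1}\geq k/N_v(n,k)$ via the same monotonicity $N_v(n,1)\leq\cdots\leq N_v(n,k)$ you use. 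Both arguments rely on the same structural observations (irrelevance of dead branches, monotonicity of $N_v(n,\cdot)$), so the difference is cosmetic.
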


We will also use the following theorem of \cite{hutchcroft2020logarithmic} concerning the tail of the intrinsic radius of the past. For each $n\geq 0$, let $\partial\mathfrak{P}(0,n)$ be the set of vertices in $\mathfrak{P}(0)$ with an intrinsic distance from $0$ of exactly $n$.
\begin{theorem}[\!\!\cite{hutchcroft2020logarithmic}, Theorem 1.1] \label{theorem:past}
	Let $\fT$ be the uniform spanning tree of $\Z^4$. Then
	\[
	\pr(\partial\mathfrak{P}(0,n)\neq\emptyset)\asymp \frac{(\log n)^{1/3}}{n}
	\]
	for every $n\geq 1$.
\end{theorem}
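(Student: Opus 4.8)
The plan is to read the event $\{\mathfrak{P}(0,n)\neq\emptyset\}$ as the event that the past of the origin reaches intrinsic distance at least $n$, equivalently that the random variable $Z_n:=|\mathfrak{P}(0)\cap\partial\mathfrak{B}(0,n)|$ counting past vertices at intrinsic distance exactly $n$ from $0$ is at least $1$, and to control $Z_n$ through its first two moments. The key preliminary observation is a mass-transport identity. Apply the mass-transport principle for $\Z^4$ to the diagonally-invariant function $f(x,y)=\mathbb{P}(y \text{ is the } n\text{-th vertex of } \Gamma(x,\infty))$: the left sum $\sum_{y}f(0,y)$ equals $1$ because the future of $0$ has exactly one $n$-th vertex, while the right sum $\sum_{x}f(x,0)$ equals $\mathbb{E}|\{x:\Gamma(x,\infty)_n=0\}|=\mathbb{E}|\mathfrak{P}(0)\cap\partial\mathfrak{B}(0,n)|$, since $\Gamma(x,\infty)$ passes through $0$ at its $n$-th step precisely when $x\in\mathfrak{P}(0)$ and $d_\fT(x,0)=n$. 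Hence $\mathbb{E}Z_n=1$ for every $n\geq1$, and summing over levels gives $\mathbb{E}|\mathfrak{P}(0)\cap\mathfrak{B}(0,n)|=n+1$. The theorem then amounts to the two-sided bound $\mathbb{P}(Z_n\geq1)\asymp (\log n)^{1/3}/n$.

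For the lower bound, Cauchy--Schwarz gives $\mathbb{P}(Z_n\geq1)\geq (\mathbb{E}Z_n)^2/\mathbb{E}[Z_n^2]=1/\mathbb{E}[Z_n^2]$, so it suffices to show $\mathbb{E}[Z_n^2]\preceq n/(\log n)^{1/3}$. Expand $\mathbb{E}[Z_n^2]=\sum_{x,x'}\mathbb{P}(x,x'\in\mathfrak{P}(0)\cap\partial\mathfrak{B}(0,n))$ and split each pair according to its branch point $u=x\vee x'$, which lies in $\mathfrak{P}(0)\cap\partial\mathfrak{B}(0,j)$ for some $0\leq j\leq n$. Generating $\Gamma(0,\infty)$ and then the geodesics to $x$ and $x'$ by Wilson's algorithm, the diagonal term $x=x'$ contributes $\mathbb{E}Z_n=1$, and the off-diagonal sum reduces, for each $j$, to the expected number of pairs of disjoint loop-erased walks of intrinsic length $n-j$ emanating from a level-$j$ past vertex. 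These are controlled using the loop-erased walk length concentration of Theorem~\ref{theorem:LERWconc}, the elementary bound $n\preceq\ell_n$, and the $\asymp 1/\log r$ intersection estimate for two independent walks inside a box of radius $r$; the $(\log n)^{-1/3}$ in the target comes from $\rho_m\asymp m/(\log m)^{1/3}$, which fixes the relevant extrinsic scale $r\asymp n^{1/2}(\log n)^{1/6}$ of a typical depth-$n$ past vertex.

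For the upper bound, the first-moment inequality only yields $\mathbb{P}(Z_n\geq1)\leq\mathbb{E}Z_n=1$, so instead the plan is to show that, conditionally on $\{\mathfrak{P}(0,n)\neq\emptyset\}$, the past contains $\succeq n^2/(\log n)^{1/3}$ vertices within intrinsic distance $n$ with conditional probability bounded away from $0$; combined with $\mathbb{E}|\mathfrak{P}(0)\cap\mathfrak{B}(0,n)|=n+1$ this forces $\mathbb{P}(\mathfrak{P}(0,n)\neq\emptyset)\preceq (n+1)(\log n)^{1/3}/n^2$. To prove this conditional volume lower bound, condition on a geodesic branch $\gamma=(0=v_0,v_1,\dots,v_n)$ of the past of intrinsic length $n$ (a loop-erased random walk, reversible to a LERW run from $0$), reveal the future of $0$, and run a second-moment argument --- patterned on the lower-bound proof of Theorem~\ref{thm:main_geometric_theorem} in Section~\ref{subsec:volume_lower}, with conditional mean and variance built from capacity estimates via Proposition~\ref{prop:cap_bound} --- for the total volume of the subtrees of the past hanging off the central segment $\{v_i:n/4\leq i\leq 3n/4\}$ of $\gamma$ and lying within intrinsic distance $n$ of $0$.

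The main obstacle is exactly this conditional volume estimate: conditioning on the existence of a long branch distorts the law of both the branch and the subtrees hanging off it, so the computation of Section~\ref{subsec:volume_lower} cannot be quoted verbatim. The remedy is to use cut times of the underlying random walk (Lemma~\ref{lem:Lawlercuttimes}) together with the fact that loop-erasure decomposes at cut times, much as in the analysis of the term $C_r$ in the proof of Proposition~\ref{prop:lower_bound_expectation}, in order to decouple the subtrees along the branch and reduce to an essentially unconditioned second moment --- all while tracking the $(\log n)^{\pm 1/3}$ factors carefully enough to obtain the sharp exponent. Keeping the off-diagonal second-moment sum in the lower-bound step under control is a further, more routine, technical burden.
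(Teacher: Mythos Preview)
This theorem is not proven in the present paper: it is quoted verbatim as Theorem~1.1 of \cite{hutchcroft2020logarithmic} and used as a black-box input (in the proofs of Theorem~\ref{cor:eff_cond} and Theorem~\ref{thm:displacement}). There is therefore no proof in this paper to compare your proposal against.

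That said, your outline is broadly aligned with the strategy actually carried out in \cite{hutchcroft2020logarithmic}. The mass-transport identity $\mathbb{E}Z_n=1$ is correct and is indeed the starting point there, and the upper bound is obtained by exactly the mechanism you describe: a conditional second-moment volume lower bound for the past combined with the first-moment identity $\mathbb{E}|\mathfrak{P}(0)\cap\mathfrak{B}(0,n)|=n+1$. You have also correctly identified the main technical obstacle, namely that conditioning on $\{\mathfrak{P}(0,n)\neq\emptyset\}$ changes the law in a way that prevents one from simply quoting the unconditional volume estimates; handling this is the bulk of the work in \cite{hutchcroft2020logarithmic}, which introduces the $0$-wired forest and stochastic domination (Lemma~\ref{lemma:stoch_dom} here) and develops the typical-time and capacity machinery precisely to control these conditioned objects. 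Your sketch of ``use cut times to decouple and reduce to an unconditioned second moment'' substantially understates what is required, and your treatment of the off-diagonal second-moment term in the lower bound is too vague to assess. If you wish to supply a self-contained proof rather than cite the result, you should expect an argument of comparable length and difficulty to \cite{hutchcroft2020logarithmic} itself.
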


We now apply these results together with Theorem~\ref{thm:main_geometric_theorem} to prove Theorem~\ref{cor:eff_cond}.

\begin{proof}[Proof of Theorem~\ref{cor:eff_cond}]
	Fix $\lambda>0$ and $\delta\in(0,1]$.
	For each $0\leq m\leq n$, let $K(n,m)$ be the set of vertices $u\in\partial \mathfrak{B}(0,m)$ that lie on a geodesic from $0$ to $\partial \mathfrak{B}(0,n)$ and let $K^\prime (n,m)$ be the set of vertices $u\in\partial \mathfrak{B}(0,m)$ such that $\partial\fP(u,n-m)\neq\emptyset$. We observe that $K(n,m)\setminus K^\prime(n,m)$ contains at most one vertex, namely the unique vertex in $\partial \fB(0,m)$ which lies in the future of $0$, and so, by Lemma \ref{lemma:eff_res}, we have 
	\[
	\mathscr{C}_{\mathrm{eff}}(0\leftrightarrow \partial \mathfrak{B}(0,n);\fT)\leq \frac{1}{m}\abs{K(n,m)}\leq\frac{1}{m}+\frac{1}{m}\abs{K^\prime(n,m)}
	\]
	for each $1\leq m\leq n$. Averaging this gives us that
	\[
	\mathscr{C}_{\mathrm{eff}}(0\leftrightarrow \partial \mathfrak{B}(0,3n);\fT)\preceq \frac{1}{n}+ \frac{1}{n^2}\sum_{m=n}^{2n}\abs{K^\prime(3n,m)},
	\]
	for each $n\geq1$.	Now, for each $n\geq 1$, the sets $(K^\prime(n,m))_{n\leq m\leq 2n}$ are pairwise disjoint and their union satisfies
	\[
	\bigcup_{n\leq m\leq 2n} K^\prime(n,m)\subseteq \{u\in\Z^4: u\in \mathfrak{B}(0,2n),\, \partial\fP(u,n)\neq\emptyset\},
	\]
	and so 
	\[
	\mathscr{C}_{\mathrm{eff}}(0\leftrightarrow \partial \mathfrak{B}(0,3n);\fT)\preceq \frac{1}{n}+ \frac{1}{n^2}\sum_{u\in\Z^4} \mathbbm{1}\big(u\in \mathfrak{B}(0,2n),\, \partial\fP(u,n)\neq\emptyset\big).
	\]
	Multiplying both sides by the indicator function $\mathbbm{1}(\abs{\mathfrak{B}(0,4n)}\leq \lambda^{1/2}n^2(\log n)^{-1/3+\delta})$ and taking expectations gives
	\begin{align*}
		\nonumber&\E{\mathscr{C}_{\mathrm{eff}}(0\leftrightarrow \partial \mathfrak{B}(0,3n);\fT)\mathbbm{1}\left(\abs{\mathfrak{B}(0,4n)}\leq \frac{\lambda^{1/2}n^2}{(\log n)^{1/3-\delta} }\right)}\\
		\nonumber	&\hspace{5cm} \preceq \frac{1}{n}+ \frac{1}{n^2}\sum_{u\in\Z^d} \pr\left(u\in \mathfrak{B}(0,2n),\, \partial\mathfrak{P}(u,n)\neq\emptyset,\, \abs{\mathfrak{B}(0,4n)}\leq \frac{\lambda^{1/2}n^2}{(\log n)^{1/3-\delta} }\right),
	\end{align*}
	and applying the mass-transport principle to exchange the roles of $0$ and $u$ yields that
	\begin{align}
		\label{eq:cond_exp_ceff}
		\nonumber&\E{\mathscr{C}_{\mathrm{eff}}(0\leftrightarrow \partial \mathfrak{B}(0,3n);\fT)\mathbbm{1}\left(\abs{\mathfrak{B}(0,4n)}\leq \frac{\lambda^{1/2}n^2}{(\log n)^{1/3-\delta} }\right)}\\
		&\hspace{5cm}\preceq \frac{1}{n}+ \frac{1}{n^2}\sum_{u\in\Z^d} \pr\left(0\in \mathfrak{B}(u,2n),\, \partial\mathfrak{P}(0,n)\neq\emptyset,\, \abs{\mathfrak{B}(u,4n)}\leq \frac{\lambda^{1/2}n^2}{(\log n)^{1/3-\delta} }\right)\nonumber\\
		\nonumber	&\hspace{5cm}\leq \frac{1}{n}+ \frac{1}{n^2}\E{\abs{\mathfrak{B}(0,2n)}\mathbbm{1}\left(\abs{\mathfrak{B}(0,2n)}\leq \frac{\lambda^{1/2}n^2}{(\log n)^{1/3-\delta} },\, \partial\mathfrak{P}(0,n)\neq\emptyset\right)}\\
		&\hspace{5cm}\preceq \frac{1}{n}+ \frac{\lambda^{1/2}}{(\log n)^{1/3-\delta}}\pr\big(\partial\mathfrak{P}(0,n)\neq\emptyset\big) \preceq \lambda^{1/2}\frac{(\log n)^\delta}{n},
	\end{align}
	where the final inequality follows from Theorem \ref{theorem:past}.
	Now by a union bound, we have
	\begin{multline*}
		\pr\left(\mathscr{C}_{\mathrm{eff}}(0\leftrightarrow \partial \mathfrak{B}(0,3n);\fT)>\lambda \frac{(\log n)^\delta}{n}\right)\leq \pr\left( \abs{\mathfrak{B}(0,4n)}> \frac{\lambda^{1/2}n^2}{(\log n)^{1/3-\delta} }  \right)\\ +\pr\left(\mathscr{C}_{\mathrm{eff}}(0\leftrightarrow \partial \mathfrak{B}(0,3n);\fT)\mathbbm{1}\left(\abs{\mathfrak{B}(0,4n)}\leq \frac{\lambda^{1/2}n^2}{(\log n)^{1/3-\delta} }\right)>\lambda \frac{(\log n)^\delta}{n}\right).
	\end{multline*}
	Applying Markov's inequality to each term on the right hand side and using \eqref{eq:cond_exp_ceff} and Theorem \ref{thm:main_geometric_theorem} to estimate the relevant expectations yields that
	\begin{equation}
		\pr\left(\mathscr{C}_{\mathrm{eff}}(0\leftrightarrow \partial \mathfrak{B}(0,3n);\fT)>\lambda \frac{(\log n)^\delta}{n}\right)\preceq_\delta \lambda^{-1}\lambda^{1/2}+\lambda^{-1/2}\preceq \lambda^{-1/2},
	\end{equation}
	and the claim follows since $\lambda,\delta>0$ were arbitrary.
\end{proof}

\subsection{Upper bounds on displacement via Markov-type inequalities}
\label{subsec:Markov_type}

In this section, we will use Markov-type inequalities \cite{MR3077911,MR1159828,MR2239346} together with the results of \cite{hutchcroft2020logarithmic}
to prove Theorem~\ref{thm:displacement}, which establishes sharp upper bounds on the expectation of the squared maximal intrinsic displacement of a random walk on the 4d UST. Markov-type inequalities were first introduced by Ball \cite{MR1159828} in the context of the Lipschitz extension problem, and have since been found to have many important applications to the study of random walk \cite{MR4146545,MR4369717,lee2020relations,MR4348678,MR3383336}. Our work is particularly influenced by that of James Lee and his coauthors \cite{MR4348678,lee2020relations,MR4369717,ding2013markov}, who pioneered the use of Markov-type inequalities to prove sharp subdiffusive estimates for random walks on fractals. We begin by quickly reviewing the general theory, including in particular the extension of the universal Markov-type inequality for planar graphs of Ding, Lee, and Peres \cite{ding2013markov} to \emph{unimodular hyperfinite} planar graphs established in \cite{MR4146545}.

\medskip

\noindent \textbf{Unimodular weighted graphs.}  A \textbf{vertex-weighted graph} is a pair $(G,\omega)$ consisting of a graph $G$ and a \textbf{weighting} on $G$, that is a function $\omega: V[G]\rightarrow[0,\infty)$. We define the \textbf{weighted graph distance} between vertices $x,y$ of a weighted graph $(G,\omega)$ by 
\[
d_\omega^G(x,y)=\inf_{x=u_0\sim \cdots\sim u_n=y, n\in\N}\sum_{i=1}^n \frac{1}{2}\big(\omega(u_i)+\omega(u_{i-1})\big).
\]
Let $\mathcal{G}_\bullet^\omega$ be the space of triples $(G,\omega,\rho)$, where $(G,\omega)$ is a locally finite vertex-weighted graph, and $\rho\in V[G]$ is a vertex known as the \textbf{root} vertex. The space $\mathcal{G}_\bullet^\omega$ is equipped with the Borel sigma algebra induced by the natural generalisation of the Benjamini-Schramm local topology \cite{Curien,AL07}
	in which two rooted, weighted graphs are considered to be close if there exist large graph-distance balls around their roots for which their respective balls admit a graph isomorphism that approximately preserves the weights. The details of this construction are not important to us and can be found in e.g.\ \cite[Section 1.2]{Curien}. Similarly, we also have the space $\mathcal{G}_{\bullet\bullet}^\omega$ of vertex-weighted graphs with an \emph{ordered pair} of distinguished vertices.
We say that a random variable $(G,\omega,\rho)$ taking values in $\mathcal{G}_\bullet^\omega$ is a \textbf{unimodular vertex-weighted graph} if it satisfies the \textbf{mass-transport principle}, i.e.\ if
\[
\E{\sum_{v\in V[G]}F(G,\omega,\rho,v)}=\E{\sum_{v\in V[G]}F(G,\omega,v,\rho)}
\]
for each Borel measurable function $F:\mathcal{G}_{\bullet\bullet}^\omega\rightarrow[0,\infty)$.
Unweighted unimodular random graphs are defined similarly; we refer the reader to \cite{AL07,Curien} for a more in-depth discussion of the local topology and unimodularity. These notions are relevant to our setting since if $K$ is the component of the origin $0$ in some translation-invariant random subgraph of $\Z^d$ then $(K,0)$ always defines a unimodular random rooted graph, so that, in particular, $(\fT,0)$ is a unimodular random rooted graph when $\fT$ is the UST of $\Z^4$. Moreover, if the weight $\omega:\Z^4\to [0,\infty)$ is computed from $\fT$ in a translation-equivariant way then the resulting weighted random rooted graph $(\fT,\omega,0)$ is also unimodular, as can be seen by applying the usual mass-transport principle on $\Z^4$ to the expectations $\mathbb{E} F(\fT,\omega,x,y)$.

\medskip

\noindent \textbf{Markov-type inequalities.}
A metric space $\mathcal{X}=(\mathcal{X},d)$ is said to have \textbf{Markov-type} $2$ with constant $c<\infty$ if 
for every finite set $S$, every irreducible reversible Markov chain $M$ on $S$, and every function $f:S\rightarrow \mathcal{X}$ the inequality
\[
\mathbb{E}\left[d\big(f(Y_0),f(Y_n)\big)^2\right]\leq c^2n \mathbb{E}\left[d\big(f(Y_0),f(Y_1)\big)^2\right]
\] 
holds for every $n\geq 0$,
where $(Y_i)_{i\geq 0}$ is a trajectory of the Markov chain $M$ with $Y_0$ distributed as the stationary measure of $M$. Similarly, a metric space $\mathcal{X}=(\mathcal{X},d)$ is said to have \textbf{maximal Markov-type} $2$ with constant $c<\infty$ if for every finite set $S$ and every irreducible reversible Markov chain $M$ on $S$, and every function $f:S\rightarrow \mathcal{X}$, we have that 
\[
\mathbb{E}\left[\max_{0\leq i\leq n}d\big(f(Y_0),f(Y_i)\big)^2\right]\leq c^2n \mathbb{E}\left[d\big(f(Y_0),f(Y_1)\big)^2\right]
\] 
for each $n\geq 0$,
where, as before, $(Y_i)_{i\geq 0}$ is a trajectory of the Markov chain $M$ with $Y_0$ distributed as the stationary measure of $M$. 

\medskip

It is proved in \cite{ding2013markov} that there exists a universal constant $C$ such that every vertex-weighted planar graph has Markov-type 2 with constant $C$; in fact their proof also establishes the existence of a universal constant $C$ such that every weighted planar graph has \emph{maximal} Markov-type 2 with constant $C$ as explained in \cite[Proposition 2.4]{MR4146545}.  This fact is significantly easier for \emph{trees}, where it was established by Naor, Peres, Schramm, and Sheffield \cite{MR2239346} (see also \cite[Theorem 13.14]{LP:book}).

\medskip

We now describe the consequences of this theorem for unimodular random planar graphs. We must first define what it means for a unimodular random rooted graph to be \emph{hyperfinite}.
A \textbf{percolation} on a unimodular random rooted graph $(G,\rho)$ is a labelling $\eta$ of the edge set of $G$ by the elements ${0,1}$ such that the resultant edge-labelled graph $(G,\eta,\rho)$ is unimodular. We think of the percolation $\eta$ as a random subgraph of $G$, where each edge is labelled $1$ if it is included in the subgraph and $0$ otherwise, and denote the connected component of $\rho$ in this subgraph as $K_\eta(\rho)$. We say a percolation is finitary if $K_\eta(\rho)$ is almost surely finite, and say a unimodular random rooted graph $(G,\rho)$ is  \textbf{hyperfinite} if there exists an increasing sequence of finitary percolations $(\eta_n)_{n\geq 1}$ such that $\cup_{n\geq 1} K_{\eta_n}(\rho)=V[G]$ almost surely. The component of the origin in a translation-invariant random subgraph of $\Z^d$ is always hyperfinite as can be seen by taking a random hierarchical partition of $\Z^d$ into dyadic boxes. The following proposition appears as \cite[Corollary 2.5]{MR4146545}.

\begin{proposition} \label{prop:hyper_markov}
	Let $(G,\rho)$ be a hyperfinite, unimodular random rooted graph with $\E{\deg(\rho)}<\infty$ that is almost surely planar, and suppose that $\omega$ is a vertex-weighting of $G$ such that $(G,\omega,\rho)$ is a unimodular vertex-weighted graph. If $Y$ is a random walk on $G$ started at $\rho$ then
	\[
	\mathbb{E}\left[\deg(\rho)\max_{0\leq i\leq n}d_\omega^G\big(Y_0,Y_i\big)^2\right]\leq C^2 n \mathbb{E}\left[\deg(\rho)\omega(\rho)^2\right],
	\] 
	for each $n\geq 1$, where $C$ is a universal constant.
\end{proposition}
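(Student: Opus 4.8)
The plan is to deduce the inequality from the finite planar case by means of a hyperfinite exhaustion and the mass-transport principle, following \cite{ding2013markov,MR4146545}. We may assume $\mathbb{E}[\deg(\rho)\omega(\rho)^2]<\infty$, as otherwise there is nothing to prove. It will be convenient to pass to the degree-biased measure $\widehat{\mathbb{P}}$ defined by $d\widehat{\mathbb{P}}=\deg(\rho)\,d\mathbb{P}/\mathbb{E}[\deg(\rho)]$, so that the claim becomes $\widehat{\mathbb{E}}\big[\max_{0\le i\le n}d_\omega^G(Y_0,Y_i)^2\big]\le C^2 n\,\widehat{\mathbb{E}}[\omega(\rho)^2]$. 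The input from \cite{ding2013markov}, in the maximal form recorded as \cite[Proposition 2.4]{MR4146545}, is that there is a universal constant $C$ such that every finite vertex-weighted planar graph has maximal Markov-type $2$ with constant $C$; applied to a reversible Markov chain, the inequality may be stated with respect to any (not necessarily normalised) reversible measure by multiplying through.

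Next I would introduce the exhaustion. Fix an increasing sequence of finitary percolations $(\eta_k)_{k\ge 1}$ with $\bigcup_k K_{\eta_k}(\rho)=V[G]$ almost surely, which we may take to be jointly unimodular with $\omega$. For each $k$ let $H_k$ be the subgraph of $G$ induced by the finite set $K_{\eta_k}(\rho)$, decorated at each vertex $v$ by $\deg_G(v)-\deg_{H_k}(v)$ self-loops so that every vertex retains its $G$-degree; then $H_k$ is finite and planar, and the ``reflected'' walk $Z^{(k)}$ on $H_k$ — which at each step picks a uniform $G$-neighbour and moves there if it lies in $K_{\eta_k}(\rho)$ and otherwise stays put — is reversible with respect to $v\mapsto\deg_G(v)$. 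Coupling all the $Z^{(k)}$ and the true walk $Y$ to a common sequence of uniformly chosen neighbours, we have $Z^{(k)}_i=Y_i$ for all $i\le n$ on the event $\{\sigma_k>n\}$, where $\sigma_k$ is the first exit of $Y$ from $K_{\eta_k}(\rho)$. Since $\{Y_0,\dots,Y_n\}$ is finite and the $K_{\eta_k}(\rho)$ increase to $V[G]$, we have $\mathbbm{1}(\sigma_k>n)\uparrow 1$ almost surely, so by monotone convergence it suffices to prove, with a constant not depending on $k$, that $\widehat{\mathbb{E}}\big[\mathbb{E}^{H_k}_\rho[\max_{0\le i\le n}d_\omega^G(Z_0,Z_i)^2]\big]\le C^2 n\,\widehat{\mathbb{E}}[\omega(\rho)^2]$.

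The crux is transferring the finite maximal Markov-type inequality, which concerns the $H_k$-walk started from its stationary probability measure $\pi_{H_k}=\deg_G/\mathrm{vol}(K_{\eta_k}(\rho))$, to the statement about the walk started at the root under $\widehat{\mathbb{P}}$. Applying the finite inequality pointwise in the environment, with reversible measure $\deg_G$ on $K_{\eta_k}(\rho)$, gives
\[
\sum_{v\in K_{\eta_k}(\rho)}\deg_G(v)\,\mathbb{E}^{H_k}_v\!\left[\max_{0\le i\le n}d_\omega^G(Z_0,Z_i)^2\right]\ \le\ C^2 n\sum_{v\in K_{\eta_k}(\rho)}\deg_G(v)\,\mathbb{E}^{H_k}_v\!\left[d_\omega^G(Z_0,Z_1)^2\right].
\]
To convert these cluster sums into quantities attached to $\rho$, I would feed this through the mass-transport principle for $(G,\omega,\eta_k,\rho)$, transporting from $\rho$ to each $v$ the mass $\deg_G(\rho)\deg_G(v)\,\mathrm{vol}(K_{\eta_k}(\rho))^{-1}\mathbb{E}^{H_k}_v[\,\cdot\,]$; using $\rho\in K_{\eta_k}(v)\iff v\in K_{\eta_k}(\rho)$ and $\sum_{v\in K_{\eta_k}(\rho)}\deg_G(v)=\mathrm{vol}(K_{\eta_k}(\rho))$, this shows that $\mathbb{E}\big[\deg_G(\rho)\,\mathbb{E}^{H_k}_{\pi_{H_k}}[\,\cdot\,]\big]=\mathbb{E}[\deg_G(\rho)]\,\widehat{\mathbb{E}}\big[\mathbb{E}^{H_k}_\rho[\,\cdot\,]\big]$ — the cluster-volume normalisation in $\pi_{H_k}$ is exactly what makes the transported mass balance. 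Taking expectations above and applying this identity to both sides yields $\widehat{\mathbb{E}}\big[\mathbb{E}^{H_k}_\rho[\max_{0\le i\le n}d_\omega^G(Z_0,Z_i)^2]\big]\le C^2 n\,\widehat{\mathbb{E}}\big[\mathbb{E}^{H_k}_\rho[d_\omega^G(Z_0,Z_1)^2]\big]$, and a short computation — bounding $d_\omega^G(Z_0,Z_1)^2\le\tfrac12(\omega(Z_0)^2+\omega(Z_1)^2)$ along the edge actually traversed and using one more mass transport between a vertex and its neighbours — gives $\widehat{\mathbb{E}}\big[\mathbb{E}^{H_k}_\rho[d_\omega^G(Z_0,Z_1)^2]\big]\le\widehat{\mathbb{E}}[\omega(\rho)^2]$, which completes the proof after adjusting $C$ by a universal constant. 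I expect the main obstacle to be precisely this mass-transport bookkeeping: arranging the reflected boundary so that the reversible measure on $H_k$ is exactly $\deg_G$, and choosing the transported mass so that all cluster-size and cluster-volume factors cancel cleanly; the exhaustion and the $k\to\infty$ limit are routine by comparison.
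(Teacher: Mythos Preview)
The paper does not prove this proposition; it is quoted as \cite[Corollary~2.5]{MR4146545} and used as a black box. Your proposal is a faithful reconstruction of the argument one would expect to find in that reference: reduce to finite planar pieces via a hyperfinite exhaustion, attach self-loops so that the reflected walk on each cluster is reversible with respect to $\deg_G$, apply the Ding--Lee--Peres maximal Markov-type inequality pointwise (mapping into the metric space $(V[G],d_\omega^G)$), and use mass-transport with the $\mathrm{vol}(K_{\eta_k}(\rho))^{-1}$ weight to pass from the cluster-stationary measure to the root under the degree-biased law. The mass-transport identity you write down is correct (both sides equal $\mathbb{E}[\deg_G(\rho)\,\mathbb{E}^{H_k}_\rho[\,\cdot\,]]$), the coupling of $Z^{(k)}$ to $Y$ is fine, and the one-step bound at the end goes through as you indicate; there is nothing further in the paper to compare against.
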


We now apply this proposition to prove Theorem~\ref{thm:displacement}.

\begin{proof}[Proof of Theorem~\ref{thm:displacement}]
	Let $r\geq 1$ be a parameter to be optimized over shortly.
	Seeing as the UST of $\Z^d$ is unimodular, hyperfinite (being a translation-invariant percolation processes on $\Z^d$) and planar, we can apply Proposition \ref{prop:hyper_markov} to the vertex weight
	\[
	\omega_r(v)=\mathbbm{1}(\partial\mathfrak{P}(v,r)\neq\emptyset),
	\]
	which makes $(\fT,\omega_r,0)$ unimodular since it is computed as a translation-equivariant function of $\fT$.
	This particular choice of weight is inspired by that used by Ganguly and Lee in \cite{MR4369717}.
	Writing $d_r=d_{\omega_r}^\fT$ and using the fact that $\fT$ has degrees uniformly bounded below by $1$ and above by $8$, we get that 
	\begin{equation}\label{eq:MarkovT_applied}
		\mathbb{E}\left[\max_{0\leq i\leq n}d_{r}\big(Y_0,Y_i\big)^2\right]\leq 8C^2 n\pr\big(\partial\mathfrak{P}(0,r)\neq\emptyset\big)
	\end{equation}
	for each $r,n \geq 1$.
	We next claim that
		\begin{equation}
			\label{eq:weighted_distance_claim}
			d_\fT(u,v)\leq 4r + 4d_r(u,v) \qquad \text{for every $u,v\in \fT$ and $r\geq 1$.}
		\end{equation}
		Indeed, let $u,v\in \Z^4$ and suppose that $d_\fT(u,v) \geq 4r$, the claimed inequality being trivial otherwise. Let $w$ be the vertex at which the futures of $u$ and $v$ meet. At least one of the inequalities $d_{\mathfrak{T}}(u,w) \geq \frac{1}{2}d_\fT(u,v)$ or $d_{\mathfrak{T}}(v,w) \geq \frac{1}{2}d_\fT(u,v)$
		holds, and we may assume without loss of generality that $d_{\mathfrak{T}}(u,w) \geq \frac{1}{2}d_\fT(u,v) \geq 2r$. Since $u$ belongs to the past of each of the vertices in the $\fT$-geodesic connecting $u$ to $w$, all the vertices in the second half of this geodesic must have past of intrinsic diameter at least $r$, so that $d_r(u,w) \geq \frac{1}{2} d_\fT(u,w)$ and hence that $d_r(u,v) \geq \frac{1}{4}d_\fT(u,v)$ as required.
	It follows from \eqref{eq:weighted_distance_claim} together with \eqref{eq:MarkovT_applied} that
	\[
	\mathbb{E}\left[\max_{0\leq i\leq n}d_{\mathfrak{T}}\big(Y_0,Y_i\big)^2\right]
	\leq 32r^2 + 32\mathbb{E}\left[\max_{0\leq i\leq n}d_{r}\big(Y_0,Y_i\big)^2\right]
	\preceq r^2+ n\pr\big(\partial\mathfrak{P}(0,r)\neq\emptyset\big) \preceq r^2+ \frac{n(\log r)^{1/3}}{r}
	\]
	for every $r,n\geq 1$, where we applied Theorem \ref{theorem:past} in the third inequality, and taking $r=\lceil n^{1/3}(\log n)^{1/9}\rceil$ yields that
	\[
	\mathbb{E}\left[\max_{0\leq i\leq n}d_{\mathfrak{T}}\big(Y_0,Y_i\big)^2\right]\preceq n^{2/3}(\log n)^{2/9}
	\]
	for every $n\geq 2$ as claimed.
\end{proof}
\begin{remark}
	This method also gives sharp upper bounds in dimensions $d\geq 5$: applying \cite[Theorem 1.2]{MR4055195} in place of Theorem~\ref{theorem:past}, it yields that if $d\geq 5$, $\fT$ is the component of the origin in the uniform spanning forest of $\Z^d$, and $Y$ is a random walk on $\fT$ started at $0$, then
	\[
	\mathbb{E}\left[\max_{0\leq i\leq n}d_{\mathfrak{T}}\big(Y_0,Y_i\big)^2\right]\preceq n^{2/3}
	\] 
	for every $n\geq 0$. This is stronger than the displacement upper bounds proven in \cite{MR4055195}, which were based on the results of \cite{BJKS08}.
\end{remark}

\subsection{Proof of Theorem \ref{thm:est_collect}}
\label{subsec:Kumagai_Misumi}

In this section we use all of the previous results to compute logarithmic corrections to the asymptotic behaviour of the displacement, exit times, return probabilities and range of the simple random walk on the uniform spanning tree. We will draw heavily on the methods of \cite{kumagai2008heat}, which generalizes and synthesizes the earlier works \cite{MR2177164,BJKS08,MR2247823}. Note that we must rederive all our results from the methods of \cite{kumagai2008heat} rather than simply quote their results since, as stated, these results do not allow for non-matching upper and lower bounds.


	\begin{remark}
		In this proof we will often use our big-O in probability notation on random variables indexed by more than one variable (e.g. $n$ and $r$). When we write an expression $X_{n,r}=\bO(Y_{n,r})$ of this form, it means that the entire family of associated random variables indexed by both $n$ and $r$ is tight.
	\end{remark}

\begin{proof}[Proof of Theorem~\ref{thm:est_collect}]
	We recall that $\mathbf{E}^{\mathfrak{T}}_x$ denotes expectation with respect to the law of a simple random walk $X$ on $\mathfrak{T}$ started at $x\in \Z^4$ conditional on $\mathfrak{T}$, and write $\mathbf{P}^{\mathfrak{T}}_x$ for the corresponding probability measure. 
	Where clear from context, we will write $\mathbb{P}$ for the joint law and expectation of the uniform spanning tree and a random walk on the tree started at the origin, and similarly will write $\mathbb{E}$ for expectation with respect to this joint law.
	\medskip

	\noindent
	\textbf{Heat-kernel upper bound:} 
	\cite[Proof of Proposition 3.1(a)]{kumagai2008heat}
	implies that
	\[
	p_{2n}^{\fT}(0,0)+p_{2n+1}^{\fT}(0,0)\preceq \frac{1}{\abs{\mathfrak{B}(0,R)}}\vee\frac{R}{n}
	\]
	for every $n,R\geq 1$.
	Taking $R=n^{1/3}(\log n)^{1/9}$ and applying the volume lower bound of Theorem \ref{thm:main_geometric_theorem} therefore yields that
	\begin{equation}
		\label{eq:HK_upper}
		p_{2n}^{\fT}(0,0)=\textbf{O}\left(\frac{(\log n)^{1/9}}{n^{2/3}}\right)
	\end{equation}
	for every $n \geq 2$.
	
	\medskip
	
	\noindent
	\textbf{Intrinsic displacement lower bound:} 
	We have by Cauchy-Schwarz that
	\begin{multline} \label{eq:p_CS}
		\pb^{\fT}_0(d_{\fT}(0,X_n)\leq r) = \sum_{v\in \fB(0,r)}{p^\fT_n}(o,v)\leq{\abs{\fB(0,r)}}^{1/2} \left(\sum_{v\in \mathfrak{B}(R)}{p^\fT_n}(o,v)^2\right)^{1/2}
		\\\preceq {\abs{\fB(0,r)}}^{1/2} p_{2n}^{\fT}(0,0)^{1/2}=\bO\left(\frac{r}{(\log r)^{1/6-o(1)}} \cdot \frac{1}{n^{\frac{1}{3}}} (\log n)^{\frac{1}{18}}\right)
	\end{multline}
	for every $n,r\geq 1$, where we have applied the volume upper bound on Theorem \ref{thm:main_geometric_theorem}, and the previously derived heat-kernel upper bounds. If we take $r = n^{1/3} (\log n)^{1/9-\delta}$ for some $\delta>0$, then the expression appearing inside the $\bO$ is $o(1)$, and, since this holds for every $\delta>0$ (with implicit constants depending on $\delta$), it follows that
	$d_{\fT}(0,X_n) =\bOmega( n^{1/3} (\log n)^{1/9-o(1)})$ for every $n\geq 2$ as claimed.
	
	\medskip

		\noindent \textbf{Intrinsic displacement upper bound:} The estimate
		\[
		d_\fT(X_0,X_n) \leq \max_{0\leq m \leq n} d_\fT(X_0,X_m) = \bO\left(n^{1/3}(\log n)^{1/9}\right)
		\]
		follows immediately from Theorem~\ref{thm:displacement}.
	
	\medskip

	\noindent\textbf{Heat-kernel lower bound: }
	Fix $\delta>0$ and let $R=n^{1/3}(\log n)^{1/9+\delta}$.
	Using the same Cauchy-Schwarz argument as in \eqref{eq:p_CS}, it follows from the intrisic displacement upper bounds of Theorem~\ref{thm:displacement} and the volume lower bounds of Theorem \ref{thm:main_geometric_theorem} that there exists $N_\delta$ such that
	\[
	p_{2n}^{\fT}(o,o) \geq \frac{(1-\pb^{\fT}(d_{\fT}(o,X_n)>R))^2}{\abs{\mathfrak{B}(0,R)}}=\frac{1-\bo(1)}{\mathbf{O}\big(R^2(\log R)^{-1/3}\big)}
		=
		\frac{1-\bo(1)}{\mathbf{O}\big(n^{2/3}(\log n)^{-1/9+2\delta}\big)}
	\]
	for every $n\geq N_\delta$, and the claim follows since $\delta>0$ was arbitrary.
	
	\medskip
	
	\noindent \textbf{Exit time upper bound:}
	\cite[Equation 3.7]{kumagai2008heat} implies that
	\[
	\mathbf{E}^\fT_0[\tau_R]\leq \mathscr{R}_{\mathrm{eff}}(0\leftrightarrow {\mathfrak{B}(0,R)^c;\fT}) \abs{\mathfrak{B}(0,R)}\leq R\abs{\mathfrak{B}(0,R)}
	\]
	for every $R\geq 1$, and applying
	Theorem \ref{thm:main_geometric_theorem} yields that 
	\[
	\mathbf{E}^\fT[\tau_R]=\textbf{O}\left(\frac{R^3}{(\log R)^{1/3-o(1)}}\right) \qquad \text{ and hence that } \qquad \tau_R=\textbf{O}\left(\frac{R^3}{(\log R)^{1/3-o(1)}}\right)
	\]  
	for every $R\geq 2$.
	
	\medskip
	
	\noindent \textbf{Exit time lower bound: }
	Fix $R\geq 1$, and let $\beta>0$, $n= R^3/(\log R)^{1/3}$. Applying Theorem \ref{thm:displacement}, we have 
	\[
	\pr(\tau_R\leq \beta n)=\pr\left(\max_{0\leq i\leq \beta n} d_{\mathfrak{T}}(o,X_i)^2\geq R^2\right)=O\left( \frac{\beta^{2/3}n^{2/3}(\log n)^{2/9}}{R^2}\right)=O(\beta^{2/3}),
	\] and so $\tau_R=\mathbf{\Omega}(R^3/(\log R)^{1/3})$. The relation $\bE^{\fT}[\tau_R]=\mathbf{\Omega}(R^3/(\log R)^{1/3})$ then follows. 
	
	\medskip

	\noindent \textbf{Extrinsic displacement upper bound:} Let $R\geq 1$ and fix $\delta>0$. We have already established that 
	\[\max_{0\leq m \leq n} d_\fT(X_0,X_m) = \bO\left(n^{1/3}(\log n)^{1/9}\right),\]
	and Theorem \ref{thm:main_geometric_theorem} tells us that
	\[
	\mathfrak{B}(n)\subseteq \Lambda\big(n^{1/2}(\log n)^{1/6+\mathbf{o}(1)}\big) \qquad \text{as $n\to\infty$}.
	\]
	Combining these two facts gives us 
	\[\max_{0\leq m \leq n} \norm{X_m}_\infty =\mathbf{O}\bigl(n^{\frac{1}{6}}(\log n)^{\frac29+ o(1)} \bigr) \qquad \text{as $n\to\infty$,}
	\]
	as required.

		\medskip
		
	\noindent \textbf{Extrinsic displacement lower bound:}
 Let $R\geq 1$. Exploiting the tree structure of $\fT$, we note that if $\max_{m\leq n}\norm{X_m}_\infty\leq R$, then $\Gamma(0,X_n)\subseteq \Lambda(R)$. Thus, arguing as in \eqref{eq:p_CS}, we have that
	\begin{align*}
		\pb^{\fT}\big(\max_{m\leq n} \norm{X_m}_\infty\leq R\big) &\preceq \abs{\{x\in\Z^d:\Gamma(0,x)\subseteq \Lambda( R)\}}^{1/2} p_{2n}^{\fT}(o,o)^{1/2}\\
		&=\textbf{O}\left(\frac{R^2}{(\log R)^{1/2}}\cdot \frac{(\log n)^{1/18}}{n^{1/3}}\right),
	\end{align*}
	where the we have applied Proposition \ref{prop:extr_volume} and heat kernel upper bound \eqref{eq:HK_upper} in the last line. This implies that $\max_{m\leq n} \norm{X_m}_\infty  = \bOmega(n^{1/6}(\log n)^{2/9})$ as claimed.
	
	\medskip
	
	\medskip
	\noindent \textbf{Range upper bound: }
	Fix $\delta>0$. For $n\geq 1$, let $D_n = \max_{0\leq i\leq n} d_{\mathfrak{T}}(0,X_i)$. Applying displacement upper bounds and the volume upper bounds of Theorem \ref{thm:main_geometric_theorem}, we have that
	\[
	\abs{\{X_m:0\leq m \leq n\}}\leq \left
	\vert\mathfrak{B}(D_n)\right\vert=\left\vert\mathfrak{B}(\mathbf{O}(n^{1/3}(\log n)^{1/9}))\right\vert=\mathbf{O}\left(\frac{n^{2/3}}{(\log n)^{1/9-o(1)}}\right)
	\]
	as $n\to\infty$ as required.

	\medskip
	
	\noindent \textbf{Range lower bound: }
	Fix $R\geq 1$, $\delta>0$ and write $\mathfrak{B}=\mathfrak{B}(R)$. Let $g_R(x,y)=(\deg_\fT y)^{-1} \mathbf{E}_x^{\mathfrak{T}}[{\sum_{0\leq i \leq \tau_R} \mathbbm{1}(X_n=y)}]$ and let $p(y)=g_R(0,y)/g_R(y,y)$ be the probability that a random walk started at $0\in\mathfrak{T}$ hits $y$ before exiting $\mathfrak{B}$. For each $y\in\mathfrak{B}^\prime:=\mathfrak{B}(\lfloor R/(\log R)^\delta \rfloor)$, we have $\mathscr{R}_{\mathrm{eff}}(0\leftrightarrow y;\mathfrak{T})\leq R/(\log R)^\delta$, so that if the event  $A=\{\mathscr{R}_{\mathrm{eff}}(0\leftrightarrow \mathfrak{B}^c;\mathfrak{T})\geq R/(\log R)^{\delta/2}\}$ holds then
	\begin{multline*}
	\inf_{y\in\mathfrak{B}^\prime}\mathscr{R}_{\mathrm{eff}}(y\leftrightarrow \mathfrak{B}^c;\mathfrak{T})\geq \inf_{y\in\mathfrak{B}^\prime} \big[\mathscr{R}_{\mathrm{eff}}(0\leftrightarrow \mathfrak{B}^c;\mathfrak{T})-\mathscr{R}_{\mathrm{eff}}(0\leftrightarrow y;\mathfrak{T})\big]\\ 
	\geq R/(\log R)^{\delta/2}-R/(\log R)^\delta=\Omega(R/(\log R)^{\delta/2}).
	\end{multline*}
	Now for each $y\in\mathfrak{B}$ we have the following inequality which was derived for general graphs in \cite[Proof of Proposition 3.2(b)]{kumagai2008heat}:
	\[
	\abs{1-p(y)}^2\leq \mathscr{R}_{\mathrm{eff}}(0\leftrightarrow y;\mathfrak{T})\mathscr{R}_{\mathrm{eff}}(y\leftrightarrow \mathfrak{B}^c;\mathfrak{T})^{-1}.
	\]
	Taking the supremum over $y\in\mathfrak{B}^\prime\subset\mathfrak{B}$ yields
	\begin{equation*}
		\sup_{y\in\mathfrak{B}^\prime}\abs{1-p(y)}^2\leq \frac{R}{(\log R)^\delta}\cdot	\sup_{y\in\mathfrak{B}^\prime}\mathscr{R}_{\mathrm{eff}}(y\leftrightarrow \mathfrak{B}^c;\mathfrak{T})^{-1}=O((\log R)^{-\delta/2})
	\end{equation*}
	on the event $A$.
  For each $R\geq 1$, consider the random variable $U_R=\abs{\{X_i:0\leq i \leq \tau_R\}\cap \mathfrak{B}^\prime}$. Then
	\begin{equation}\label{eq:lower_TR}
		\mathbf{E}_0^{\mathfrak{T}}{[U_R]}\geq\mathbf{E}^{\mathfrak{T}}_0\Big[\sum_{x\in\mathfrak{B}^\prime}\mathbbm{1}(X\text{ hits $x$ before exiting $\mathfrak{B}$})\Big]=\sum_{y\in\mathfrak{B}^\prime}p(y)\geq \mathbbm{1}(A)(1-O((\log R)^{-\delta/4}))\abs{\mathfrak{B}^\prime}.
	\end{equation}
	Now
	\[
	\pr\left(\frac{U_R}{\abs{\mathfrak{B}^\prime}}\leq 1/2\right)\leq\mathbb{E}\left[\pb^{\mathfrak{T}}\left(A,\frac{U_R}{\abs{\mathfrak{B}^\prime}}\leq 1/2\right)\right]+\pr(A^c)=\mathbb{E}\left[\pb^{\mathfrak{T}}\left(\mathbbm{1}(A)\Big(1-\frac{U_R}{\abs{\mathfrak{B}^\prime}}\Big)\geq 1/2\right)\right]+\pr(A^c),
	\]
	and so applying \eqref{eq:lower_TR} with Markov's inequality to the conditional probability inside the expectation gives
	\[
	\pr\left(\frac{U_R}{\abs{\mathfrak{B}^\prime}}\leq 1/2\right)\leq O((\log R)^{-\delta/4})\pr(A)+ \pr(A^c)=o(1)
	\]
	as $R\rightarrow\infty$, where the fact that $\pr(A^c)\to 0$ as $R\to \infty$ follows from Corollary \ref{cor:eff_cond}. 
  The claim follows since $|\fB'|=\bOmega(R^2(\log R)^{-1/3-2\delta})$, $\tau_R=\bO(R^3(\log R)^{-1/3+o(1)})$, and $\delta>0$ was arbitrary.
\end{proof}

\subsection*{Acknowledgements}
We thank Sebastian Andres for helpful discussions. TH also thanks Ben Golub for references on the `big-O and little-o in probability' notation.

\addcontentsline{toc}{section}{References}

 \setstretch{1}
 \footnotesize{
  \bibliographystyle{abbrv}
  \bibliography{unimodularthesis.bib}
  }

\medskip

\newcommand{\email}[1]{\href{mailto:#1}{#1}}

\noindent \sc{N.\ Halberstam: CCIMI, University of Cambridge,} \email{nh448@cam.ac.uk}\\
\noindent \sc{T.\ Hutchcroft: PMA, Caltech} \email{t.hutchcroft@caltech.edu} 

\end{document}